\theoremstyle{plain}
\newtheorem{theorem}{Theorem}[section]
\newtheorem{corollary}[theorem]{Corollary}
\newtheorem{lemma}[theorem]{Lemma}
\newtheorem{proposition}[theorem]{Proposition}
\numberwithin{equation}{section}
\theoremstyle{definition}
\theoremstyle{remark}
\newtheorem{remark}[theorem]{Remark}
\newtheorem{assumption}[theorem]{Assumption}
\setlist[itemize]{leftmargin=.5in}
\setlist[enumerate]{leftmargin=.5in,topsep=3pt,itemsep=3pt,label=(\roman*)}
\newcommand*\samethanks[1][\value{footnote}]{\footnotemark[#1]}
\newcommand{\email}[1]{\href{#1}{#1}}
\newcommand{\TheTitle}{Eigenfunction martingale estimating functions and filtered data for drift estimation of discretely observed multiscale diffusions}
\newcommand{\TheAuthors}{A. Abdulle, G. A. Pavliotis, A. Zanoni}
\title{\TheTitle}
\author{Assyr Abdulle \thanks{ANMC, Institute of Mathematics, École Polytechnique Fédérale de Lausanne, 1015 Lausanne, Switzerland, \email{assyr.abdulle@epfl.ch}, \email{andrea.zanoni@epfl.ch}}
		\and Grigorios A. Pavliotis \thanks{Department of Mathematics, Imperial College London, London SW7 2AZ, UK, \email{g.pavliotis@imperial.ac.uk}}
		\and Andrea Zanoni \samethanks[1]
}
\date{}
\newcommand{\abs}[1]{\left\lvert#1\right\rvert}
\newcommand{\norm}[1]{\left\|#1\right\|}
\newcommand{\N}{\mathbb{N}}
\newcommand{\Z}{\mathbb{Z}}
\newcommand{\R}{\mathbb{R}}
\newcommand{\epl}{\varepsilon}
\newcommand{\diffL}{\mathcal{L}}
\newcommand{\defeq}{\coloneqq}
\newcommand{\eqdef}{\eqqcolon}
\newcommand{\E}{\operatorname{\mathbb{E}}}
\renewcommand{\d}{\mathrm{d}}
\newcommand{\dd}{\,\mathrm{d}}
\definecolor{shade}{RGB}{100, 100, 100}
\definecolor{bordeaux}{RGB}{128, 0, 50}
\renewcommand*{\dot}[1]{\accentset{\mbox{\large\bfseries .}}{#1}}
\definecolor{leg1}{RGB}{0,114,189}
\definecolor{leg2}{RGB}{217,83,25}
\definecolor{leg3}{RGB}{237,177,32}
\definecolor{leg4}{RGB}{126,47,142}
\definecolor{leg5}{RGB}{119,172,48}
\definecolor{leg21}{RGB}{62,38,169}
\definecolor{leg22}{RGB}{46,135,247}
\definecolor{leg23}{RGB}{55,200,151}
\definecolor{leg24}{RGB}{254,195,56}
\begin{document}
	
\maketitle	

\vspace{-0.5cm}
\begin{center}
\it Dedicated to the memory of Assyr Abdulle
\end{center}

\begin{abstract} 
We propose a novel method for drift estimation of multiscale diffusion processes when a sequence of discrete observations is given. For the Langevin dynamics in a two-scale potential, our approach relies on the eigenvalues and the eigenfunctions of the homogenized dynamics. Our first estimator is derived from a martingale estimating function of the generator of the homogenized diffusion process. However, the unbiasedness of the estimator depends on the rate with which the observations are sampled. We therefore introduce a second estimator which relies also on filtering the data and we prove that it is asymptotically unbiased independently of the sampling rate. A series of numerical experiments illustrate the reliability and efficiency of our different estimators.
\end{abstract}

\textbf{AMS subject classifications.} 62F15, 65C30, 62M05, 74Q10, 35B27, 60J60, 76M50.

\textbf{Key words.} Langevin dynamics, diffusion process, homogenization, parameter estimation, discrete observations, eigenvalue problem, filtering, martingale estimators.

\section{Introduction}

Learning models from data is a problem of fundamental importance in modern applied mathematics. The abundance of data in many application areas such as molecular dynamics, atmosphere/ocean science make it possible to develop physics-informed data driven methodologies for deriving models from data \cite{RPK19, YMK21, ZLG19}. Naturally, most problems of interest are characterised by a very high dimensional state space and by the presence of many characteristic length and time scales. When it is possible to decompose the state space into the resolved and unresolved degrees of freedom, then one is usually interested in the derivation of a model for the resolved degrees of freedom, while treating the unresolved scales as noise. Clearly, these reduced models are stochastic, often described by stochastic differential equations (SDEs). The goal of this paper is to derive rigorous and systematic methodologies for learning coarse-grained models that accurately describe the dynamics at macroscopic length and time scales from noisy observations of the full, unresolved dynamics. We apply the proposed methodologies to simple models of fast/slow SDEs for which the theory of homogenization exists, that enables us to study the inference problem in a rigorous and systematic manner.

In many applications the available data are noisy, not equidistant and certainly not compatible with the coarse-grained model. The presence of observation noise and of the model-data mismatch renders the problem of learning macroscopic models from microscopic data highly ill-posed. Several examples from econometrics (market microstructure noise) \cite{AMZ06} and molecular dynamics show that standard algorithms, e.g., maximum likelihood or quadratic variation for the diffusion coefficient, are \emph{asymptotically biased} and they fail to estimate correctly the parameters in the coarse-grained model. In a series of earlier works this problem was studied using maximum likelihood techniques with subsampled data \cite{PaS07, PPS09}, methodologies based on the method of moments \cite{KPK13, KKP15, KPP15}, quadratic programming approaches \cite{CrV06} as well as Bayesian approaches \cite{AbD20, AGZ20}. We also mention the pioneering work on estimating the integrated stochastic volatility in the presence of market microstructure noise \cite{AMZ06, ZMP05}. In particular, in \cite{AiJ14} the authors analyse the correct interplay between the intensity of the microstructure noise and the optimal rates of convergence.

The main observation in \cite{PaS07, PPS09} is that when the maximum likelihood estimator (MLE) of the fast/slow system is evaluated at the full data, then the MLE becomes asymptotically biased; in fact, the original data are not compatible with the homogenized equation, and therefore data need to be preprocessed, for instance under the form of subsampling. On the other hand, when the MLE is evaluated at appropriately subsampled data, then it becomes asymptotically unbiased. Although this is an interesting theoretical observation (see also later developments in \cite{SpC13}), it does not lead to an efficient algorithm. The reason for this is that the performance of the estimator depends very sensitively on the choice of the sampling rate. In addition, the optimal sampling rate is not known and is strongly dependent on the problem under investigation. Furthermore, subsampling naturally leads to an increase in the variance, unless appropriate variance reduction methodologies are used.

In a recent work \cite{AGP20} we addressed the problem of lack of robustness of the MLE with subsampling algorithm by introducing an appropriate filtering methodology that leads to a stable and robust algorithm. In particular, rather than subsampling the original trajectory, we smoothed the data by applying an appropriate linear time-invariant filter from the exponential family and we modified the MLE by inserting the new filtered data. This new estimator was thus independent of the subsampling rate and also asymptotically unbiased and robust with respect to the parameters of the filter.

However, the assumption that the full path of the solution is observed is not realistic in most applications. In fact, in all real problems one can only obtain discrete measurements of the diffusion process. Hence, in this paper we focus on the problem of learning the coarse-grained homogenized model assuming that we are given discrete observations from the microscopic model. In this paper we use the martingale estimating functions that were introduced in \cite{BiS95}, where the authors study drift estimation for discrete observations of one-scale processes and show that estimators based on the discretized continuous-version likelihood function can be strongly biased. They therefore propose martingale estimating functions obtained by adjusting the discretized continuous-version score function by its compensator which leads to unbiased estimators. Moreover, in \cite{KeS99} a different type of martingale estimating function, which is dependent on the eigenvalues and eigenfunctions of the generator of the stochastic process, is introduced and asymptotic unbiasedness and normality are proved. Furthermore, another inference methodology that uses spectral information is proposed in \cite{CrV06a}. Their approach consists of inferring the drift and diffusion functions of a diffusion process by minimizing an objective function which measures how close the generator is to having a reference spectrum which is obtained from the time series through the construction of a discrete-time Markov chain. This idea has been further expanded in several directions in \cite{CrV11}.

In this paper, we propose a new estimator for learning homogenised SDEs from noisy discrete data that is based on the martingale estimators that were introduced in \cite{KeS99}. The main idea is to consider the eigenvalues and eigenfunctions of the generator of the homogenized process. This new estimator is asymptotically unbiased only if the distance between two consecutive observations is not too small compared with the multiscale parameter describing the fastest scale, i.e., if data are compatible with the homogenized model. Therefore, in order to obtain unbiased approximations independently of the sampling rate with which the observations are obtained, we propose a second estimator which, in addition to the original observations, relies also on filtered data obtained following the filtering methodology presented in \cite{AGP20}. We observe that smoothing the original data makes observations compatible with the homogenized process independently of the rate with which they are sampled and hence this second estimator gives a black-box tool for parameter estimation.

\subsection{Our main contributions}

The main goal of this paper is to propose new algorithms based on martingale estimating functions and filtered data for which we can prove rigorously that they are asymptotically unbiased and not sensitive with respect to e.g. the sampling rate and the observation error. In particular, we combine two main ideas:
\begin{itemize}
\item the use of martingale estimating functions for discretely observed diffusion processes based on the eigenvalues and the eigenfunctions of the generator of the homogenized process, which was originally presented for one-scale problems in \cite{KeS99};
\item the filtering methodology for smoothing the data in order to make them compatible with the homogenized model, which was introduced in \cite{AGP20}.
\end{itemize} 
We prove theoretically and observe numerically that the estimator without filtered data is asymptotically unbiased if:
\begin{itemize}
\item the observations are taken at the homogenized regime, i.e., the sampling rate is independent of the parameter measuring scale separation;
\item the observations are taken at the multiscale regime, i.e., the sampling rate is dependent on the fastest scale, and the sampling rate is bigger than the multiscale parameter.
\end{itemize}
Moreover, we show that the estimator with filtered data corrects the bias caused by a sampling rate smaller than the multiscale parameter and therefore it is asymptotically unbiased independently of the sampling rate.

\paragraph{Outline.} The rest of the paper is organized as follows. In Section 2 we present the Langevin dynamics and its corresponding homogenized equation and we introduce the two proposed estimators based on eigenvalues and eigenfunctions of the generator with and without filtered data. In Section 3 we present the main results of this work, i.e., the asymptotic unbiasedness of the two estimators, and in Section 4 we perform numerical experiments which validate the efficacy of our methods. Section 5 is devoted to the proof of the main results which are presented in Section 3. Finally, in the Appendix we show some technical results which are employed in the analysis and we explain some details about the implementation of the proposed methodology.

\section{Problem setting}

In this work we study the following class of multiscale diffusion processes. Consider the following two-scale SDE, observed over the time interval $[0,T]$
\begin{equation} \label{eq:SDE_MS}
\d X_t^\epl = -\alpha \cdot V'(X_t^\epl) \dd t - \frac1\epl p'\left(\frac{X_t^\epl}\epl\right) \dd t + \sqrt{2\sigma} \dd W_t,
\end{equation}
where $\epl>0$ describes the fast scale, $\alpha\in\R^M$ and $\sigma>0$ are respectively the drift and diffusion coefficients and $W_t$ is a standard one-dimensional Brownian motion. The functions $V\colon\R\to\R^M$ and $p\colon\R\to\R$ are the slow-scale and fast-scale parts of the potential and they are assumed to be smooth. Moreover, we also assume $p$ to be periodic with period $L>0$. We remark that our setting can be considered as a semi-parametric framework similar to the one of \cite{KPK13}. The components of the potential function $V$, in fact, can be viewed as basis functions for a truncated expansion (e.g., Taylor series or Fourier expansion) of the unknown slow-scale potential $V(\cdot;\alpha) \colon \R \to \R$, where the components of the unknown drift term $\alpha$ contain the generalized Fourier coefficients, i.e., 
\begin{equation}
V(x;\alpha) = \sum_{m=1}^M \alpha_m V_m(x).
\end{equation}
We also mention that assuming a parametric form for the potential $V$ is a technique usually employed in the statistics literature in order to regularize the likelihood function and obtain a parametric approximation of the actual MLE of $V$, which does not exist in general \cite{PSV09}.
\begin{remark}
For clarity of the presentation, we focus our analysis on scalar multiscale diffusions with a finite number of parameters in the drift that have to be learned from data. Nevertheless, we remark that all the following theory can be generalized to the case of multidimensional diffusion processes in $\R^d$, for which we provide further details in Appendix \ref{app:multidimensional} and an example in Section \ref{sec:IP}. However, the problem becomes more complex and computationally expensive from a numerical viewpoint and it can be prohibitive if the dimension $d$ is too large, since the methodology proposed in this paper requires the solution of the eigenvalue problem for the generator of a $d$-dimensional diffusion process.
\end{remark}
The theory of homogenization (see e.g. \cite[Chapter 3]{BLP78} or \cite[Chapter 18]{PaS08}) guarantees the existence of the following homogenized SDE whose solution $X_t^0$ is the limit in law of the solutions $X_t^\epl$ of \eqref{eq:SDE_MS} as random variables in $\mathcal C^0([0,T]; \R)$
\begin{equation} \label{eq:SDE_H}
\d X_t^0 = -A \cdot V'(X_t^0) \dd t + \sqrt{2\Sigma} \dd W_t,
\end{equation} 
where $A=K\alpha$, $\Sigma=K\sigma$. The coefficient $0<K<1$ has the explicit formula
\begin{equation} \label{eq:K_H}
K = \int_0^L (1 + \Phi'(y))^2 \, \mu(\d y) = \int_0^L (1 + \Phi'(y)) \, \mu(\d y),
\end{equation}
with 
\begin{equation} \label{eq:def_mu}
\mu(\d y) = \frac{1}{C_\sigma} e^{-p(y)/\sigma} \dd y, \quad\text{where} \quad C_\sigma = \int_0^L e^{-p(y)/\sigma} \dd y,
\end{equation}
and where the function $\Phi$ is the unique solution with zero-mean with respect to the measure $\mu$ of the differential equation
\begin{equation} \label{eq:cell_problem}
-p'(y)\Phi'(y) + \sigma \Phi''(y) = p'(y), \quad 0 \leq y \leq L,
\end{equation}
endowed with periodic boundary conditions. In particular, for one-dimensional diffusion processes we have
\begin{equation}
\Phi'(y) = \frac{L}{\widehat C_\sigma} e^{p(y)/\sigma} - 1, \quad \text{where} \quad \widehat C_\sigma = \int_0^L e^{p(y)/\sigma} \dd y,
\end{equation}
which implies 
\begin{equation}
K = \frac{L^2}{C_\sigma \widehat C_\sigma}.
\end{equation}
Our goal is to derive estimators for the homogenized drift coefficient $A$ based on multiscale data originating from \eqref{eq:SDE_MS}. In this work we consider the same setting as \cite{AGP20}, which is summarized by the following assumption.
\begin{assumption} \label{ass:dissipative_setting}
The potentials $p$ and $V$ satisfy
\begin{enumerate}
\item $p \in \mathcal C^\infty(\R) \cap L^\infty(\R)$ and is $L$-periodic for some $L > 0$,
\item\label{ass:regularity_diss} $V \in \mathcal C^\infty(\R;\R^M)$ and each component is polynomially bounded from above and bounded from below, and there exist $b_1,b_2 > 0$ such that
\begin{equation}
- b_1 + b_2 x^2 \le \alpha \cdot V'(x) x,
\end{equation}
\item $V'$ is Lipschitz continuous, i.e. there exists a constant $C > 0$ such that
\begin{equation}
\norm{V'(x) - V'(y)} \leq C\abs{x - y}.
\end{equation} 
\end{enumerate}
\end{assumption}
Let us remark that, under Assumption \ref{ass:dissipative_setting}, it has been proved in \cite{PaS07} that both processes \eqref{eq:SDE_MS} and \eqref{eq:SDE_H} are geometrically ergodic and their invariant measure has a density with respect to the Lebesgue measure. In particular, let us denote by $\varphi^\epl$ and $\varphi^0$ the densities of the invariant measures of $X_t^\epl$ and $X_t^0$, respectively defined by
\begin{equation} \label{eq:invariant_measure_Xe}
\varphi^\epl(x) = \frac{1}{C_{\varphi^\epl}} \exp \left( -\frac1\sigma \alpha \cdot V(x) - \frac1\sigma p\left( \frac x\epl \right) \right), \quad\text{where} \quad C_{\varphi^\epl} = \int_\R \exp \left( -\frac1\sigma \alpha \cdot V(x) - \frac1\sigma p\left( \frac x\epl \right) \right) \dd x,
\end{equation}
and
\begin{equation} \label{eq:invariant_measure_X0}
\varphi^0(x) = \frac{1}{C_{\varphi^0}} \exp \left( -\frac1\Sigma A \cdot V(x) \right), \quad\text{where} \quad C_{\varphi^0} = \int_\R \exp \left( -\frac1\Sigma A \cdot V(x) \right) \dd x.
\end{equation}
\begin{remark} \label{rem:initial_condition}
The value of the initial condition $X_0^\epl$ in the SDE \eqref{eq:SDE_MS} is important neither for the numerical experiments nor for the following analysis and can be chosen arbitrarily. In fact, the process $X_t^\epl$ is geometrically ergodic and therefore it converges to its invariant distribution with density $\varphi^\epl$ exponentially fast for any initial condition.
\end{remark}
\paragraph{Drift estimation problem.} Consider $N+1$ uniformly distributed observation times $0 = t_0 < t_1 < t_2 < \dots, < t_N = T$, set $\Delta = t_n - t_{n-1}$ and let $(X_t^\epl)_{t\in[0,T]}$ be a realization of the solution of \eqref{eq:SDE_MS}. We then assume to know a sample $\{ \widetilde X_n^\epl\}_{n=0}^N$ of the realization where $\widetilde X_n^\epl = X_{t_n}^\epl$ and we aim to estimate the drift coefficient $A$ of the homogenized equation \eqref{eq:SDE_H}. First, since we deal with discrete observations of stochastic processes, we employ martingale estimating functions based on eigenfunctions, which have already been studied for problems without a martingale structure in \cite{KeS99}. Second, by observing that if the time-step $\Delta$ is too small with respect to the multiscale parameter $\epl$, then the data could be compatible with the full dynamics rather than with the coarse-grained model, we also adopt the filtering methodology presented in \cite{AGP20}, which has been proved to be beneficial for correcting the behavior of the maximum likelihood estimator (MLE) in the setting of continuous observations.

\subsection{Martingale estimating functions based on eigenfunctions}

We first remark that a general theory for martingale estimating functions exists and is thoroughly outlined in \cite{BiS95}. They appear to be appropriate for multiscale problems due to their robustness properties. In this paper we develop martingale estimating functions based on the eigenfunctions of the generator of the process, since the theory of the eigenvalue problem for elliptic differential operators and the multiscale analysis of this eigenvalue problem are well developed. Let $\mathcal A \subset \R^M$ be the set of admissible drift coefficients for which Assumption \ref{ass:dissipative_setting}\ref{ass:regularity_diss} is satisfied. To describe our methodology we consider the solution $X_t(a)$ of the homogenized process \eqref{eq:SDE_H} with a generic parameter $a\in\mathcal A$ instead of the exact drift coefficient $A$:
\begin{equation} \label{eq:SDE_H_a}
\d X_t(a) = -a \cdot V'(X_t(a)) \dd t + \sqrt{2\Sigma} \dd W_t,
\end{equation} 
which, according to \eqref{eq:invariant_measure_X0}, has invariant measure 
\begin{equation} \label{eq:phia_def}
\varphi_a(x) = \frac{1}{C_{\varphi_a}} \exp \left( -\frac1\Sigma a \cdot V(x) \right), \quad\text{where} \quad C_{\varphi_a} = \int_\R \exp \left( -\frac1\Sigma a \cdot V(x) \right) \dd x.
\end{equation}
The generator $\mathcal L_a$ of \eqref{eq:SDE_H_a} is defined for all $u \in C^2(\R)$ as
\begin{equation} \label{eq:generator}
\mathcal L_a u(x) = - a \cdot V'(x) u'(x) + \Sigma u''(x),
\end{equation}
where the subscript denotes the dependence of the generator on the unknown drift coefficient $a$. From the well-known spectral theory of diffusion processes and under our assumptions on the potential $V$ we deduce that $\mathcal L_a$ has a countable set of eigenvalues (see e.g. \cite{HST98}). In particular, let $\{(\lambda_j(a),\phi_j(\cdot;a))\}_{j=0}^\infty$ be the sequence of eigenvalue-eigenfunction couples of the generator which solve the eigenvalue problem
\begin{equation} \label{eq:eigen_problem_general}
\mathcal L_a \phi_j(x;a) = -\lambda_j(a) \phi_j(x;a),
\end{equation}
which, due to \eqref{eq:generator}, is equivalent to
\begin{equation} \label{eq:eigen_problem_equation}
\Sigma \phi_j''(x;a) - a \cdot V'(x) \phi_j'(x;a) + \lambda_j(a) \phi_j(x;a) = 0,
\end{equation}
and where the eigenvalues satisfy $0=\lambda_0(a)<\lambda_1(a)<\dots<\lambda_j(a)\uparrow\infty$ and the eigenfunctions form an orthonormal basis for the weighted space $L^2(\varphi_a^0)$. We mention in passing that, by making a unitary transformation, the eigenvalue problem for the generator of the Langevin dynamics can be transformed to the standard Sturm-Liouville problem for Schrödinger operators \cite[Chapter 4]{Pav14}. We now state a formula, which has been proved in \cite{KeS99} and will be fundamental in the rest of the paper
\begin{equation} \label{eq:martingale_formula}
\E \left[ \phi_j(X_{t_n}(a);a) | X_{t_{n-1}}(a) = x \right] = e^{-\lambda_j(a)\Delta} \phi_j(x;a),
\end{equation}
where $\Delta = t_n - t_{n-1}$ is the constant distance between two consecutive observations. We now discuss how this eigenvalue problem can be used for parameter estimation. Let $J$ be a positive integer and let $\{ \beta_j(\cdot;a) \}_{j=1}^J$ be $J$ arbitrary functions $\beta_j(\cdot;a) \colon \R \to \R^M$ possibly dependent on the parameter $a$, which satisfy Assumption \ref{ass:beta_functions}(i)(ii) stated below, and define for $x,y,z \in \R$ the martingale estimating function
\begin{equation} \label{eq:def_g}
g_j(x,y,z;a) = \beta_j(z;a) \left( \phi_j(y;a) - e^{-\lambda_j(a)\Delta} \phi_j(x;a) \right).
\end{equation}
Then, given a set of observations $\{ \widetilde X_n^\epl \}_{n=0}^N$, we consider the score function $\widehat G_{N,J}^\epl \colon \mathcal A \to \R^M$ defined by
\begin{equation} \label{eq:score_function_NOfilter}
\widehat G_{N,J}^\epl(a) = \frac1\Delta \sum_{n=0}^{N-1} \sum_{j=1}^J g_j(\widetilde X_n^\epl, \widetilde X_{n+1}^\epl, \widetilde X_n^\epl; a).
\end{equation}
This function can be seen as an approximation in terms of eigenfunctions of the true score function, i.e., the gradient of the log-likelihood function with respect to the unknown parameter. The full derivation of a martingale estimating function as an approximation of the true score function is given in detail in \cite[Section 2]{BiS95}. The first step is a discretization of the gradient of the continuous-time log-likelihood, which yields a biased estimating function. Hence, the next step is adjusting this function by adding its compensator in order to obtain a zero-mean martingale. Moreover, by using the eigenfunctions of the generator, it is shown in~\cite{KeS99} that this approach is suitable for scalar diffusion processes with no multiscale structure, i.e., processes with a single characteristic length/time scale. In fact, by a classical result for ergodic diffusion processes \cite[Section 4.7]{Pav14}, any function in the $L^2$ space weighted by the invariant measure can be written as an infinite linear combination of the eigenfunctions of the generator of the diffusion process.

\begin{remark}
In the construction of the martingale estimating function $\widehat G^\epl_{N,J}(a)$ we omitted the first index $j = 0$ because, for ergodic diffusion processes, the first eigenvalue is zero, $\lambda_0(a) = 0$, and its corresponding eigenfunction is constant, $\phi_0(a) = 1$, and hence they would give $g_0(x,y,z;a) = 0$ independently of the function $\beta_0(z;a)$. Therefore, it would not provide us with any information about the unknown parameters in the drift.
\end{remark}

\paragraph{The estimator $\widehat A^\epl_{N,J}$.} The first estimator we propose for the homogenized drift coefficient $A$ is given by the solution $\widehat A^\epl_{N,J}$ of the $M$-dimensional nonlinear system
\begin{equation} \label{eq:system2solve}
\widehat G^\epl_{N,J}(a) = 0.
\end{equation}
An intuition on why $\widehat G^\epl_{N,J}$ is a good score function is given by the following result. Let $\widehat G^0_{N,J}$ be the score function where the observations of the slow variable of the multiscale process are replaced by the homogenized ones, then due to equation \eqref{eq:martingale_formula}
\begin{equation}
\E \left[ \widehat G^0_{N,J}(A) \right] = 0,
\end{equation}
which means that the zero of the expectation of the score function with homogenized observations is exactly the drift coefficient of the effective equation. In Algorithm \ref{alg:NOfilter} we summarize the main steps for computing the estimator $\widehat A^\epl_{N,J}$ and further details about the implementation can be found in Appendix \ref{app:implementation}. We finally introduce the following technical assumption which will be employed in the analysis.
\begin{assumption} \label{ass:beta_functions}
The following hold for all $a\in\mathcal A$ and for all $j=1,\dots,J$:
\begin{enumerate}
\item $\beta_j(z;a)$ is continuously differentiable with respect to $a$ for all $z\in\R$;
\item all components of $\beta_j(\cdot;a)$, $\beta_j'(\cdot;a)$, $\dot \beta_j(\cdot;a)$, $\dot \beta_j'(\cdot;a)$ are polynomially bounded;
\item the slow-scale potential $V$ is such that $\phi_j(\cdot;a)$, $\phi_j'(\cdot;a)$, $\phi_j''(\cdot;a)$, and all components of $\dot \phi_j(\cdot;a)$, $\dot \phi_j'(\cdot;a)$, $\dot \phi_j''(\cdot;a)$  are polynomially bounded;
\end{enumerate}
where the dot denotes either the Jacobian matrix or the gradient with respect to $a$.
\end{assumption}
\begin{remark}
In \cite{KeS99} the authors propose a method to choose the functions $\{ \beta_j(\cdot;a) \}_{j=1}^J$ in order to obtain optimality in the sense of \cite{GoH87}: this optimal set of functions can be seen as the projection of the score function onto the set of martingale estimating functions obtained by varying the function $\{ \beta_j(\cdot;a) \}_{j=1}^J$. For the class of diffusion processes for which the eigenfunctions are polynomials, the optimal estimating functions can be computed analytically. In fact, they are related to the moments of the transition density, which can be computed explicitly. Moreover, another procedure is to choose functions which depend only on the unknown parameter and which minimize the asymptotic variance. This approach is strongly related to the asymptotic optimality criterion considered by \cite{HeG89}. For further details on how to choose these functions we refer to \cite{KeS99}, and we remark that their calculation requires additional computational cost. Nevertheless, the theory we develop is valid for all functions which satisfy Assumptions \ref{ass:beta_functions}(i) and \ref{ass:beta_functions}(ii) and we observed in practice that choosing simple functions independent of the unknown parameter, e.g. monomials of the form $\beta_j(z;a) = z^k$ with $k \in \N$, is sufficient to obtain satisfactory estimations. We also remark that in one dimension we can characterize completely all diffusion processes whose generator has orthogonal polynomials as eigenfunctions \cite[Section 2.7]{BGL14}.  Partial results in this directions also exist in higher dimensions.
\end{remark}

\begin{algorithm}
\caption{Estimation of $A$ without filtered data} \label{alg:NOfilter}
\begin{tabbing}
\hspace*{\algorithmicindent} \textbf{Input:} \= Observations $\left\{ \widetilde X_n^\epl \right\}_{n=0}^N$. \\
\> Distance between two consecutive observations $\Delta$. \\
\> Number of eigenvalues and eigenfunctions $J$. \\
\> Functions $\left\{ \beta_j(z;a) \right\}_{j=1}^J$. \\
\> Slow-scale potential $V$. \\
\> Diffusion coefficient $\Sigma$.
\end{tabbing}
\begin{tabbing}
\hspace*{\algorithmicindent} \textbf{Output:} \= Estimation $\widehat A^\epl_{N,J}$ of $A$.
\end{tabbing}
\begin{algorithmic}[1]
\State Consider the eigenvalue problem $\Sigma \phi_j''(x;a) - a \cdot V'(x) \phi_j'(x;a) + \lambda_j(a) \phi_j(x;a) = 0$.
\State Compute the first $J$ eigenvalues $\left\{ \lambda_j(a) \right\}_{j=1}^J$ and eigenfunctions $\left\{ \phi_j(\cdot;a) \right\}_{j=1}^J$.
\State Construct the function $g_j(x,y,z;a) = \beta_j(z;a) \left( \phi_j(y;a) - e^{-\lambda_j(a)\Delta} \phi_j(x;a) \right)$.
\State Construct the score function $\widehat G_{N,J}^\epl(a) = \frac1\Delta \sum_{n=0}^{N-1} \sum_{j=1}^J g_j(\widetilde X_n^\epl, \widetilde X_{n+1}^\epl, \widetilde X_n^\epl; a)$.
\State Let $\widehat A^\epl_{N,J}$ be the solution of the nonlinear system $\widehat G_{N,J}^\epl(a) = 0$.
\end{algorithmic}
\end{algorithm}

\subsection{The filtering approach}

We now go back to our multiscale SDE \eqref{eq:SDE_MS} and, inspired by \cite{AGP20}, we propose a second estimator for the homogenized drift coefficient by filtering the data. In particular, we modify $\widehat A_{N,J}^\epl$ by filtering the observations and inserting the new data into the score function $\widehat G_{N,J}^\epl$ in order to take into account the case when the step size $\Delta$ is too small with respect to the multiscale parameter $\epl$. Let us consider the exponential kernel $k \colon \R^+ \to \R$ defined as
\begin{equation} \label{eq:def_filter_beta}
k(r) = e^{-r},
\end{equation}
for which a rigorous theory has been developed in \cite{AGP20}. We remark that this exponential kernel is a low-pass filter, which cuts the high frequencies and highlights the slowest components. We then define the filtered observations $\{ \widetilde Z_n^\epl \}_{n=0}^N$ choosing $\widetilde Z_0^\epl = 0$ and computing the weighted average for all $n = 1,\dots,N$
\begin{equation} \label{eq:Z_tilde}
\widetilde Z^\epl_n = \Delta \sum_{k=0}^{n-1} k(\Delta(n-k)) \widetilde X^\epl_k,
\end{equation}
where the fast-scale component of the original multiscale trajectory is eliminated, and we define the new score function as a modification of \eqref{eq:score_function_NOfilter}, i.e.,
\begin{equation} \label{eq:score_function_YESfilter}
\widetilde G_{N,J}^\epl(a) = \frac1\Delta \sum_{n=0}^{N-1} \sum_{j=1}^J g_j(\widetilde X_n^\epl, \widetilde X_{n+1}^\epl, \widetilde Z_n^\epl; a).
\end{equation}

\begin{remark}
Notice that the filtered data only partially replace the original data in the definition of the score function. This idea is inspired by \cite{AGP20} where the same approach is used with the maximum likelihood estimator. The importance of keeping also the original observations becomes apparent in the proofs of the main results. However, a simple intuition is provided by equation \eqref{eq:martingale_formula}. This equation is essential in order to obtain the unbiasedness of the estimators when the sampling rate $\Delta$ is independent of the multiscale parameter $\epl$, but it is not valid for the filtered process. 
\end{remark}

\paragraph{The estimator $\widetilde A^\epl_{N,J}$.} The second estimator $\widetilde A^\epl_{N,J}$ is given by the solution of the $M$-dimensional nonlinear system
\begin{equation} \label{eq:system2solve_filter}
\widetilde G^\epl_{N,J}(a) = 0.
\end{equation}
The main steps to compute the estimator $\widetilde A^\epl_{N,J}$ are highlighted in Algorithm \ref{alg:YESfilter} and additional details about the implementation can be found in Appendix \ref{app:implementation}. Note that \eqref{eq:Z_tilde} can be rewritten as
\begin{equation} \label{eq:Z_tilde_1}
\widetilde Z^\epl_n = \Delta \sum_{k=0}^{n-1} e^{-\Delta (n - k)} \widetilde X^\epl_k.
\end{equation}
We introduce its continuous version $Z_t^\epl$ which will be employed in the analysis
\begin{equation} \label{eq:Z}
Z^\epl_t = \int_0^t e^{-(t-s)} X^\epl_s \dd s.
\end{equation}
We remark that the joint process $(X_t^\epl,Z_t^\epl)$ satisfies the system of multiscale SDEs 
\begin{equation} \label{eq:systemSDE_MS}
\begin{aligned}
\d X_t^\epl &= -\alpha \cdot V'(X_t^\epl) \dd t - \frac1\epl p'\left(\frac{X_t^\epl}\epl\right) \dd t + \sqrt{2\sigma} \dd W_t, \\
\d Z^\epl_t &= \left ( X^\epl_t - Z^\epl_t \right ) \dd t,
\end{aligned}
\end{equation}
and, using the theory of homogenization, when $\epl$ goes to zero it converges in law as a random variable in $\mathcal C^0([0,T];\R^2)$ to the two-dimensional process $(X_t^0,Z_t^0)$, which solves
\begin{equation} \label{eq:systemSDE_H}
\begin{aligned}
\d X_t^0 &= -A \cdot V'(X_t^0) \dd t + \sqrt{2\Sigma} \dd W_t, \\
\d Z^0_t &= \left ( X^0_t - Z^0_t \right ) \dd t.
\end{aligned}
\end{equation}
Moreover, it has been proved in \cite{AGP20} that the two-dimensional processes $(X_t^\epl,Z_t^\epl)$ and $(X_t^0,Z_t^0)$ are geometrically ergodic and their respective invariant measures have densities with respect to the Lebesgue measure denoted respectively by $\rho^\epl = \rho^\epl(x,z)$ and $\rho^0 = \rho^0(x,z)$. Let us finally remark that given discrete observations $\widetilde X_n^\epl$ we can only compute $\widetilde Z^\epl_n$, but the theory, which has to be employed for proving the convergence results, has been studied for the continuous-time process $Z^\epl_t$.
\begin{remark} \label{rem:computational_cost}
The only difference in the construction of the estimators $\widehat A^\epl_{N,J}$ and $\widetilde A^\epl_{N,J}$ is the fact that the latter requires filtered data, which are obtained from discrete observations, and thus it is computationally more expensive. Therefore, when it is possible to use the estimator without filtered data, it is preferable to employ it.
\end{remark}

\begin{algorithm}
\caption{Estimation of $A$ with filtered data} \label{alg:YESfilter}
\begin{tabbing}
\hspace*{\algorithmicindent} \textbf{Input:} \= Observations $\left\{ \widetilde X_n^\epl \right\}_{n=0}^N$. \\
\> Distance between two consecutive observations $\Delta$. \\
\> Number of eigenvalues and eigenfunctions $J$. \\
\> Functions $\left\{ \beta_j(z;a) \right\}_{j=1}^J$. \\
\> Slow-scale potential $V$. \\
\> Diffusion coefficient $\Sigma$.
\end{tabbing}
\begin{tabbing}
\hspace*{\algorithmicindent} \textbf{Output:} \= Estimation $\widetilde A^\epl_{N,J}$ of $A$.
\end{tabbing}
\begin{algorithmic}[1]
\State Consider the eigenvalue problem $\Sigma \phi_j''(x;a) - a \cdot V'(x) \phi_j'(x;a) + \lambda_j(a) \phi_j(x;a) = 0$.
\State Compute the first $J$ eigenvalues $\left\{ \lambda_j(a) \right\}_{j=1}^J$ and eigenfunctions $\left\{ \phi_j(\cdot;a) \right\}_{j=1}^J$.
\State \emph{Compute the filtered data $\left\{ \widetilde Z_n^\epl \right\}_{n=0}^N$ as $\widetilde Z_0^\epl = 0$ and $\widetilde Z^\epl_n = \Delta \sum_{k=0}^{n-1} e^{-\Delta (n - k)} \widetilde X^\epl_k$.}
\State Construct the function $g_j(x,y,z;a) = \beta_j(z;a) \left( \phi_j(y;a) - e^{-\lambda_j(a)\Delta} \phi_j(x;a) \right)$.
\State Construct the score function $\widetilde G_{N,J}^\epl(a) = \frac1\Delta \sum_{n=0}^{N-1} \sum_{j=1}^J g_j(\widetilde X_n^\epl, \widetilde X_{n+1}^\epl, \widetilde Z_n^\epl; a)$.
\State Let $\widetilde A^\epl_{N,J}$ be the solution of the nonlinear system $\widetilde G_{N,J}^\epl(a) = 0$.
\end{algorithmic}
\end{algorithm}

\section{Main results}

In this section we present the main results of this work, i.e., the asymptotic unbiasedness of the proposed estimators. We first need to introduce the following technical assumption, which is a nondegeneracy hypothesis related to the use of the implicit function theorem for the functions \eqref{eq:score_function_NOfilter} and \eqref{eq:score_function_YESfilter} in the limit as $N \to \infty$.

\begin{assumption} \label{ass:for_Dini}
Let $A$ be the homogenized drift coefficient of equation \eqref{eq:SDE_H}. Then the following hold
\begin{enumerate}
\item $\det \left( \sum_{j=1}^J \E^{\widetilde \rho^0} \left[ \left( \beta_j(\widetilde Z_0^0;A) \otimes \nabla_a X_\Delta(A) \right) \phi_j'(X_\Delta^0;A) \right] \right) \neq 0$,
\item $\det \left( \sum_{j=1}^J \E^{\varphi^0} \left[ \left( \beta_j(X_0^0;A) \otimes \nabla_a X_\Delta(A) \right) \phi_j'(X_\Delta^0;A) \right] \right) \neq 0$,
\item $\det \left( \sum_{j=1}^J \E^{\rho^0} \left[ (\beta_j(Z_0^0;A) \otimes V'(X_0^0)) \phi_j'(X_0^0;A) \right] \right) \neq 0$,
\item $\det \left( \sum_{j=1}^J \E^{\varphi^0} \left[ (\beta_j(X_0^0;A) \otimes V'(X_0^0)) \phi_j'(X_0^0;A) \right] \right) \neq 0$,
\end{enumerate}
where $\widetilde \rho^0$ is the invariant measure of the couple $(\widetilde X_n^0, \widetilde Z_n^0)$, whose existence is guaranteed by Lemma \ref{lem:ergodicity_XZ_tilde}, and $\nabla_a X_t(a)$ is the gradient of the stochastic process $X_t(a)$ in \eqref{eq:SDE_H_a} with respect to the drift coefficient $a$.
\end{assumption}

\begin{remark}
The nondegeneracy Assumption \ref{ass:for_Dini}, which is analogous to Condition 4.2(a) in \cite{KeS99}, holds true in all nonpathological examples and does not constitute an essential limitation on the range of validity of the results proved in this paper. Further details about the necessity of this assumption for the analysis of the proposed estimator will be given in Section \ref{sec:proof_main_results}.
\end{remark}

The proofs of the following two main theorems are the focus of Section \ref{sec:full_proof}.

\begin{theorem} \label{thm:unbiasedness_hat}
Let $J$ be a positive integer. Under Assumptions \ref{ass:dissipative_setting}, \ref{ass:beta_functions}, \ref{ass:for_Dini} and if $\Delta$ is independent of $\epl$ or $\Delta = \epl^\zeta$ with $\zeta\in(0,1)$, there exists $\epl_0 > 0$ such that for all $0<\epl<\epl_0$ , an estimator $\widehat A_{N,J}^\epl$ which solves the system $\widehat G_{N,J}^\epl(\widehat A_{N,J}^\epl) = 0$ exists with probability tending to one as $N\to\infty$. Moreover
\begin{equation}
\lim_{\epl\to0} \lim_{N\to\infty} \widehat A_{N,J}^\epl = A, \qquad \text{in probability},
\end{equation}
where $A$ is the homogenized drift coefficient of equation \eqref{eq:SDE_H}.
\end{theorem}

\begin{theorem} \label{thm:unbiasedness_tilde}
Let $J$ be a positive integer. Under Assumptions \ref{ass:dissipative_setting}, \ref{ass:beta_functions}, \ref{ass:for_Dini} and if $\Delta$ is independent of $\epl$ or $\Delta = \epl^\zeta$ with $\zeta>0$ and $\zeta\neq1$, $\zeta\neq2$, there exists $\epl_0 > 0$ such that for all $0<\epl<\epl_0$ an estimator $\widetilde A_{N,J}^\epl$ which solves the system $\widetilde G_{N,J}^\epl(\widetilde A_{N,J}^\epl) = 0$ exists with probability tending to one as $N\to\infty$. Moreover
\begin{equation}
\lim_{\epl\to0} \lim_{N\to\infty} \widetilde A_{N,J}^\epl = A, \qquad \text{in probability},
\end{equation}
where $A$ is the homogenized drift coefficient of equation \eqref{eq:SDE_H}.
\end{theorem}

\begin{remark}
Notice that in both Theorem \ref{thm:unbiasedness_hat} and Theorem \ref{thm:unbiasedness_tilde} the order of the limits is important and they cannot be interchanged. In fact, we first consider the large data limit, i.e., the number of observations $N$ tends to infinity, and then we let the multiscale parameter $\epl$ vanish. Moreover, in Theorem \ref{thm:unbiasedness_tilde} the values $\zeta = 1$ and $\zeta = 2$ are not allowed because of technicalities in the proof, but we observe numerically that the estimator works well also in these two particular cases.
\end{remark}

These two theorems show that both estimators based on the multiscale data from \eqref{eq:SDE_MS} converge to the homogenized drift coefficient $A$ of \eqref{eq:SDE_H}. Since the analysis is similar for the two cases, we will mainly focus on the second score function with filtered observations and at the end of each step we will state the differences with respect to the estimator without pre-processed data.

\begin{remark} \label{rem:estimation_diffusion_coefficient}
Since the main goal of this work is the estimation of the effective drift coefficient $A$, in the numerical experiments and in the following analysis we will always assume the effective diffusion coefficient $\Sigma$ to be known. Nevertheless, we remark that our methodology can be slightly modified in order to take into account the estimation of the effective diffusion coefficient too. In fact, the parameter $a$ can be replaced by the parameter $\theta = (a,s) \in \R^{M+1}$ where $a$ stands for the drift and $s$ stands for the diffusion, yielding nonlinear systems of dimension $M+1$ corresponding to \eqref{eq:system2solve} and \eqref{eq:system2solve_filter}. The proofs of the asymptotic unbiasedness of the new estimators $\widehat \theta^\epl_{N,J}$ and $\widetilde \theta^\epl_{N,J}$ can be adjusted analogously. For completeness, we provide a more detailed explanation and a numerical experiment illustrating this approach in Section \ref{sec:num_diff}.
\end{remark}

\subsection{A particular case}

Before analysing the general framework, let us consider the simple case of the Ornstein-Uhlenbeck process, i.e. let the dimension of the parameter $N=1$ and let $V(x) = x^2/2$. Then the multiscale SDE \eqref{eq:SDE_MS} becomes
\begin{equation} \label{eq:OU_MS}
\d X_t^\epl = -\alpha X_t^\epl \dd t - \frac1\epl p'\left(\frac{X_t^\epl}\epl\right) \dd t + \sqrt{2\sigma} \dd W_t,
\end{equation}
and its homogenized version is
\begin{equation} \label{eq:OU_H}
\d X_t^0 = -A X_t^0 \dd t + \sqrt{2\Sigma} \dd W_t.
\end{equation}
Letting $a\in\mathcal A$, then the eigenfunctions $\phi_j(\cdot;a)$ and the eigenvalues $\lambda_j(a)$ satisfy
\begin{equation}
\phi_j''(x;a) - \frac a\Sigma x \phi'(x) + \frac{\lambda(a)}{\Sigma} \phi(\cdot;a) = 0.
\end{equation}
The solution of the eigenvalue problem can be computed explicitly (see \cite[Section 4.4]{Pav14}); we have
\begin{equation}
\lambda_j(a) = ja,
\end{equation}
and $\phi_j(\cdot;a)$ satisfies the recurrence relation
\begin{equation}
\phi_{j+1}(x;a) = x \phi_j(x;a) - j \frac \Sigma a \phi_{j-1}(x;a),
\end{equation}
with $\phi_0(x;a) = 1$ and $\phi_1(x;a) = x$. It is also possible to prove by induction that
\begin{equation}
\phi_j'(x;a) = j \phi_{j-1}(x).
\end{equation}
Let us consider the simplest case with only one eigenfunction, i.e. $J=1$, and $\beta_1(z;a) = z$, which implies
\begin{equation}
g_1(x,y,z;a) = z \left( y - e^{-a\Delta} x \right).
\end{equation}
Then the score functions \eqref{eq:score_function_NOfilter} and \eqref{eq:score_function_YESfilter} become
\begin{equation}
\begin{aligned}
&\widehat G_{N,1}^\epl(a) = \frac1\Delta \sum_{n=0}^{N-1} \widetilde X_n^\epl \left( \widetilde X_{n+1}^\epl - e^{-a\Delta} \widetilde X_n^\epl \right), \\
&\widetilde G_{N,1}^\epl(a) = \frac1\Delta \sum_{n=0}^{N-1} \widetilde Z_n^\epl \left( \widetilde X_{n+1}^\epl - e^{-a\Delta} \widetilde X_n^\epl \right).
\end{aligned}
\end{equation}
The solutions of the equations $\widehat G_{N,1}^\epl(a) = 0$ and $\widetilde G_{N,1}^\epl(a) = 0$ can be computed analytically and are given by
\begin{equation} \label{eq:def_Ahat_OU}
\widehat A_{N,1}^\epl = - \frac1\Delta \log \left( \frac{\sum_{n=0}^{N-1} \widetilde X_n^\epl \widetilde X_{n+1}^\epl}{\sum_{n=0}^{N-1} (\widetilde X_n^\epl)^2 } \right),
\end{equation} 
and
\begin{equation} \label{eq:def_Atilde_OU}
\widetilde A_{N,1}^\epl = - \frac1\Delta \log \left( \frac{\sum_{n=0}^{N-1} \widetilde Z_n^\epl \widetilde X_{n+1}^\epl}{\sum_{n=0}^{N-1} \widetilde Z_n^\epl \widetilde X_n^\epl } \right).
\end{equation}
Comparing these estimators with the discrete MLE defined in \cite{PaS07} without filtered data as
\begin{equation}
\widehat{\mathrm{MLE}}_{N,\Delta}^\epl = - \frac{\sum_{n=0}^{N-1} \widetilde X_n^\epl ( \widetilde X_{n+1}^\epl - \widetilde X_n^\epl )}{\Delta \sum_{n=0}^{N-1} (\widetilde X_n^\epl)^2},
\end{equation}
and the discrete MLE with filtered data
\begin{equation}
\widetilde{\mathrm{MLE}}_{N,\Delta}^\epl = - \frac{\sum_{n=0}^{N-1} \widetilde Z_n^\epl ( \widetilde X_{n+1}^\epl - \widetilde X_n^\epl )}{\Delta \sum_{n=0}^{N-1} \widetilde Z_n^\epl \widetilde X_n^\epl},
\end{equation}
we notice that they coincide in the limit as $\Delta$ vanishes. We remark that we are comparing our estimator with the discrete MLE instead of the analytical formula for the MLE in continuous time since we assume that we are observing our process at discrete times. Therefore, the continuous time MLE has to be approximated using the available discrete data \cite[Section 5.3]{Pav14}. In the following theorems we show the asymptotic limit of the estimators. We do not provide a proof for these results since Theorem \ref{thm:unbiasedness_hat_OU} and Theorem \ref{thm:unbiasedness_tilde_OU} are particular cases of Theorem \ref{thm:unbiasedness_hat} and Theorem \ref{thm:unbiasedness_tilde} respectively, and Theorem \ref{thm:biasedness_hat_OU} follows from the proof of Theorem \ref{thm:unbiasedness_hat} as highlighted in Remark \ref{rem:biasedness}.

\begin{theorem} \label{thm:unbiasedness_hat_OU}
Let $\Delta$ be independent of $\epl$ or $\Delta = \epl^\zeta$ with $\zeta\in(0,1)$. Then, under Assumption \ref{ass:dissipative_setting}, the estimator \eqref{eq:def_Ahat_OU} satisfies
\begin{equation}
\lim_{\epl \to 0} \lim_{N\to\infty} \widehat A^\epl_{N,1} = A, \quad \text{in probability},
\end{equation}
where $A$ is the drift coefficient of the homogenized equation \eqref{eq:SDE_H}.
\end{theorem}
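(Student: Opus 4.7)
The plan is to handle the iterated limit one stage at a time, exploiting the closed-form expression \eqref{eq:def_Ahat_OU} that the Ornstein--Uhlenbeck structure makes available.

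First, I would take $N\to\infty$ with $\epl$ fixed. Geometric ergodicity of $X_t^\epl$ under Assumption \ref{ass:dissipative_setting} (with invariant density $\varphi^\epl$ given by \eqref{eq:invariant_measure_Xe}) makes the skip chain $\{(\widetilde X_n^\epl,\widetilde X_{n+1}^\epl)\}_{n\geq 0}$ ergodic with stationary law equal to the joint law of $(X_0^\epl,X_\Delta^\epl)$ when $X_0^\epl\sim\varphi^\epl$. Applying the pathwise ergodic theorem separately to $N^{-1}\sum(\widetilde X_n^\epl)^2$ and $N^{-1}\sum\widetilde X_n^\epl \widetilde X_{n+1}^\epl$, and using continuity of $-\Delta^{-1}\log(\cdot)$ at a positive point, yields the in-probability limit
\begin{equation*}
\widehat A^\epl_{\infty,1} \defeq \lim_{N\to\infty}\widehat A_{N,1}^\epl = -\frac{1}{\Delta}\log\!\left(\frac{\E^{\varphi^\epl}[X_0^\epl X_\Delta^\epl]}{\E^{\varphi^\epl}[(X_0^\epl)^2]}\right),
\end{equation*}
with the argument in $(0,1)$ by Cauchy--Schwarz and non-degeneracy of the transition density.

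Next I would let $\epl\to 0$. Homogenization of \eqref{eq:SDE_MS} delivers weak convergence of $(X_0^\epl,X_\Delta^\epl)$ initialised at $\varphi^\epl$ to $(X_0^0,X_\Delta^0)$ initialised at $\varphi^0$, and Assumption \ref{ass:dissipative_setting} provides $\epl$-uniform moment bounds (via Itô applied to $x^{2+\delta}$) strong enough to upgrade this weak limit to convergence of the two second-moment functionals. The OU martingale identity $\E^{\varphi^0}[X_\Delta^0\mid X_0^0] = e^{-A\Delta} X_0^0$ then gives $\E^{\varphi^0}[X_0^0 X_\Delta^0] = e^{-A\Delta}\E^{\varphi^0}[(X_0^0)^2]$, so that $\widehat A^\epl_{\infty,1}\to A$ whenever $\Delta$ is independent of $\epl$.

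The delicate regime is $\Delta=\epl^\zeta$ with $\zeta\in(0,1)$, where $\Delta\to 0$ together with $\epl$ and the $1/\Delta$ prefactor can amplify any deficit in the speed of homogenization. The argument of the log tends to $1$, so what must be controlled is the rate at which its deviation from $e^{-A\Delta}$ vanishes compared to $\Delta$ itself. The mechanism that makes this work is the subsampling condition $\Delta\gg\epl$: over an interval of length $\Delta$ the fast oscillations of $p'(\cdot/\epl)$ average against the invariant measure $\mu$ from \eqref{eq:def_mu} via the cell corrector $\Phi$ of \eqref{eq:cell_problem}, and the relative homogenization error in the two-point correlation turns out to be $o(1)$ precisely when $\zeta<1$. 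Making this quantitative---expanding both numerator and denominator to next order through $\Phi$ and bounding the remainder uniformly in $\Delta\in[\epl^\zeta,1]$---is the step I expect to be the main technical obstacle, and it is exactly what Section \ref{sec:full_proof} carries out in the general Theorem \ref{thm:unbiasedness_hat}, of which the present statement is the specialisation to $V(x)=x^2/2$, $J=1$, $\beta_1(z)=z$.
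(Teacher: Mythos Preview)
Your proposal is correct. The paper's own ``proof'' is simply the observation that Theorem~\ref{thm:unbiasedness_hat_OU} is a special case of the general Theorem~\ref{thm:unbiasedness_hat} (with $V(x)=x^2/2$, $J=1$, $\beta_1(z)=z$), so there is no separate argument to compare against.

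Your route differs in that, for $\Delta$ independent of $\epl$, you bypass the general score-function machinery entirely and argue directly from the closed-form estimator: ergodic theorem for the $N\to\infty$ limit, weak convergence plus uniform moment bounds for $\epl\to 0$, and the OU conditional-expectation identity to identify the limit as $A$. This is more elementary and more transparent than invoking the implicit function theorem as in Section~\ref{sec:proof_main_results}; indeed, the paper itself notes in Remark~\ref{rem:as_probability} that such a direct argument exists and even yields almost sure convergence in this regime. For $\Delta=\epl^\zeta$ with $\zeta\in(0,1)$ you correctly identify that the $1/\Delta$ prefactor forces a quantitative homogenization estimate and defer to Section~\ref{sec:full_proof}, which is exactly what the paper does.

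One small point to tighten: your claim that the argument of the logarithm lies in $(0,1)$ for the \emph{multiscale} process needs a word of justification. Cauchy--Schwarz and stationarity give only $\abs{\E^{\varphi^\epl}[X_0^\epl X_\Delta^\epl]}\le \E^{\varphi^\epl}[(X_0^\epl)^2]$, and positivity of $\E^{\varphi^\epl}[X_0^\epl X_\Delta^\epl]$ is not automatic since $X^\epl$ is not Gaussian. It does hold for $\epl$ sufficiently small by continuity and the limiting OU value $e^{-A\Delta}\E^{\varphi^0}[(X_0^0)^2]>0$, which is all that is needed for the iterated limit, but this should be said explicitly. Relatedly, to invoke Theorem~\ref{thm:unbiasedness_hat} as a black box you should note that Assumptions~\ref{ass:beta_functions} and~\ref{ass:for_Dini} are trivially satisfied in the OU case (polynomial eigenfunctions, $\beta_1(z)=z$), since the OU statement only lists Assumption~\ref{ass:dissipative_setting}.
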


\begin{theorem} \label{thm:biasedness_hat_OU}
Let $\Delta$ be independent of $\epl$ or $\Delta = \epl^\zeta$ with $\zeta>2$. Then, under Assumption \ref{ass:dissipative_setting}, the estimator \eqref{eq:def_Ahat_OU} satisfies
\begin{equation}
\lim_{\epl \to 0} \lim_{N\to\infty} \widehat A^\epl_{N,1} = \alpha, \quad \text{in probability},
\end{equation}
where $\alpha$ is the drift coefficient of the homogenized equation \eqref{eq:SDE_MS}.
\end{theorem}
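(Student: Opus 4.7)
The strategy mirrors the three-step scheme behind the proof of Theorem~\ref{thm:unbiasedness_hat}: pass to the $N\to\infty$ limit using ergodicity, perform a short-time Itô expansion of the resulting stationary expectation, and then take $\epl\to 0$ to identify the zero of the limiting score. What differs in the present fast-sampling regime $\Delta=\epl^\zeta$ with $\zeta>2$ is that the microscopic scale is no longer averaged out over a sampling interval, so the limit picks up the drift $\alpha$ of the microscopic equation \eqref{eq:OU_MS} instead of the homogenized drift $A$. Starting from the closed form \eqref{eq:def_Ahat_OU}, geometric ergodicity of $X_t^\epl$ under Assumption~\ref{ass:dissipative_setting} gives, in probability,
\[
\widehat A_{N,1}^\epl \xrightarrow[N\to\infty]{} -\frac{1}{\Delta}\log R^\epl(\Delta), \qquad R^\epl(\Delta):=\frac{\E^{\varphi^\epl}[X_0^\epl X_\Delta^\epl]}{\E^{\varphi^\epl}[(X_0^\epl)^2]},
\]
so everything reduces to analysing the asymptotic behaviour of $R^\epl(\Delta)$.

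The heart of the argument is a short-time expansion of $R^\epl(\Delta)$. Applying Itô's formula to $f(x)=x$ in \eqref{eq:OU_MS}, multiplying by $X_0^\epl$, taking stationary expectation, and using the elementary estimate $\int_0^\Delta \E^{\varphi^\epl}[X_0^\epl X_s^\epl]\dd s = \Delta\,\E^{\varphi^\epl}[(X_0^\epl)^2] + O(\Delta^2)$ obtained by the same Itô argument, I arrive at
\[
R^\epl(\Delta) = 1 - \alpha\Delta - \frac{1}{\E^{\varphi^\epl}[(X_0^\epl)^2]\,\epl}\int_0^\Delta \E^{\varphi^\epl}\!\left[X_0^\epl\, p'(X_s^\epl/\epl)\right]\dd s + o(\Delta).
\]
Once the remaining multiscale integral is shown to be $o(\Delta)$, one concludes $R^\epl(\Delta)=1-\alpha\Delta+o(\Delta)$, whence $-\Delta^{-1}\log R^\epl(\Delta)\to\alpha$ and the theorem follows.

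Controlling this integral is the main obstacle, and it is precisely where the threshold $\zeta>2$ enters. I would split it at $s=0$, writing it as $\Delta\,\epl^{-1}\E^{\varphi^\epl}[X_0^\epl\,p'(X_0^\epl/\epl)]$ plus a correction. For the constant piece, the key observation that $\epl^{-1}p'(y/\epl)e^{-p(y/\epl)/\sigma}=-\sigma\,\partial_y e^{-p(y/\epl)/\sigma}$, combined with integration by parts against the Gibbs density $\varphi^\epl$ in \eqref{eq:invariant_measure_Xe}, yields the Bismut-type identity
\[
\frac{1}{\epl}\E^{\varphi^\epl}\!\left[X_0^\epl\, p'(X_0^\epl/\epl)\right] = \sigma - \alpha\,\E^{\varphi^\epl}[(X_0^\epl)^2],
\]
which tends to $0$ as $\epl\to 0$ because $\E^{\varphi^\epl}[(X_0^\epl)^2]\to\Sigma/A=\sigma/\alpha$; so this piece is $\Delta\cdot o(1)=o(\Delta)$. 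For the correction, Lipschitz continuity of $p'$ together with the standard short-time bound $\E^{\varphi^\epl}[(X_s^\epl-X_0^\epl)^2]\le Cs$ produces an estimate of order $\Delta\cdot\epl^{\zeta/2-1}$, which is $o(\Delta)$ exactly when $\zeta>2$. Combining the three steps yields the stated convergence to $\alpha$.
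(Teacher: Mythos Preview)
Your overall strategy is sound and, for the Ornstein--Uhlenbeck case, more direct than the paper's route through Remark~\ref{rem:continuity_G_hat_0_De} and Remark~\ref{rem:biasedness}: you work with the explicit ratio $R^\epl(\Delta)$, the integration-by-parts identity for the static piece $\epl^{-1}\E^{\varphi^\epl}[X_0^\epl p'(X_0^\epl/\epl)]=\sigma-\alpha\,\E^{\varphi^\epl}[(X_0^\epl)^2]$ is correct, and the conclusion that this vanishes in the limit because $\E^{\varphi^\epl}[(X_0^\epl)^2]\to\sigma/\alpha$ is fine.

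However, there is a genuine gap in your treatment of the correction term
\[
\frac{1}{\epl}\int_0^\Delta \E^{\varphi^\epl}\!\left[X_0^\epl\bigl(p'(X_s^\epl/\epl)-p'(X_0^\epl/\epl)\bigr)\right]\dd s .
\]
You invoke Lipschitz continuity of $p'$, but the composed map $x\mapsto p'(x/\epl)$ has Lipschitz constant of order $\epl^{-1}$, so the bound you actually obtain is
\[
\frac{C}{\epl^{2}}\int_0^\Delta \bigl(\E[(X_s^\epl-X_0^\epl)^2]\bigr)^{1/2}\dd s \;\le\; C\,\epl^{-2}\Delta^{3/2}
\;=\; \Delta\cdot C\,\epl^{\zeta/2-2},
\]
not $\Delta\cdot\epl^{\zeta/2-1}$ as you claim. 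This only yields $o(\Delta)$ for $\zeta>4$, so your argument as written does not reach the threshold $\zeta>2$.

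The fix is cheap in this reversible setting: since the dynamics \eqref{eq:OU_MS} is a gradient flow, stationarity and reversibility give $\E^{\varphi^\epl}[X_0^\epl\,p'(X_s^\epl/\epl)]=\E^{\varphi^\epl}[X_s^\epl\,p'(X_0^\epl/\epl)]$. Writing $X_s^\epl=X_0^\epl+(X_s^\epl-X_0^\epl)$ and using merely \emph{boundedness} of $p'$ together with the increment estimate $\E[(X_s^\epl-X_0^\epl)^2]\le Cs$ (valid uniformly in $\epl$ since $s\le\Delta=\epl^\zeta\le\epl^2$) gives a correction of order $\epl^{-1}\Delta^{3/2}=\Delta\cdot\epl^{\zeta/2-1}$, which is $o(\Delta)$ precisely when $\zeta>2$. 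This is in the same spirit as the stationarity-plus-integration-by-parts manipulation the paper uses in Case~2 of Proposition~\ref{pro:continuity_G_tilde_0_De}, but exploits the OU structure to avoid the heavier machinery.
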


\begin{theorem} \label{thm:unbiasedness_tilde_OU}
Let $\Delta$ be independent of $\epl$ or $\Delta = \epl^\zeta$ with $\zeta\neq1$, $\zeta\neq2$. Then, under Assumption \ref{ass:dissipative_setting}, the estimator \eqref{eq:def_Atilde_OU} satisfies
\begin{equation}
\lim_{\epl \to 0} \lim_{N\to\infty} \widetilde A^\epl_{N,1} = A, \quad \text{in probability},
\end{equation}
where $A$ is the drift coefficient of the homogenized equation \eqref{eq:SDE_H}.
\end{theorem}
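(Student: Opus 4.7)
The plan is to exploit the closed-form expression \eqref{eq:def_Atilde_OU} and reduce the claim to the convergence of a single ratio of ergodic time averages, after which the OU Markov structure computes the limit exactly. Writing
$$\widetilde A^\epl_{N,1} = -\frac{1}{\Delta}\log\!\left(\frac{S_N^{(1),\epl}}{S_N^{(0),\epl}}\right), \qquad S_N^{(i),\epl} \defeq \frac{1}{N}\sum_{n=0}^{N-1}\widetilde Z_n^\epl\,\widetilde X_{n+i}^\epl,$$
and observing $A = -\frac{1}{\Delta}\log(e^{-A\Delta})$, by continuity of the logarithm away from zero it suffices to show that $S_N^{(1),\epl}/S_N^{(0),\epl}\to e^{-A\Delta}$ in probability in the iterated limit, with the denominator bounded away from zero.

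First I would fix $\epl$ and send $N\to\infty$. The joint process $(X_t^\epl,Z_t^\epl)$ in \eqref{eq:systemSDE_MS} is geometrically ergodic with invariant density $\rho^\epl$, so its time-$\Delta$ skeleton $\{(\widetilde X_n^\epl,\widetilde Z_n^\epl)\}$ is an ergodic Markov chain. Applying Birkhoff's theorem to the function $(x_0,z_0,x_1)\mapsto z_0 x_1$ of two consecutive samples (equivalently, to the ergodic pair chain), the time averages converge in probability,
$$S_N^{(0),\epl}\longrightarrow \E^{\rho^\epl}\!\left[Z_0^\epl X_0^\epl\right], \qquad S_N^{(1),\epl}\longrightarrow \E^{\rho^\epl}\!\left[Z_0^\epl X_\Delta^\epl\right],$$
where finiteness of these second moments is ensured by the $\epl$-uniform moment bounds for $(X_t^\epl,Z_t^\epl)$ established in \cite{AGP20}.

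Next I would let $\epl\to 0$. Homogenization of the coupled system \eqref{eq:systemSDE_MS}$\to$\eqref{eq:systemSDE_H} together with convergence of the invariant measures $\rho^\epl\to\rho^0$, upgraded to second-moment convergence by the uniform moment bounds, gives
$$\E^{\rho^\epl}\!\left[Z_0^\epl X_0^\epl\right]\to \E^{\rho^0}\!\left[Z_0^0 X_0^0\right], \qquad \E^{\rho^\epl}\!\left[Z_0^\epl X_\Delta^\epl\right]\to \E^{\rho^0}\!\left[Z_0^0 X_\Delta^0\right],$$
for $\Delta$ either fixed or $\Delta=\epl^\zeta$ with $\zeta$ in the admissible range. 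In the homogenized OU dynamics \eqref{eq:OU_H}, the Markov property of $X^0$ yields $\E[X_\Delta^0\mid X_0^0,Z_0^0]=\E[X_\Delta^0\mid X_0^0]=e^{-A\Delta}X_0^0$ (since $Z_0^0$ is a functional of the past of $X^0$), so by the tower property
$$\E^{\rho^0}\!\left[Z_0^0 X_\Delta^0\right] = e^{-A\Delta}\,\E^{\rho^0}\!\left[Z_0^0 X_0^0\right].$$
The limiting ratio is therefore exactly $e^{-A\Delta}$, and the OU instance of Assumption~\ref{ass:for_Dini}(iii) guarantees $\E^{\rho^0}[Z_0^0 X_0^0]\neq 0$, so the logarithm is well-defined.

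The main obstacle is the regime $\Delta=\epl^\zeta\to 0$, where the two limits interact and resonances between the filter time scale (order one), the observation spacing, and the fast scale $\epl$ threaten the homogenization convergence in the second step. For $\zeta\in(0,1)$ the spacing is large relative to $\epl$ and the data is essentially compatible with the homogenized dynamics, while for $\zeta>1$ the exponential filter averages the fast fluctuations out of $\widetilde Z_n^\epl$; in both cases a careful expansion of $Z_t^\epl$ using the cell problem \eqref{eq:cell_problem}, along the lines of \cite{AGP20}, shows that the leading-order behaviour of both ergodic means is still governed by the homogenized quantities and a standard perturbation argument propagates through the logarithm. The values $\zeta=1$ and $\zeta=2$ are excluded precisely because at these resonances (observation rate matching $\epl$, respectively matching the diffusive scale $\epl^2$) the expansions pick up non-vanishing correction terms, mirroring the exclusions of Theorem~\ref{thm:unbiasedness_tilde}.
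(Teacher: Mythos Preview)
Your approach differs from the paper's: the paper does not give a separate argument but simply invokes the general Theorem~\ref{thm:unbiasedness_tilde} (whose proof treats arbitrary $V$ via the implicit function theorem applied to the limiting score function), whereas you exploit the explicit closed form \eqref{eq:def_Atilde_OU} directly. The paper in fact acknowledges in Remark~\ref{rem:as_probability} that such a direct route exists and can even yield almost-sure convergence when $\Delta$ is independent of $\epl$, so for that regime your strategy is both legitimate and sharper.

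Two issues, however. The first is minor: $\widetilde Z_n^\epl$ in \eqref{eq:Z_tilde} is \emph{not} the time-$\Delta$ skeleton of the continuous filter $Z_t^\epl$; it is a discrete Riemann-sum approximation. The pair $(\widetilde X_n^\epl,\widetilde Z_n^\epl)$ has its own invariant law $\widetilde\rho^\epl$ (Lemma~\ref{lem:ergodicity_XZ_tilde}), distinct from $\rho^\epl$, and the discrepancy is controlled by Corollary~\ref{cor:asymptotic_distances_Z}. Your ergodic limits should read $\E^{\widetilde\rho^\epl}[\widetilde Z_0^\epl X_\bullet^\epl]$, and replacing $\widetilde\rho^\epl$ by $\rho^\epl$ is an additional step, not an identification.

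The second is a genuine gap in the regime $\Delta=\epl^\zeta$. When $\Delta\to 0$, both numerator and denominator of your ratio converge to the \emph{same} limit $\E^{\rho^0}[Z_0^0 X_0^0]$, so the ratio tends to $1$ and $-\tfrac{1}{\Delta}\log(\cdot)$ is a $0/0$ form. Recovering $A$ requires the ratio to equal $e^{-A\Delta}\bigl(1+o(\Delta)\bigr)$, i.e.\ the homogenization error in $\E^{\widetilde\rho^\epl}[\widetilde Z_0^\epl X_\Delta^\epl]-e^{-A\Delta}\E^{\widetilde\rho^\epl}[\widetilde Z_0^\epl X_0^\epl]$ must be $o(\Delta)$, not merely $o(1)$. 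Your argument only establishes convergence of the individual means, which gives $o(1)$; the final paragraph gestures at a cell-problem expansion but never addresses this quantitative rate. This is precisely what Proposition~\ref{pro:continuity_G_tilde_0_De} supplies in the paper's route: there the $1/\Delta$ is built into the score function from the outset, and the two-case analysis ($\zeta\in(0,1)$ via Lemma~\ref{lem:expansion_fXD}, $\zeta>1$ via It\^o's formula and the filter identity~\eqref{eq:magic}) produces exactly the needed $o(\Delta)$ control. Without an equivalent estimate, your argument in this regime is incomplete.
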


\begin{remark} \label{rem:as_probability}
Notice that it is possible to write different proofs for Theorems \ref{thm:unbiasedness_hat_OU}, \ref{thm:biasedness_hat_OU} and \ref{thm:unbiasedness_tilde_OU}, which take into account the specific form of the estimators, and thus show stronger results. In fact, if the distance $\Delta$ between two consecutive observations is independent of the multiscale parameter $\epl$, then the convergences in the statements do not only hold in probability, but also almost surely. We expect that almost sure convergence can be proved for a larger class of equations, but are neither aware of related literature showing such a stronger result, nor have been able to prove it.
\end{remark}

\section{Numerical experiments}

In this section we present numerical experiments which confirm our theoretical results and show the power of the martingale estimating functions based on eigenfunctions and filtered data to correct the unbiasedness caused by discretization and the fact that we are using multiscale data to fit homogenized models. Moreover, we present a sensitivity analysis with respect to the number $N$ of observations and the number $J$ of eigenvalues and eigenfunctions taken into account. In the experiments that we present data are generated employing the Euler--Maruyama method with a fine time step $h$, in particular we set $h = \epl^3$. Letting $\Delta,T>0$, we generate data $X_t^\epl$ for $0 \le t \le T$ and we select a sequence of observations $\{ \widetilde X^\epl_n \}_{n=0}^N$, where $N=T/\Delta$ and $\widetilde X^\epl_n = X^\epl_{t_n}$ with $t_n = n\Delta$. In view of Remark \ref{rem:initial_condition} we do not require stationarity of the multiscale dynamics, hence we always set the initial condition to be $X_0^\epl = 0$. Notice that the time step $h$ is only used to generate numerically the original data and has to be chosen sufficiently small in order to have a reliable approximation of the continuous path. However, the distance between two consecutive observations $\Delta$ is the rate at which we sample the data, which we assume to know, from the original trajectory. In order to compute the filtered data $\{ \widetilde Z^\epl_n \}_{n=1}^N$ we employ equation \eqref{eq:Z_tilde_1}. We repeat this procedure for $M = 15$ different realizations of Brownian motion and we plot the average of the drift coefficients computed by the estimators. We finally remark that in order to compute our estimators we need the value of the diffusion coefficient $\Sigma$ of the homogenized equation. In all the numerical experiments we compute it exactly using the formula for the coefficient $K$ given by the theory of homogenization, but we also remark that its value could be estimated employing the subsampling technique presented in \cite{PaS07} or modifying the estimating function as explained in Remark \ref{rem:estimation_diffusion_coefficient}.

\subsection{Sensitivity analysis with respect to the number of observations} \label{sec:sensitivity}

We consider the multiscale Ornstein--Uhlenbeck process, i.e. equation \eqref{eq:SDE_MS} with $V(x)=x^2/2$, and we take $p(y)=\cos(y)$, the multiscale parameter $\epl=0.1$, the drift coefficient $\alpha = 1$ and the diffusion coefficient $\sigma = 1$. Notice that for this choice of the slow-scale potential the technical assumptions required in the main Theorems \ref{thm:unbiasedness_hat}, \ref{thm:unbiasedness_tilde} can be easily checked. We plot the results computed by the estimator $\widetilde A_{N,J}^\epl$ with $J=1$ and $\beta(x;a) = x$ and we then divide the analysis in two cases: $\Delta$ ``small'' and $\Delta$ ``big''.

\begin{figure}[t]
\centering
\includegraphics[]{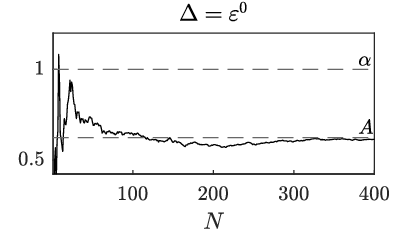}
\includegraphics[]{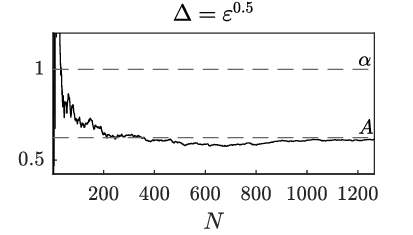} \\
\includegraphics[]{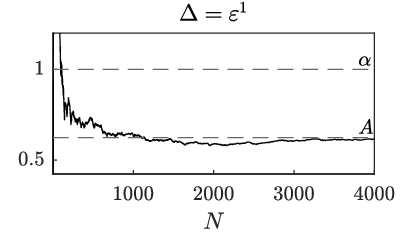}
\includegraphics[]{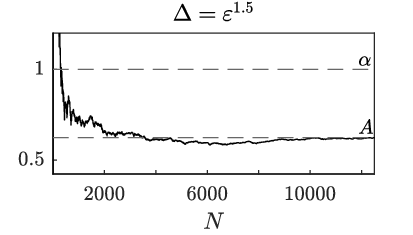}
\caption{Sensitivity analysis with respect to the number $N$ of observations for different values of $\Delta \le 1$, for the estimator $\widetilde A^\epl_{N,J}$ with $J=1$.}
\label{fig:comparisonN}
\end{figure}

Let us first consider $\Delta$ ``small'', i.e. $\Delta = \epl^\zeta$ with $\zeta = 0, 0.5, 1, 1.5$, and take $T = 400$. In Figure \ref{fig:comparisonN} we plot the results of the estimator as a function of the number of observations $N$. We remark that in this case the number of observations needed to reach convergence is strongly dependent and inversely proportional to the distance $\Delta$ between two consecutive observations. This means that in order to reach convergence we need the final time $T$ to be sufficiently large independently of $\Delta$. In fact, when the distance $\Delta$ is small, the discrete observations are a good approximation of the continuous trajectory and therefore what matters most is the length $T$ of the original path rather than the number $N$ of observations.

\begin{figure}[t]
\centering
\includegraphics[]{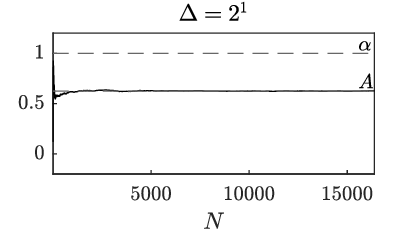}
\includegraphics[]{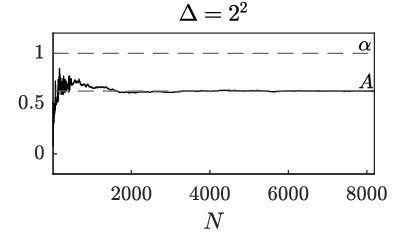} \\
\includegraphics[]{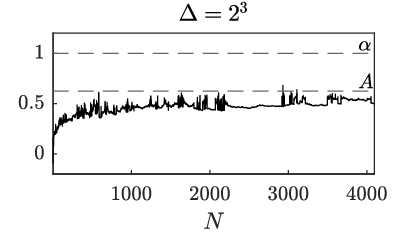}
\includegraphics[]{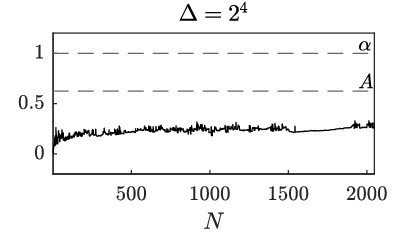}
\caption{Sensitivity analysis with respect to the number $N$ of observations for different values of $\Delta > 1$, for the estimator $\widetilde A^\epl_{N,J}$ with $J=1$.}
\label{fig:comparisonNfewData}
\end{figure}

In order to study the case $\Delta$ ``big'', i.e. $\Delta > 1$, we set $\Delta = 2^\zeta$ with $\zeta = 1, 2, 3, 4$, and take $T = 2^{15}$. Figure \ref{fig:comparisonNfewData} shows that in this case the number of observations needed to reach convergence is an increasing function of $\Delta$. Therefore, in order to have a reliable approximation of the drift coefficient of the homogenized equation, the final time $T$ has to be chosen depending on $\Delta$. This is justified by the fact that, differently from the previous case, the discrete data are less correlated and therefore they do not well approximate the continuous trajectory. In particular, when the distance $\Delta$ between two consecutive observations is very large, then in practice we need a huge amount of data because a good approximation of the unknown coefficient is obtained only if the final time $T$ is very large.

\subsection{Sensitivity analysis with respect to the number of eigenvalues and eigenfunctions}

Let us now consider equation \eqref{eq:SDE_MS} with four different slow-scale potentials
\begin{equation} \label{eq:4potentials}
V_1(x) = \frac{x^2}{2}, \qquad V_2(x) = \frac{x^4}{4}, \qquad V_3(x) = \frac{x^6}{6}, \qquad V_4(x) = \frac{x^4}{4}-\frac{x^2}{2}.
\end{equation}
The other functions and parameters of the SDE are chosen as in the previous subsection, i.e. $p(y)=\cos(y)$, $\alpha=1$, $\sigma=1$ and $\epl=0.1$. Moreover, we set $\Delta=\epl$ and $T=500$ and we vary $J = 1, \dots, 10$. The functions $\{ \beta_j \}_{j=1}^{10}$ appearing in the estimating function are given by $\beta_j(x;a) = x$ for all $j = 1, \dots, J$.

\begin{figure}[t]
\centering
\includegraphics[]{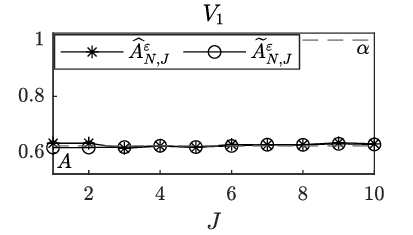}
\includegraphics[]{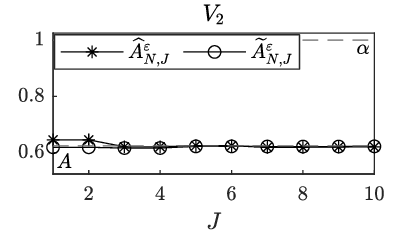} \\
\includegraphics[]{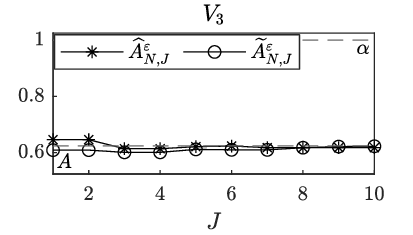}
\includegraphics[]{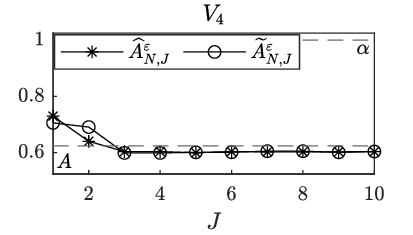}
\caption{Sensitivity analysis with respect to the number $J$ of eigenvalues and eigenfunctions for different slow-scale potentials, for the estimators $\widehat A^\epl_{N,J}$ and $\widetilde A^\epl_{N,J}$.}
\label{fig:comparisonJ}
\end{figure}

In Figure \ref{fig:comparisonJ}, where we plot the values computed by $\widehat A_{N,J}^\epl$ and $\widetilde A_{N,J}^\epl$, we observe that the number $J$ of eigenvalues and eigenfunctions slightly improve the results, in particular for the fourth potential, but the estimation stabilizes when the number of eigenvalues $J$ is still small, e.g. $J = 3$. Therefore, in order to reduce the computational cost, it seems to be preferable not to take large values of $J$. This is related to how quickly the eigenvalues grow and, therefore, how quickly the corresponding exponential terms decay. The rigorous study of the accuracy of the spectral estimators as a function of the number of eigenvalues and eigenfunctions that we take into account will be investigated elsewhere.

\subsection{Verification of the theoretical results}

We consider the same setting as in the previous subsection, i.e. equation \eqref{eq:SDE_MS} with slow-scale potentials given by \eqref{eq:4potentials} and $p(y)=\cos(y)$, $\alpha=1$, $\sigma=1$ and $\epl=0.1$. Moreover, we set $J=1$, $\beta(x;a) = x$ and $T=500$ and we choose the distance between two successive observations to be $\Delta = \epl^\zeta$ with $\zeta=0,0.1,0.2,\dots,2.5$.

\begin{figure}[t]
\centering
\includegraphics[]{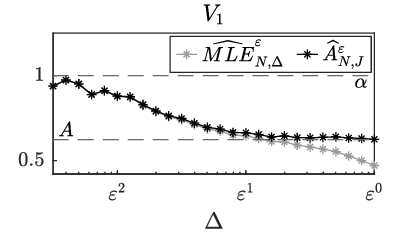}
\includegraphics[]{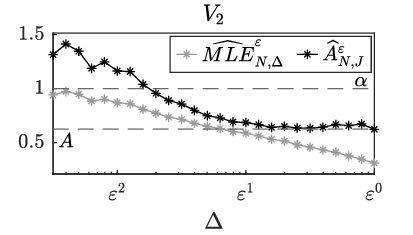}
\includegraphics[]{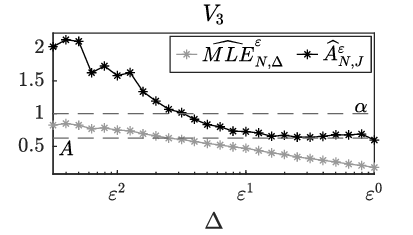}
\includegraphics[]{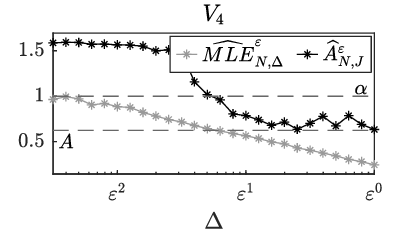}
\caption{Comparison between the discrete maximum likelihood estimator $\widehat{\mathrm{MLE}}_{N,\Delta}^\epl$ presented in \cite{PaS07} and our estimator $\widehat A_{N,J}^\epl$ with $J=1$ without filtered data as a function of the distance $\Delta$ between two successive observations for different slow-scale potentials.}
\label{fig:comparisonDelta_big}
\end{figure}

In Figure \ref{fig:comparisonDelta_big} we compare our martingale estimator $\widehat A_{N,J}^\epl$ without filtered data with the discrete maximum likelihood estimator denoted $\widehat{\mathrm{MLE}}_{N,\Delta}^\epl$. The MLE does not provide good results for two reasons:
\begin{itemize}
\item if $\Delta$ is small, more precisely if $\Delta = \epl^\zeta$ with $\zeta > 1$, sampling the data does not completely eliminate the fast-scale components of the original trajectory, therefore, since we are employing data generated by the multiscale model, the estimator is trying to approximate the drift coefficient $\alpha$ of the multiscale equation, rather than the one of the homogenized equation;
\item if $\Delta$ is relatively big, in particular if $\Delta = \epl^\zeta$ with $\zeta \in [0,1)$, then we are taking into account only the slow-scale components of the original trajectory, but a bias is still introduced because we are discretizing an estimator which is usually used for continuous data.
\end{itemize}
Nevertheless, as observed in these numerical experiments and investigated in greater detail in \cite{PaS07}, there exists an optimal value of $\Delta$ such that $\widehat{\mathrm{MLE}}_{N,\Delta}^\epl$ works well, but this value is not known a priori and is strongly dependent on the problem, hence this technique is not robust. Figure \ref{fig:comparisonDelta_big} shows that the second issue, i.e., when $\Delta$ is relatively big, can be solved employing $\widehat A_{N,J}^\epl$, an estimator for discrete observations, and that filtering the data is not needed as proved in Theorem \ref{thm:unbiasedness_hat}.

\begin{figure}[t]
\centering
\includegraphics[]{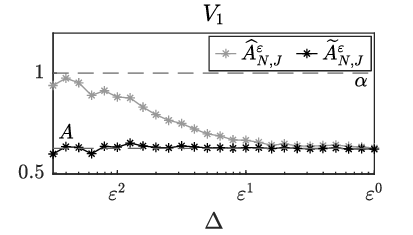}
\includegraphics[]{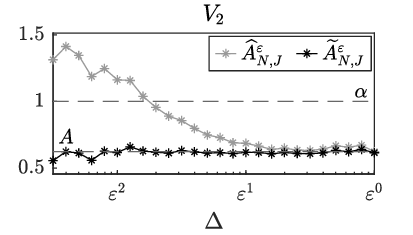}
\includegraphics[]{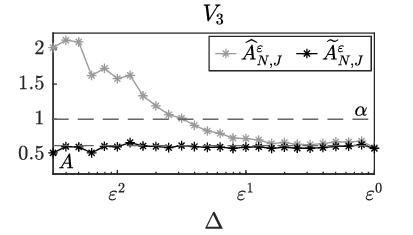}
\includegraphics[]{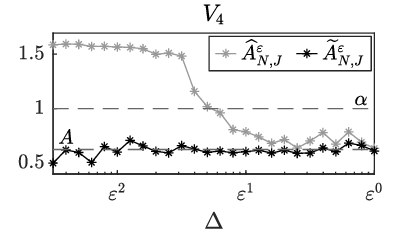}
\caption{Comparison between our two estimators $\widehat A_{N,J}^\epl$ without filtered data and $\widetilde A_{N,J}^\epl$ with filtered data with $J=1$ as a function of the distance $\Delta$ between two successive observations for different slow-scale potentials.}
\label{fig:comparisonDelta_small}
\end{figure}

Then, in order to solve also the first problem, in Figure \ref{fig:comparisonDelta_small} we compare $\widehat A_{N,J}^\epl$ with our martingale estimator $\widetilde A_{N,J}^\epl$ with filtered data. We observe that inserting filtered data in the estimator allows us to disregard the fast-scale components of the original trajectory and to obtain good approximations of the drift coefficient $A$ of the homogenized equation independently of $\Delta$, as already shown in Theorem \ref{thm:unbiasedness_tilde}. In particular, we notice that the results still improve even for big values of $\Delta$ if we employ the estimator based on filtered data. Finally, as highlighted in Remark \ref{rem:biasedness}, we observe that the limiting value of the estimator $\widehat A_{N,J}^\epl$ as the number of observations $N$ goes to infinity and the multiscale parameter $\epl$ vanishes is strongly dependent on the problem and can not be computed theoretically. However, if we consider the slow-scale potential $V_1(x) = x^2/2$, i.e. the multiscale Ornstein–Uhlenbeck process, then the limit, as proved in Theorem \ref{thm:biasedness_hat_OU}, is the drift coefficient $\alpha$ of the multiscale equation.

\subsection{Multidimensional drift coefficient} \label{sec:multi_coeff}

In this experiment we consider a multidimensional drift coefficient, in particular we set $N=2$. We then consider the bistable potential, i.e.,
\begin{equation}
V (x) = \begin{pmatrix} \frac{x^4}{4} & -\frac{x^2}{2} \end{pmatrix}^\top,
\end{equation}
and the fast-scale potential $p(y) = \cos(y)$. We choose the exact drift coefficient of the multiscale equation \eqref{eq:SDE_MS} to be $\alpha = \begin{pmatrix} 1.2 & 0.7\end{pmatrix}^\top$ and the diffusion coefficient to be $\sigma = 0.7$. We also set the number of eigenfunctions $J = 1$, the function $\beta(x;a) = \begin{pmatrix} x^3 & x \end{pmatrix}^\top$, the distance between two consecutive observations $\Delta = 1$ and the final time $T = 1000$. We then compute the estimator $\widehat A^\epl_{N,J}$ after $N = 100, 200, \dots, 1000$ observations and in Figure \ref{fig:example2D} we plot the result of the experiment for the cases $\epl = 0.1$ and $\epl = 0.05$. Since we are analysing the case $\Delta$ independent of $\epl$, filtering the data is not necessary and therefore we consider the estimator $\widehat A^\epl_{N,J}$ which is computationally less expensive to compute.

\begin{figure}[t]
\centering
\includegraphics[]{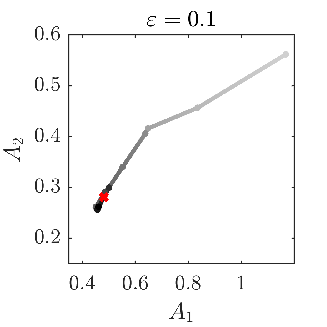}
\includegraphics[]{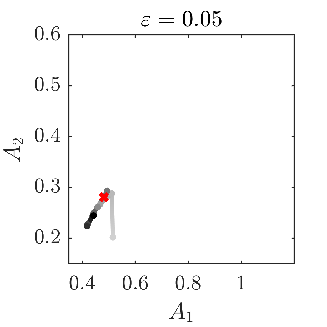} 
\hspace{0.25cm}
\includegraphics[]{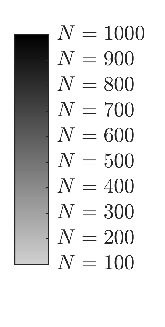}
\caption{Evolution in time of the estimator $\widehat A_{N,J}^\epl$ with $J=1$ for a two-dimensional drift coefficient.}
\label{fig:example2D}
\end{figure}

We observe that the estimation is approaching the exact value $A$ of the drift coefficient of the homogenized equation as the number of observations increases, until it starts oscillating around the true value $A = \begin{pmatrix} 0.48 & 0.28\end{pmatrix}^\top$. Moreover, we notice that the time needed to reach a neighborhood of $A$ is smaller when the multiscale parameter $\epl$ is closer to its vanishing limit. In Table \ref{tab:error2D} we report the absolute error $\widehat e^\epl_N$ defined as
\begin{equation} \label{eq:error_def}
\widehat e^\epl_N = \norm{A - \widehat A^\epl_{N,J}}_2,
\end{equation}
where $\norm{\cdot}_2$ denotes the euclidean norm, varying the number of observations $N$ for the two values of the multiscale parameter.

\begin{table}
\centering
\begin{tabular}{ccccccccccc}
\toprule
$N$ & $100$ & $200$ & $300$  & $400$ & $500$ & $600$ & $700$  & $800$ & $900$ & $1000$ \\ 
\midrule
$\epl = 0.1$ & $0.742$ & $0.395$ & $0.215$ & $0.201$ & $0.093$ & $0.036$ & $0.011$ & $0.027$ & $0.034$ & $0.028$ \\
$\epl = 0.05$ & $0.086$ & $0.031$ & $0.019$ & $0.031$ & $0.018$ & $0.049$ & $0.081$ & $0.085$ & $0.055$ & $0.053$ \\
\bottomrule
\end{tabular}
\caption{Absolute error $\widehat e^\epl_N$ defined in \eqref{eq:error_def} between the homogenized drift coefficient $A$ and the estimator $\widehat A^\epl_{N,J}$ with $J=1$ for a two-dimensional drift coefficient.}
\label{tab:error2D}
\end{table}

\subsection{Multidimensional stochastic process: interacting particles} \label{sec:IP}

In this section we consider a system of $d$ interacting particles in a two-scale potential, a problem with a wide range of applications which has been studied in \cite{GoP18}. For $t \in [0,T]$ and for all $i = 1, \dots, d$, consider the system of SDEs
\begin{equation} \label{eq:systemIP_MS}
\dd X_i^\epl(t) = - \alpha X_i^\epl(t) \dd t - \frac1\epl p' \left( \frac{X_i^\epl(t)}{\epl} \right) - \frac{\theta}{d} \sum_{j=1}^d \left( X_i^\epl(t) - X_j^\epl(t) \right) \dd t + \sqrt{2\sigma} \dd W_i(t).
\end{equation}
In this paper we fix the number of particles and study the performance of our estimators as $\epl$ vanishes. The very interesting problem of inference for mean field SDEs, obtained in the limit as $d \to \infty$, will be investigated elsewhere. It can be shown (see e.g. \cite[Section 2.1]{GoP18} and \cite{DuP16,DGP21}) that $(X_1^\epl, \dots X_d^\epl)$ converges in law as $\epl$ goes to zero to the solution $(X_1^0, \dots, X_d^0)$ of the homogenized system 
\begin{equation} \label{eq:systemIP_H}
\dd X_i^0(t) = - A X_i^0(t) \dd t  - \frac{\Theta}{d} \sum_{j=1}^d \left( X_i^0(t) - X_j^0(t) \right) \dd t + \sqrt{2\Sigma} \dd W_i(t).
\end{equation}
where $\Theta = K \theta$ and $K$ is defined in \eqref{eq:K_H}. Moreover, the first eigenvalue and eigenfunction of the generator of the homogenized system can be computed explicitly and they are given respectively by
\begin{equation}
\phi_1(x_1, \dots, x_d) = \sum_{i=1}^d x_i \qquad \text{and} \qquad \lambda_1 = A.
\end{equation}
Hence, letting $\Delta>0$ independent of $\epl$, given a sequence of observations $( (\widetilde X_1^\epl)_n, \dots (\widetilde X_d^\epl)_n)_{n=0}^N$, we can express the estimators analytically
\begin{equation}
\begin{aligned}
\widehat A^\epl_{N,1} &= - \frac1\Delta \log \left( \frac{\sum_{n=0}^{N-1} \left( \sum_{i=1}^d (\widetilde X_i^\epl)_n \right) \left( \sum_{i=1}^d (\widetilde X_i^\epl)_{n+1} \right)} {\sum_{n=0}^{N-1} \left( \sum_{i=1}^d (\widetilde X_i^\epl)_n \right)^2} \right), \\
\widetilde A^\epl_{N,1} &= - \frac1\Delta \log \left( \frac{\sum_{n=0}^{N-1} \left( \sum_{i=1}^d (\widetilde Z_i^\epl)_n \right) \left( \sum_{i=1}^d (\widetilde X_i^\epl)_{n+1} \right)} {\sum_{n=0}^{N-1} \left( \sum_{i=1}^d (\widetilde Z_i^\epl)_n \right) \left( \sum_{i=1}^d (\widetilde X_i^\epl)_n \right)} \right).
\end{aligned}
\end{equation}
Let us now set $p(y) = \cos(y)$, $\alpha = 1$, $\sigma = 1$ and $\theta = 1$. We then simulate system \eqref{eq:systemIP_MS} for different final times $T = 100, 200, \dots, 1000$ and approximate the drift coefficient $A$ of the homogenized system \eqref{eq:systemIP_H} for $d = 2$ and $d = 5$. In Figure \ref{fig:exampleIP} and Figure \ref{fig:exampleIP_filter} we plot the results respectively of the estimators $\widehat A^\epl_{N,J}$ with $\Delta = 1$ and $\widetilde A^\epl_{N,J}$ with $\Delta = \epl$ for two different values of $\epl = 0.1, 0.05$. As expected, we observe that our estimator provides a better approximation of the unknown coefficient $A$ when the time $T$ increases and that this value stabilizes after approximately $T = 500$.

\begin{figure}[t]
\centering
\includegraphics[]{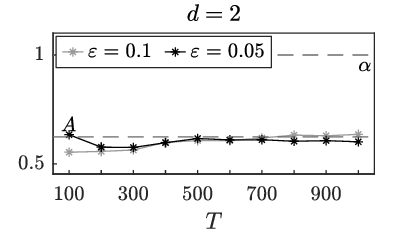}
\includegraphics[]{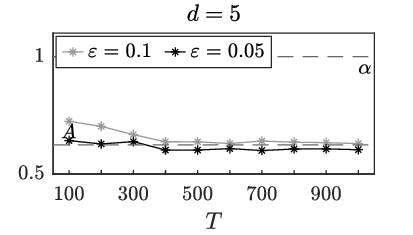}
\caption{Evolution in time of the estimator $\widehat A_{N,J}^\epl$ with $J = 1$ for a $d$-dimensional system of interacting particles with sampling rate $\Delta = 1$.}
\label{fig:exampleIP}
\end{figure}

\begin{figure}[t]
\centering
\includegraphics[]{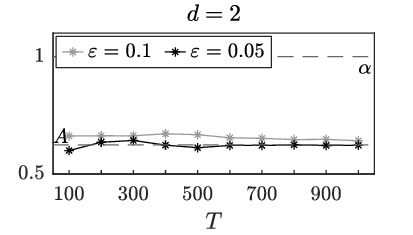}
\includegraphics[]{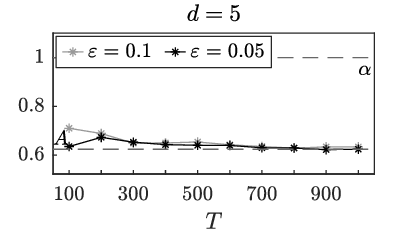}
\caption{Evolution in time of the estimator $\widetilde A_{N,J}^\epl$ with $J = 1$ for a $d$-dimensional system of interacting particles with sampling rate $\Delta = \epl$.}
\label{fig:exampleIP_filter}
\end{figure}

\subsection{Simultaneous inference of drift and diffusion coefficients} \label{sec:num_diff}
	
As highlighted by Remark \ref{rem:estimation_diffusion_coefficient}, a small modification of our methodology allows us to estimate the diffusion coefficient, in addition to drift coefficients. Define the parameter $\theta = \begin{pmatrix} a^\top & s \end{pmatrix}^\top \in \R^{M+1}$, whose exact value is given by $\theta_0 = \begin{pmatrix} A^\top & \Sigma \end{pmatrix}^\top \in \R^{M+1}$, where $A$ and $\Sigma$ are the drift and diffusion coefficients of the homogenized equation, respectively. Then, the eigenvalue problem reads for all $j \in \N$
\begin{equation}
s \phi_j''(x;\theta) - a \cdot V'(x) \phi_j'(x;\theta) + \lambda_j(\theta) \phi_j(x;\theta) = 0,
\end{equation}
where the eigenvalues and eigenfunctions are now dependent on the new parameter $\theta$. Accordingly, also the functions $\{ \beta_j \}_{j=1}^J$ can be chosen dependent on both the drift and diffusion coefficients and, moreover, they have to take values in $\R^{M+1}$, i.e., $\beta_j(\cdot;\theta) \colon \R \to \R^{M+1}$. Therefore, the new score functions $\widehat G^\epl_{N,J}$ and $\widetilde G^\epl_{N,J}$ are defined from $\Theta = \mathcal A \times \mathcal S \subset \R^{M+1}$, which is the set of admissible parameters $\theta$, to $\R^{M+1}$ and thus give nonlinear systems of dimension $M+1$. Finally, the solutions $\widehat \theta_{N,J}^\epl$ and $\widetilde \theta_{N,J}^\epl$ of the systems are the estimators of both the drift and diffusion coefficients of the homogenized equation. In fact, small modifications in the proofs of the main results, in particular in the notation, yield the asymptotic unbiasedness of the estimators under the same conditions, i.e.,
\begin{equation}
\lim_{\epl \to 0} \lim_{N \to \infty} \widehat \theta_{N,J}^\epl = \lim_{\epl \to 0} \lim_{N \to \infty} \widetilde \theta_{N,J}^\epl = \theta_0 = \begin{pmatrix} A^\top & \Sigma \end{pmatrix}^\top, \qquad \text{in probability}.
\end{equation}
Consider now the same setting of Section \ref{sec:sensitivity}, i.e., the multiscale Ornstein-Uhlebeck potential with $V(x) = x^2/2$, $p(y) = \cos(y)$, $\alpha = 1$, $\sigma = 1$ and let us assume that both the drift and diffusion coefficients are unknown. We remark that in this case we have $M = 1$. Then, set the final time $T = 1000$, the sampling rate $\Delta = 1$ and the number of eigenfunctions and eigenvalues $J = 2$. Moreover, we choose the functions $\beta_1(x;\theta) = \beta_2(x;\theta) = \begin{pmatrix} x^2 & x \end{pmatrix}^\top$. Since the distance between two consecutive observations is independent of the multiscale parameter $\epl$, we consider the estimator $\widehat A^\epl_{N,J}$ without filtered data. In Figure \ref{fig:drift&diffusion} we plot the evolution of our estimator varying the number of observations $N$ for two different values of $\epl$, in particular $\epl = 0.1$ and $\epl = 0.05$. We observe that if the multiscale parameter is smaller, then the number of observations needed to obtain a reliable approximation of the unknown parameters is lower.

\begin{figure}[t]
\centering
\includegraphics[]{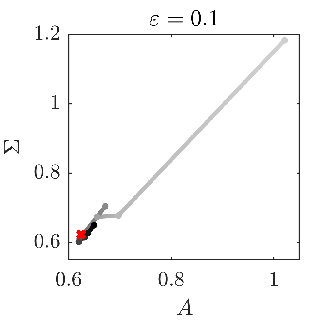}
\includegraphics[]{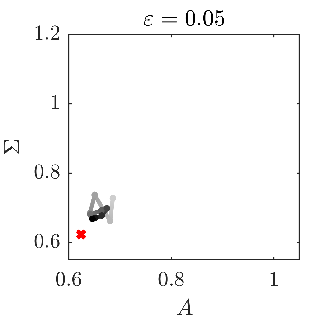}
\hspace{0.25cm}
\includegraphics[]{figures/legend_gray}
\caption{Simultaneous inference of drift and diffusion coefficient for the estimator $\widehat A^\epl_{N,J}$ with $J = 2$.}
\label{fig:drift&diffusion}
\end{figure}

\section{Asymptotic unbiasedness} \label{sec:full_proof}

In this section we prove our main results. The plan of the proof is the following:
\begin{itemize}
\item we first study the limiting behaviour of the score functions $\widehat G_{N,J}^\epl$ and $\widetilde G_{N,J}^\epl$ defined in \eqref{eq:score_function_NOfilter} and \eqref{eq:score_function_YESfilter} as the number of observations $N$ goes to infinity, i.e., as the final time $T$ tends to infinity;
\item we then show the continuity of the limit of the score functions obtained in the previous step and we compute their limits as the multiscale parameter $\epl$ vanishes (Section \ref{sec:proof_continuity_limit});
\item we finally prove our main results, i.e., the asymptotic unbiasedness of the drift estimators (Section \ref{sec:proof_main_results}).
\end{itemize}

We first define the Jacobian matrix of the function $g_j$ introduced in \eqref{eq:def_g} with respect to $a$:
\begin{equation} \label{eq:def_h}
\begin{aligned} 
h_j(x,y,z;a) &= \dot \beta_j(z;a) \left( \phi_j(y,a) - e^{-\lambda_j(a)\Delta}\phi_j(x;a) \right) \\
&\quad + \beta_j(z;a) \otimes \left( \dot \phi_j(y;a) - e^{-\lambda_j(a)\Delta} \left( \dot \phi_j(x;a) - \Delta \dot \lambda_j(a)\phi_j(x,a) \right) \right),
\end{aligned}
\end{equation}
which will be employed in the following and where $\otimes$ denotes the outer product in $\R^M$ and the dot denotes either the Jacobian matrix or the gradient with respect to $a$, e.g. $h_j = \dot g_j$. Then note that, under Assumption \ref{ass:dissipative_setting}, due to ergodicity and stationarity and by \cite[Lemma 3.1]{BiS95} we have
\begin{equation} \label{eq:def_G_hat}
\lim_{N\to\infty} \frac1N \widehat G_{N,J}^\epl(a) = \frac1\Delta \sum_{j=1}^J \E^{\varphi^\epl} \left[ g_j \left( X_0^\epl, X_\Delta^\epl, X_0^\epl; a \right) \right] \eqdef \widehat{\mathcal G}_J(\epl,a),
\end{equation}
and
\begin{equation} \label{eq:def_G_tilde}
\lim_{N\to\infty} \frac1N \widetilde G_{N,J}^\epl(a) = \frac1\Delta \sum_{j=1}^J \E^{\widetilde \rho^\epl} \left[ g_j \left( X_0^\epl, X_\Delta^\epl, \widetilde Z_0^\epl; a \right) \right] \eqdef \widetilde{\mathcal G}_J(\epl,a),
\end{equation}
where $\E^{\varphi^\epl}$ and $\E^{\widetilde \rho^\epl}$ denotes respectively that $X_0^\epl$ and $(X_0^\epl, \widetilde Z_0^\epl)$ are distributed according to their invariant distribution. We remark that the invariant distribution $\widetilde \rho^\epl$ exists due to Lemma \ref{lem:ergodicity_XZ_tilde}. By equation \eqref{eq:def_h} the Jacobian matrices of $\widehat{\mathcal G}_J(\epl,a)$ and $\widetilde{\mathcal G}_J(\epl,a)$ with respect to $a$ are given by
\begin{equation} \label{eq:def_H_hat}
\widehat{\mathcal H}_J(\epl,a) \defeq \frac{\partial}{\partial a}\widehat{\mathcal G}_J(\epl,a) = \frac1\Delta \sum_{j=1}^J \E^{\varphi^\epl} \left[ h_j \left( X_0^\epl, X_\Delta^\epl, X_0^\epl; a \right) \right],
\end{equation}
and
\begin{equation} \label{eq:def_H_tilde}
\widetilde{\mathcal H}_J(\epl,a) \defeq \frac{\partial}{\partial a}\widetilde{\mathcal G}_J(\epl,a) = \frac1\Delta \sum_{j=1}^J \E^{\widetilde \rho^\epl} \left[ h_j \left( X_0^\epl, X_\Delta^\epl, \widetilde Z_0^\epl; a \right) \right].
\end{equation}

\subsection{Continuity of the limit of the score function} \label{sec:proof_continuity_limit}

In this section, we first prove the continuity of the functions $\widehat{\mathcal G}_J, \widetilde{\mathcal G}_J \colon (0,\infty) \times \mathcal A \to \R^M$ and $\widehat{\mathcal H}_J, \widetilde{\mathcal H}_J, \colon (0,\infty) \times \mathcal A \to \R^{M \times M}$. We then study the limit of these functions for $\epl \to 0$. As the proof for the filtered and the non-filtered are similar, we will concentrate on the filtered case and comment on the non-filtered case. Before entering into the proof, we give two preliminary technical lemmas which will be used repeatedly and whose proof can be found respectively in Appendix \ref{app:technical_results} and Appendix \ref{app:approximation_formula}.

\begin{lemma} \label{lem:Ztilde_bounded_moments}
Let $\widetilde Z^\epl$ be defined in \eqref{eq:Z_tilde} and distributed according to the invariant measure $\widetilde \rho^\epl$ of the process $(\widetilde X_n, \widetilde Z_n)$. Then for any $p\ge1$ there exists a constant $C>0$ uniform in $\epl$ such that
\begin{equation}
\E^{\widetilde \rho^\epl} \abs{\widetilde Z^\epl}^p \le C.
\end{equation}  
\end{lemma}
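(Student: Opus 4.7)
The plan is to exploit the linearity of the filter and the explicit form of the invariant density $\varphi^\epl$ to reduce the bound to a uniform moment estimate on $\widetilde X^\epl$ at stationarity. First I would use the recursion implicit in \eqref{eq:Z_tilde_1}, namely $\widetilde Z^\epl_{n+1} = e^{-\Delta}\widetilde Z^\epl_n + \Delta e^{-\Delta} \widetilde X^\epl_n$, to describe the stationary distribution: at stationarity one can equivalently write
\begin{equation}
\widetilde Z^\epl \stackrel{d}{=} \Delta \sum_{k=0}^\infty e^{-\Delta(k+1)} \widetilde X^\epl_{-k},
\end{equation}
where $(\widetilde X^\epl_{-k})_{k\ge 0}$ is a stationary copy of the discrete chain whose marginals coincide with $\varphi^\epl$. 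The series converges in $L^p$ by the estimate carried out next, which retroactively justifies the representation.

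Next I would apply Minkowski's inequality in $L^p(\widetilde\rho^\epl)$ to obtain
\begin{equation}
\bigl(\E^{\widetilde\rho^\epl} |\widetilde Z^\epl|^p\bigr)^{1/p} \le \Delta \sum_{k=0}^\infty e^{-\Delta(k+1)} \bigl(\E^{\varphi^\epl} |\widetilde X^\epl|^p\bigr)^{1/p} = \frac{\Delta\, e^{-\Delta}}{1 - e^{-\Delta}}\,\bigl(\E^{\varphi^\epl} |\widetilde X^\epl|^p\bigr)^{1/p}.
\end{equation}
The prefactor is bounded by $1$ for all $\Delta>0$, so it suffices to bound $\E^{\varphi^\epl} |X|^p$ uniformly in $\epl$.

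For the moment bound on $\varphi^\epl$, I would use its explicit form \eqref{eq:invariant_measure_Xe} together with the boundedness of $p$ from Assumption \ref{ass:dissipative_setting}. Since $|p(x/\epl)| \le \|p\|_\infty$, the density satisfies the pointwise sandwich
\begin{equation}
\frac{e^{-\|p\|_\infty/\sigma}}{\widetilde C} e^{-\alpha\cdot V(x)/\sigma} \le \varphi^\epl(x) \le \frac{e^{\|p\|_\infty/\sigma}}{\widetilde C} e^{-\alpha\cdot V(x)/\sigma},
\end{equation}
where $\widetilde C = \int_\R e^{-\alpha\cdot V(x)/\sigma}\dd x$; the normalising constant $C_{\varphi^\epl}$ is itself trapped between $e^{-\|p\|_\infty/\sigma}\widetilde C$ and $e^{\|p\|_\infty/\sigma}\widetilde C$. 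Therefore, for any $p\ge 1$,
\begin{equation}
\E^{\varphi^\epl}|X|^p \le e^{2\|p\|_\infty/\sigma}\, \widetilde C^{-1}\int_\R |x|^p e^{-\alpha\cdot V(x)/\sigma}\dd x,
\end{equation}
and the right-hand side is finite and independent of $\epl$ because the dissipativity bound $-b_1 + b_2 x^2 \le \alpha\cdot V'(x) x$ in Assumption \ref{ass:dissipative_setting}(ii) forces $\alpha\cdot V$ to grow at least quadratically, making $e^{-\alpha\cdot V/\sigma}$ integrable against any polynomial. Combining the three displays yields $\E^{\widetilde\rho^\epl}|\widetilde Z^\epl|^p\le C$ with $C$ independent of $\epl$.

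The only genuine subtlety is justifying that $\widetilde\rho^\epl$ really is the law of the infinite series above, which is not strictly needed for the bound: one may instead argue with the finite partial sum $\widetilde Z^\epl_n = \Delta\sum_{k=0}^{n-1}e^{-\Delta(n-k)}\widetilde X^\epl_k$ started at stationarity of $\widetilde X^\epl$, apply Minkowski to obtain the same geometric bound uniformly in $n$, and then pass to the limit using the existence of $\widetilde\rho^\epl$ from Lemma \ref{lem:ergodicity_XZ_tilde} and weak convergence of the marginals. This second route is the one I would actually write down, as it avoids any \emph{a priori} discussion of a two-sided stationary extension.
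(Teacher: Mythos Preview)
Your argument is correct and takes a genuinely different route from the paper. The paper proves the lemma by comparison with the continuous filtered process $Z^\epl$: it writes $\abs{\widetilde Z^\epl}^p \le 2^{p-1}\abs{Z^\epl}^p + 2^{p-1}\abs{\widetilde Z^\epl - Z^\epl}^p$, bounds the first term by the moment estimate for $Z^\epl$ established in \cite[Lemma~C.1]{AGP20}, and bounds the second via Corollary~\ref{cor:asymptotic_distances_Z} (which in turn rests on Lemma~\ref{lem:distance_Z_Ztilde}). Your approach bypasses the continuous filter entirely, exploiting instead the linear structure of the discrete recursion through Minkowski's inequality and the explicit sandwich on $\varphi^\epl$; this is more self-contained and avoids importing results from \cite{AGP20}. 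The paper's route, by contrast, recycles machinery it needs anyway for Proposition~\ref{pro:continuity_G_tilde_0_De}, so the comparison lemma serves double duty there. One small point in your second route: weak convergence of $\widetilde Z_n^\epl$ to $\widetilde\rho^\epl$ does not by itself pass the $p$-th moment bound to the limit, but since your Minkowski estimate holds for every exponent, uniform integrability (or simply lower semicontinuity of $z\mapsto\abs{z}^p$ under weak convergence) closes the gap immediately.
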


\begin{lemma} \label{lem:expansion_fXD}
Let $f \colon \R \to \R$ be a $\mathcal C^\infty(\R)$ function which is polynomially bounded along with all its derivatives. Then
\begin{equation}
f(X_\Delta^\epl) = f(X_0^\epl) - A \cdot V'(X_0^\epl) f'(X_0^\epl) \Delta + \Sigma f''(X_0^\epl) \Delta + \sqrt{2\sigma} \int_0^\Delta f'(X_t^\epl) (1+\Phi'(Y_t^\epl)) \dd W_t + R(\epl,\Delta),
\end{equation}
where $R(\epl,\Delta)$ satisfies for all $p\ge1$ and for a constant $C>0$ independent of $\Delta$ and $\epl$
\begin{equation}
\left( \E^{\varphi^\epl} \abs{R(\epl,\Delta)}^p \right)^{1/p} \le C(\epl + \Delta^{3/2}).
\end{equation}
\end{lemma}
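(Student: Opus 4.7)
The strategy is to apply Itô's formula to $f$ after augmenting it with a corrector built from the cell problem \eqref{eq:cell_problem}, so that the singular drift $-\tfrac{1}{\epl} p'(Y_t^\epl) f'(X_t^\epl)$ (with $Y_t^\epl \defeq X_t^\epl/\epl$) produced by a naive Itô expansion is eliminated and the homogenised drift emerges. Concretely, I would apply Itô's formula to $f(X_t^\epl) + \epl \Phi(Y_t^\epl) f'(X_t^\epl)$: using the identity $\sigma \Phi'' - p' \Phi' = p'$, the $O(\epl^{-1})$ contributions coming from $(\dd Y_t^\epl)^2$ and the cross term cancel the singular drift exactly. What remains is (i) the stated martingale $\sqrt{2\sigma}\int_0^\Delta f'(X_t^\epl)(1+\Phi'(Y_t^\epl)) \dd W_t$; (ii) a drift $\int_0^\Delta \Psi(X_t^\epl, Y_t^\epl) \dd t$ with $\Psi(x,y) = -\alpha \cdot V'(x) f'(x)(1+\Phi'(y)) + f''(x)(\sigma + 2\sigma \Phi'(y) - p'(y)\Phi(y))$; (iii) a boundary term $\epl[\Phi(Y_0^\epl) f'(X_0^\epl) - \Phi(Y_\Delta^\epl) f'(X_\Delta^\epl)]$; and (iv) additional drift and martingale contributions of relative order $\epl$ coming from the $x$-derivatives of the corrector.

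To identify $-A \cdot V'(X_0^\epl) f'(X_0^\epl) \Delta + \Sigma f''(X_0^\epl) \Delta$ inside the drift term, I would next compute the $\mu$-average $\overline\Psi$: integration by parts against $\mu$ combined with the cell problem yields $\int (1+\Phi')\dd\mu = K$ and $\int (\sigma + 2\sigma \Phi' - p'\Phi)\dd\mu = \sigma K = \Sigma$, so that $\overline\Psi(x) = -A \cdot V'(x) f'(x) + \Sigma f''(x)$. A second corrector $\chi(x,\cdot)$ solving the Poisson equation $\mathcal{L}_0 \chi(x,\cdot) = \overline\Psi(x) - \Psi(x,\cdot)$ (well-posed by the Fredholm alternative, since the right-hand side is $\mu$-centred) then allows me to rewrite the fluctuation $\int_0^\Delta [\Psi - \overline\Psi](X_t^\epl, Y_t^\epl) \dd t$, via Itô's formula applied to $\epl^2 \chi(X_t^\epl, Y_t^\epl)$, as $\epl^2$-boundary terms plus an $O(\epl \Delta)$ drift and an $O(\epl \sqrt\Delta)$ martingale. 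The remaining frozen-in-$x$ error $\int_0^\Delta[\overline\Psi(X_t^\epl) - \overline\Psi(X_0^\epl)] \dd t$ is of order $\Delta^{3/2}$ in $L^p$ by combining the Hölder estimate $\|X_t^\epl - X_0^\epl\|_{L^p(\varphi^\epl)} \le C \sqrt t$ (uniform in $\epl$) with the polynomial growth of $\overline\Psi'$.

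Collecting all contributions, I would bound the remainder $R(\epl, \Delta)$ in $L^p(\varphi^\epl)$ term by term: the boundary contributions yield the $O(\epl)$ part, using boundedness of $\Phi$, polynomial growth of $f'$ and $\chi$, and uniform-in-$\epl$ moment bounds on $X_t^\epl$ under $\varphi^\epl$; the $O(\epl \Delta)$ drift contributions and the $O(\epl \sqrt \Delta)$ martingale corrections (controlled via Burkholder--Davis--Gundy) are both dominated by $C(\epl + \Delta^{3/2})$ through $\epl \Delta \le \tfrac{1}{2}(\epl^2 + \Delta^2)$ and $\epl \sqrt\Delta \le \tfrac{1}{2}(\epl^2 + \Delta)$. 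The uniform moment bounds and the smoothness with polynomial growth of $\Phi, \chi$ and their derivatives follow from stationarity under Assumption \ref{ass:dissipative_setting}\ref{ass:regularity_diss} and from standard elliptic regularity on the torus.

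The main obstacle is the Hölder-in-time estimate $\|X_t^\epl - X_0^\epl\|_{L^p(\varphi^\epl)} \le C \sqrt t$ uniformly in $\epl$, since the singular drift $\epl^{-1} p'(Y_t^\epl)$ precludes a direct moment estimate for the SDE. Using once more the cell problem, however, I would rewrite $\int_0^t \tfrac{1}{\epl} p'(Y_s^\epl) \dd s$ as $\epl [\Phi(Y_t^\epl) - \Phi(Y_0^\epl)]$ plus a drift bounded uniformly in $\epl$ and a martingale of variance $O(t)$, which yields the required Hölder bound by stationarity and closes the argument.
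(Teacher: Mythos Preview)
Your approach is correct and is the standard two-scale corrector method: augment $f$ by $\epl\,\Phi(Y)f'(X)$ to kill the singular drift, then use a second Poisson corrector $\chi$ to replace $\Psi$ by its cell average $\overline\Psi$. The paper proceeds differently in presentation: it introduces the auxiliary process $S_t^\epl$ with $\dd S_t^\epl = -\alpha\cdot V'(X_t^\epl)(1+\Phi'(Y_t^\epl))\,\dd t + \sqrt{2\sigma}\,(1+\Phi'(Y_t^\epl))\,\dd W_t$, notes (from \cite{PaS07}) that $|X_\Delta^\epl - S_\Delta^\epl|\le C\epl$, and applies It\^o to $f(S_t^\epl)$; this produces exactly the stated martingale and two drift integrals $\int_0^\Delta \alpha\cdot V' f'(1+\Phi')\,\dd t$ and $\int_0^\Delta \sigma f''(1+\Phi')^2\,\dd t$, which are reduced to $A\cdot V'(X_0^\epl)f'(X_0^\epl)\Delta$ and $\Sigma f''(X_0^\epl)\Delta$ via two short lemmas that freeze $X_t^\epl$ at $X_0^\epl$ (using the H\"older estimate you also invoke) and average the fast variable through \cite[Lemma~5.6]{PaS07}. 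Conceptually the auxiliary process $S_t^\epl$ encodes your first corrector, and \cite[Lemma~5.6]{PaS07} encodes your second corrector $\chi$; the paper's route is shorter because it imports these two ingredients as black boxes, whereas yours is more self-contained.

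One small slip: the Young-inequality bound $\epl\sqrt\Delta \le \tfrac12(\epl^2+\Delta)$ does \emph{not} yield $\le C(\epl+\Delta^{3/2})$, since $\Delta$ is not controlled by $\Delta^{3/2}$ for small $\Delta$. The trivial bound $\epl\sqrt\Delta \le \epl$ (for $\Delta\le 1$) suffices instead; similarly $\epl\Delta\le \epl$. This fixes the only loose estimate in your outline.
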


We start here with a continuity result for the score function and its Jacobian matrix with respect to the unknown parameter.

\begin{proposition} \label{pro:continuity_G_tilde}
Under Assumption \ref{ass:beta_functions}, the functions $\widetilde{\mathcal G}_J \colon (0,\infty) \times \mathcal A \to \R^M$ and $\widetilde{\mathcal H}_J, \colon (0,\infty) \times \mathcal A \to \R^{M \times M}$ defined in \eqref{eq:def_G_tilde} and \eqref{eq:def_H_tilde}, where $\Delta$ can be either independent of $\epl$ or $\Delta=\epl^\zeta$ with $\zeta>0$, are continuous.
\end{proposition}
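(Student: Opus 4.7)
Since $\widetilde{\mathcal H}_J$ has exactly the same structure as $\widetilde{\mathcal G}_J$, with $g_j$ replaced by the Jacobian $h_j$ from \eqref{eq:def_h} which enjoys the same qualitative properties (eigenfunction/eigenvalue evaluations multiplied by functions with the polynomial growth of Assumption \ref{ass:beta_functions}), it suffices to prove continuity for $\widetilde{\mathcal G}_J$ and then repeat the argument. Fix a point $(\epl_0, a_0) \in (0,\infty) \times \mathcal A$ and a small compact neighbourhood $I \times K$ of it contained in $(0,\infty) \times \mathcal A$. I will show sequential continuity: for any sequence $(\epl_n, a_n) \to (\epl_0, a_0)$ in $I \times K$, each summand $(\epl, a) \mapsto \E^{\widetilde \rho^\epl}[g_j(X_0^\epl, X_\Delta^\epl, \widetilde Z_0^\epl; a)]$ converges.

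\textbf{Step 1 (continuity in $a$ at fixed $\epl$).} For $\epl > 0$ fixed, the spectral problem \eqref{eq:eigen_problem_equation} is a one-scale Sturm--Liouville problem smoothly parametrized by $a$. Standard perturbation theory for self-adjoint operators (after the usual unitary transformation to a Schrödinger operator mentioned in the text) yields that $\lambda_j(a)$ and the $L^2(\varphi_a)$-normalized eigenfunction $\phi_j(\cdot; a)$, together with its first two spatial derivatives, are continuous in $a$, uniformly on compact subsets of $\R$. Combined with continuity of $\beta_j(\cdot; a)$ (Assumption \ref{ass:beta_functions}(i)), $g_j(x, y, z; a)$ is jointly continuous in $(x, y, z, a)$ and, by Assumption \ref{ass:beta_functions}(ii)(iii), polynomially bounded in $(x, y, z)$ \emph{uniformly} for $a \in K$.

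\textbf{Step 2 (continuity in $\epl$ at fixed $a$).} By Assumption \ref{ass:dissipative_setting} the joint process $(X_t^\epl, Z_t^\epl)$ is geometrically ergodic with invariant density $\rho^\epl$ (Lemma \ref{lem:ergodicity_XZ_tilde}), and the discrete skeleton $(\widetilde X_n^\epl, \widetilde Z_n^\epl)$ inherits an invariant distribution $\widetilde \rho^\epl$ whose moments are controlled uniformly on $I$ thanks to Lemma \ref{lem:Ztilde_bounded_moments} and the standard uniform-in-$\epl$ moment bounds for $X_t^\epl$ under the dissipative assumption. As $\epl_n \to \epl_0 > 0$, the coefficients of the SDE \eqref{eq:SDE_MS} converge smoothly, so by classical SDE perturbation theory the laws of $(X_t^{\epl_n})_{t \in [0, \Delta_n + 1]}$ converge weakly to that of $(X_t^{\epl_0})_{t \in [0, \Delta_0 + 1]}$; combined with the continuous dependence $\Delta_n \to \Delta_0$ (which holds trivially when $\Delta$ is independent of $\epl$ and by continuity of $\epl \mapsto \epl^\zeta$ otherwise), and with the uniform-in-$\epl$ spectral gap (so the invariant measures $\widetilde \rho^{\epl_n}$ converge weakly to $\widetilde \rho^{\epl_0}$), one obtains that the joint law of $(X_0^{\epl_n}, X_{\Delta_n}^{\epl_n}, \widetilde Z_0^{\epl_n})$ under $\widetilde \rho^{\epl_n}$ converges weakly to the corresponding law at $\epl_0$.

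\textbf{Step 3 (combine and conclude).} I estimate
\begin{equation*}
\bigl\lvert \widetilde{\mathcal G}_J(\epl_n, a_n) - \widetilde{\mathcal G}_J(\epl_0, a_0) \bigr\rvert
\le \bigl\lvert \widetilde{\mathcal G}_J(\epl_n, a_n) - \widetilde{\mathcal G}_J(\epl_n, a_0) \bigr\rvert + \bigl\lvert \widetilde{\mathcal G}_J(\epl_n, a_0) - \widetilde{\mathcal G}_J(\epl_0, a_0) \bigr\rvert.
\end{equation*}
For the first term, the uniform polynomial bound on $g_j(\cdot; a)$ from Step 1 together with uniform-in-$\epl$ moment bounds (Lemma \ref{lem:Ztilde_bounded_moments} and standard estimates) furnish a uniformly integrable envelope, so dominated convergence applied to pointwise continuity in $a$ gives vanishing of the first term as $a_n \to a_0$, uniformly in $\epl \in I$. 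For the second term, the weak convergence of Step 2, together with the same uniform moment/polynomial-growth envelope needed to upgrade weak convergence to convergence of expectations of polynomially bounded continuous functions, yields convergence to zero. This proves joint sequential continuity of $\widetilde{\mathcal G}_J$ at $(\epl_0, a_0)$; the same scheme applied to $h_j$ yields continuity of $\widetilde{\mathcal H}_J$. The \textbf{main technical obstacle} is verifying the uniform-in-$\epl$ integrability used to pass from weak convergence of laws to convergence of expectations of the polynomially growing integrands $g_j$ and $h_j$; this is handled by the uniform moment bounds for $X^\epl$ (from the dissipative Assumption \ref{ass:dissipative_setting}) and for $\widetilde Z^\epl$ (Lemma \ref{lem:Ztilde_bounded_moments}), which control one extra polynomial power and thus give the required uniform integrability.
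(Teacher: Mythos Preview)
Your proposal is correct and follows the same high-level scheme as the paper: a triangle-inequality split into an $a$-variation term and an $\epl$-variation term, each handled separately. The execution of each step differs somewhat, however.

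For the $a$-variation, the paper uses the mean value theorem to obtain the explicit Lipschitz-type bound $Q_1(\epl,a)\le\frac{C}{\Delta}\norm{a-a^*}$, whereas you invoke dominated convergence with a uniform polynomial envelope. Both are fine; the paper's bound is more quantitative but this extra information is not used later.

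For the $\epl$-variation the difference is more substantial. You argue entirely by weak convergence of the joint law $(X_0^{\epl_n},X_{\Delta_n}^{\epl_n},\widetilde Z_0^{\epl_n})$ under $\widetilde\rho^{\epl_n}$ plus uniform integrability, absorbing the dependence $\Delta=\epl^\zeta$ into the continuity $\Delta_n\to\Delta_0$. The paper instead treats the case $\Delta=\epl^\zeta$ by an explicit It\^o expansion of $\phi_j(X_\Delta^\epl;a^*)$ between $\Delta^*$ and $\Delta$, bounding the remainder by $\frac{C}{\Delta}\bigl((\norm{\alpha}+\sigma+\epl^{-1})(\Delta-\Delta^*)+\sqrt{2\sigma}(\Delta-\Delta^*)^{1/2}\bigr)$ and then invoking continuity of the SDE in the parameter (citing \cite{Kry09,MPP10}). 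Your route is cleaner; the paper's is more hands-on and gives explicit rates. One small imprecision in your write-up: ``uniform-in-$\epl$ spectral gap'' is not by itself the mechanism for $\widetilde\rho^{\epl_n}\Rightarrow\widetilde\rho^{\epl_0}$; what you actually need (and have) is tightness from the uniform moment bounds together with the fact that any weak limit must be invariant for the limiting dynamics, which has a unique invariant measure. This is standard and does not affect the validity of your argument.
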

\begin{proof}
We only prove the statement for $\widetilde{\mathcal G}_J$, then the argument is similar for $\widetilde{\mathcal H}_J$. Letting $\epl^*\in(0,\infty)$ and $a^*\in\mathcal A$, we want to show that
\begin{equation}
\lim_{(\epl,a) \to (\epl^*,a^*)} \norm{\widetilde{\mathcal G}_J(\epl,a) - \widetilde{\mathcal G}_J(\epl^*,a^*)} = 0.
\end{equation}
By the triangle inequality we have
\begin{equation}
\norm{\widetilde{\mathcal G}_J(\epl,a) - \widetilde{\mathcal G}_J(\epl^*,a^*)} \le \norm{\widetilde{\mathcal G}_J(\epl,a) - \widetilde{\mathcal G}_J(\epl,a^*)} + \norm{\widetilde{\mathcal G}_J(\epl,a^*) - \widetilde{\mathcal G}_J(\epl^*,a^*)} \eqdef Q_1(\epl,a) + Q_2(\epl),
\end{equation}
then we divide the proof in two steps and we show that the two terms vanish. \\
\textbf{Step 1:} $Q_1(\epl,a) \to 0$ as $(\epl,a) \to (\epl^*,a^*)$. \\
Since $\beta_j$ and $\phi_j$ are continuously differentiable with respect to $a$ for all $j=1,\dots,J$ respectively due to Assumption \ref{ass:beta_functions} and Lemma \ref{lem:regularity_eigen_a}, then also $g_j$ is continuously differentiable with respect to $a$. Therefore, by the mean value theorem for vector-valued functions we have 
\begin{equation}
\begin{aligned}
Q_1(\epl,a) &\le \frac1\Delta \sum_{j=1}^J \norm{\E^{\widetilde \rho^\epl} \left[ g_j(X_0^\epl, X_\Delta^\epl, \widetilde Z_0^\epl; a) \right] - \E^{\widetilde \rho^\epl} \left[ g_j(X_0^\epl, X_\Delta^\epl, \widetilde Z_0^\epl; a^*) \right]} \\
&= \frac1\Delta \sum_{j=1}^J \norm{\int_0^1 \E^{\widetilde \rho^\epl} \left[ h_j(X_0^\epl, X_\Delta^\epl, \widetilde Z_0^\epl; a^* + t(a-a^*)) \right] \dd t \; (a-a^*)}.
\end{aligned}
\end{equation}
Then, letting $C>0$ be a constant independent of $\epl$, since $\beta_j$ and $\phi_j$ are polynomially bounded still by Assumption \ref{ass:beta_functions} and $X_0^\epl$, $X_\Delta^\epl$ and $\widetilde Z_0^\epl$ have bounded moments of any order by \cite[Corollary 5.4]{PaS07} and Lemma \ref{lem:Ztilde_bounded_moments}, we obtain
\begin{equation}
Q_1(\epl,a) \le \frac C\Delta \norm{a-a^*},
\end{equation}
which implies that $Q_1(\epl,a)$ vanishes as $(\epl,a)$ goes to $(\epl^*,a^*)$ both if $\Delta$ is independent of $\epl$ and if $\Delta=\epl^\xi$. \\
\textbf{Step 2:} $Q_2(\epl) \to 0$ as $\epl \to \epl^*$. \\
If $\Delta$ is independent of $\epl$, then we have
\begin{equation}
\begin{aligned}
\lim_{\epl\to\epl^*} Q_2(\epl) &= \lim_{\epl\to\epl^*} \norm{\frac1\Delta \sum_{j=1}^J \E^{\widetilde \rho^\epl} \left[ g_j(X_0^\epl, X_\Delta^\epl, \widetilde Z_0^\epl; a^*) \right] - \frac1\Delta \sum_{j=1}^J \E^{\widetilde \rho^{\epl^*}} \left[ g_j(X_0^{\epl^*}, X_\Delta^{\epl^*}, \widetilde Z_0^{\epl^*}; a^*) \right]} \\
&\le \lim_{\epl\to\epl^*} \frac1\Delta \sum_{j=1}^J \norm{\E^{\widetilde \rho^\epl} \left[ g_j(X_0^\epl, X_\Delta^\epl, \widetilde Z_0^\epl; a^*) \right] - \E^{\widetilde \rho^{\epl^*}} \left[ g_j(X_0^{\epl^*}, X_\Delta^{\epl^*}, \widetilde Z_0^{\epl^*}; a^*) \right]},
\end{aligned}
\end{equation}
and the right hand side vanishes due to the continuity of $g_j$ for all $j=1,\dots,J$ and the continuity of the solution of a stochastic differential equation with respect to a parameter (see \cite[Theorem 2.8.1]{Kry09}). Let us now consider the case $\Delta = \epl^\zeta$ with $\zeta>0$ and let us assume, without loss of generality, that $\epl>\epl^*$. Denoting $\Delta^* = (\epl^*)^\zeta$ and applying Itô's lemma we have for all $j=1,\dots,J$
\begin{equation}
\begin{aligned}
\phi_j(X_\Delta^\epl;a^*) &= \phi_j(X_{\Delta^*}^{\epl};a^*) - \alpha \cdot \int_{\Delta^*}^\Delta V'(X_t^\epl) \phi_j'(X_t^\epl;a^*) \dd t - \frac1\epl \int_{\Delta^*}^\Delta \phi_j'(X_t^\epl;a^*) p' \left( \frac{X_t^\epl}{\epl} \right) \dd t \\
&\quad + \sigma \int_{\Delta^*}^\Delta \phi_j''(X_t^\epl;a^*) \dd t + \sqrt{2\sigma} \int_{\Delta^*}^\Delta \phi_j'(X_t^\epl;a^*) \dd W_t,
\end{aligned}
\end{equation}
then we can write
\begin{equation}
\widetilde {\mathcal G}_J(\epl,a^*) = \frac1\Delta \sum_{j=1}^J \left( \E^{\widetilde \rho^\epl} \left[ \beta_j(\widetilde Z_0^\epl;a^*) \phi_j(X_{\Delta^*}^\epl;a^*) \right] - e^{-\lambda(a^*)\Delta} \E^{\widetilde \rho^\epl} \left[ \beta_j(\widetilde Z_0^\epl;a^*) \phi_j(X_0^\epl;a^*) \right] \right) + R(\epl), 
\end{equation}
where $R(\epl)$ is given by
\begin{equation}
\begin{aligned}
R(\epl) &= -\frac1\Delta \sum_{j=1}^J \int_{\Delta^*}^\Delta \E^{\widetilde \rho^\epl} \left[ \beta_j(\widetilde Z_0^\epl;a^*) \phi_j'(X_t^\epl;a^*) \alpha \cdot V'(X_t^\epl) \right] \dd t \\
&\quad -\frac{1}{\epl\Delta} \sum_{j=1}^J \int_{\Delta^*}^\Delta \E^{\widetilde \rho^\epl} \left[ \beta_j(\widetilde Z_0^\epl;a^*) \phi_j'(X_t^\epl;a^*) p' \left( \frac{X_t^\epl}{\epl} \right) \right] \dd t \\
&\quad + \frac\sigma\Delta \int_{\Delta^*}^\Delta \E^{\widetilde \rho^\epl} \left[ \beta_j(\widetilde Z_0^\epl;a^*) \phi_j''(X_t^\epl;a^*) \right] \dd t + \frac{\sqrt{2\sigma}}{\Delta} \sum_{j=1}^J \E^{\widetilde \rho^\epl} \left[ \int_{\Delta^*}^\Delta \beta_j(\widetilde Z_0^\epl;a^*) \phi_j'(X_t^\epl;a^*) \dd W_t \right].
\end{aligned}
\end{equation}
Let $C>0$ be independent of $\epl$ and notice that since $p'$ is bounded, $\beta_j,\phi_j',\phi_j'',V'$ are polynomially bounded and $X_t^\epl$ and $\widetilde Z_0^\epl$ have bounded moments of any order by \cite[Corollary 5.4]{PaS07} and Lemma \ref{lem:Ztilde_bounded_moments}, applying Hölder's inequality we obtain
\begin{equation} \label{eq:bound_RDelta_star}
\abs{R(\epl)} \le \frac C\Delta \left( \norm{\alpha} + \sigma + \frac1\epl \right) (\Delta - \Delta^*) + \frac C\Delta \sqrt{2\sigma} (\Delta - \Delta^*)^{1/2}.
\end{equation} 
Therefore, by the continuity of the solution of a stochastic differential equation with respect to a parameter (see \cite{MPP10}) and due to the bound \eqref{eq:bound_RDelta_star}, we deduce that
\begin{equation}
\lim_{\epl\to\epl^*} \widetilde{\mathcal G}_J(\epl,a^*) = \frac{1}{\Delta^*} \sum_{j=1}^J \E^{\widetilde \rho^{\epl^*}} \left[ \beta_j(\widetilde Z_0^{\epl^*};a^*) \left( \phi_j(X_{\Delta^*}^{\epl^*};a^*) - e^{-\lambda(a^*)\Delta^*} \phi_j(X_0^{\epl^*};a^*) \right) \right] = \widetilde{\mathcal G}_J(\epl^*,a^*),
\end{equation}
which implies that $Q_2(\epl)$ vanishes as $\epl$ goes to $\epl^*$ and concludes the proof.
\end{proof}

\begin{remark} \label{rem:continuity_G_tilde}
Notice that the proof of Proposition \ref{pro:continuity_G_tilde} can be repeated analogously for the functions $\widehat{\mathcal G}_J \colon (0,\infty) \times \mathcal A \to \R^M$ and $\widehat{\mathcal H}_J \colon (0,\infty) \times \mathcal A \to \R^{M \times M}$ without filtered data in order to prove their continuity.
\end{remark}

Next we study the limit as $\epl$ vanishes and we divide the analysis in two cases. In particular, we consider $\Delta$ independent of $\epl$ and $\Delta=\epl^\zeta$ with $\zeta>0$. In the first case (Proposition \ref{pro:continuity_G_tilde_0_D}) data are sampled at the homogenized regime ignoring the fact that the they are generated by a multiscale model, while in the second case (Proposition \ref{pro:continuity_G_tilde_0_De}) the distance between two consecutive observations is proportional to the multiscale parameter and thus data are sampled at the multiscale regime preserving the multiscale structure of the full path.

\begin{proposition} \label{pro:continuity_G_tilde_0_D}
Let the functions $\widetilde{\mathcal G}_J \colon (0,\infty) \times \mathcal A \to \R^M$ and $\widetilde{\mathcal H}_J, \colon (0,\infty) \times \mathcal A \to \R^{M \times M}$ be defined in \eqref{eq:def_G_tilde} and \eqref{eq:def_H_tilde} and let $\Delta$ be independent of $\epl$. Under Assumption \ref{ass:beta_functions} and for any $a^* \in \mathcal A$ we have
\begin{equation}
\begin{aligned}
(i)& \lim_{(\epl,a) \to (0,a^*)} \widetilde{\mathcal G}_J(\epl,a) = \frac1\Delta \sum_{j=1}^J \E^{\widetilde \rho^0} \left[ g_j \left( X_0^0, X_\Delta^0, \widetilde Z_0^0; a^* \right) \right], \\
(ii)& \lim_{(\epl,a) \to (0,a^*)} \widetilde{\mathcal H}_J(\epl,a) = \frac1\Delta \sum_{j=1}^J \E^{\widetilde \rho^0} \left[ h_j \left( X_0^0, X_\Delta^0, \widetilde Z_0^0; a^* \right) \right].
\end{aligned}
\end{equation}
\end{proposition}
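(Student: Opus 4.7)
The plan is to mirror the structure of Proposition~\ref{pro:continuity_G_tilde}, splitting the joint limit into an ``$a$-part'' and an ``$\epl$-part'' by the triangle inequality
\begin{equation}
\norm{\widetilde{\mathcal G}_J(\epl,a) - \widetilde{\mathcal G}_J(0,a^*)} \le \norm{\widetilde{\mathcal G}_J(\epl,a) - \widetilde{\mathcal G}_J(\epl,a^*)} + \norm{\widetilde{\mathcal G}_J(\epl,a^*) - \widetilde{\mathcal G}_J(0,a^*)},
\end{equation}
and proving that each term tends to zero, where the target value $\widetilde{\mathcal G}_J(0,a^*)$ is interpreted as the right-hand side of (i). For $\widetilde{\mathcal H}_J$ the argument is identical with $h_j$ in place of $g_j$, so I will focus on (i).

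For the first term, I would re-run Step~1 of the proof of Proposition~\ref{pro:continuity_G_tilde} verbatim: the mean-value theorem applied to the $\mathcal{C}^1$-in-$a$ integrand together with the polynomial growth of $\beta_j,\dot\beta_j,\phi_j,\dot\phi_j$ (Assumption~\ref{ass:beta_functions} and Lemma~\ref{lem:regularity_eigen_a}), plus the uniform (in $\epl$) moment bounds on $X^\epl$ from \cite[Corollary~5.4]{PaS07} and on $\widetilde Z^\epl$ from Lemma~\ref{lem:Ztilde_bounded_moments}, yield $\norm{\widetilde{\mathcal G}_J(\epl,a) - \widetilde{\mathcal G}_J(\epl,a^*)} \le (C/\Delta)\norm{a-a^*}$ with $C$ independent of $\epl$. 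This bound is what justifies taking the joint limit $(\epl,a)\to(0,a^*)$ rather than iterated limits in the $a$-direction.

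The main work is the second term, where $a^*$ is fixed and $\epl \to 0$ with $\Delta$ independent of $\epl$. Here I would invoke two ingredients. First, homogenization for the joint system \eqref{eq:systemSDE_MS}--\eqref{eq:systemSDE_H} implies convergence in law of $(X_t^\epl,Z_t^\epl)$ to $(X_t^0,Z_t^0)$ in $\mathcal C^0([0,T];\R^2)$, from which one deduces (using geometric ergodicity and uniqueness of the invariant measures, together with uniform-in-$\epl$ moment bounds that give tightness) weak convergence of the invariant measures $\widetilde \rho^\epl \rightharpoonup \widetilde \rho^0$, and hence weak convergence of the joint law of $(X_0^\epl,X_\Delta^\epl,\widetilde Z_0^\epl)$ under stationary initialisation. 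Second, since $g_j$ is polynomially bounded in its arguments (Assumption~\ref{ass:beta_functions}) and the three random variables $X_0^\epl,X_\Delta^\epl,\widetilde Z_0^\epl$ have moments of all orders bounded uniformly in $\epl$, the family $\{g_j(X_0^\epl,X_\Delta^\epl,\widetilde Z_0^\epl;a^*)\}_{\epl>0}$ is uniformly integrable. Combining weak convergence with uniform integrability yields
\begin{equation}
\lim_{\epl\to 0} \E^{\widetilde\rho^\epl}\!\left[g_j(X_0^\epl,X_\Delta^\epl,\widetilde Z_0^\epl;a^*)\right] = \E^{\widetilde\rho^0}\!\left[g_j(X_0^0,X_\Delta^0,\widetilde Z_0^0;a^*)\right],
\end{equation}
for each $j=1,\dots,J$, and summing over $j$ gives the desired limit.

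The delicate step is the weak convergence of the stationary distributions $\widetilde \rho^\epl$ together with enough uniform moment control to pass to the limit in the (unbounded, polynomially growing) test function $g_j$. One cannot simply appeal to homogenization at fixed $T$ for a fixed initial condition, since one needs the limit jointly at times $0$ and $\Delta$ under stationarity. I would handle this either (a) by exploiting the exponential mixing rate of $(X^\epl,Z^\epl)$ uniformly in $\epl$ to compare expectations under $\widetilde\rho^\epl$ with expectations from an arbitrary fixed initial condition at a large time $T$ and then applying the path-space homogenization result, or (b) by a direct tightness + identification argument for $\widetilde\rho^\epl$, exploiting the explicit form of the fast invariant density and the uniform moment estimates. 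Either route suffices, and the polynomial growth of $g_j$ is absorbed by Lemma~\ref{lem:Ztilde_bounded_moments} and \cite[Corollary~5.4]{PaS07}.
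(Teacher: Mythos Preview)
Your proposal is correct and follows essentially the same structure as the paper's proof: the same triangle-inequality split into an $a$-part (handled by Step~1 of Proposition~\ref{pro:continuity_G_tilde}) and an $\epl$-part (handled by weak convergence of the stationary law of $(X_0^\epl,X_\Delta^\epl,\widetilde Z_0^\epl)$). The only notable difference is that the paper obtains the weak convergence $\widetilde\rho^\epl\rightharpoonup\widetilde\rho^0$ directly from Lemma~\ref{lem:ergodicity_XZ_tilde} (which works with the discrete recursion for $\widetilde Z_n^\epl$ and then invokes homogenization), rather than via your detour through the continuous joint system $(X_t^\epl,Z_t^\epl)$; your explicit uniform-integrability argument is a technical point the paper leaves implicit.
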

\begin{proof}
We only prove the statement for $\widetilde{\mathcal G}_J$, then the argument is similar for $\widetilde{\mathcal H}_J$. By the triangle inequality we have
\begin{equation}
\norm{\widetilde{\mathcal G}_J(\epl,a) - \frac1\Delta \sum_{j=1}^J \E^{\widetilde \rho^0} \left[ g_j \left( X_0^0, X_\Delta^0, \widetilde Z_0^0; a^* \right) \right]} \le Q_1(\epl,a) + Q_2(\epl),
\end{equation}
where
\begin{equation}
Q_1(\epl,a) = \norm{\widetilde{\mathcal G}_J(\epl,a) - \widetilde{\mathcal G}_J(\epl,a^*)},
\end{equation}
which vanishes due to the first step of the proof of Proposition \ref{pro:continuity_G_tilde} and
\begin{equation}
Q_2(\epl) = \norm{\frac1\Delta \sum_{j=1}^J \E^{\widetilde \rho^\epl} \left[ g_j \left( X_0^\epl, X_\Delta^\epl, \widetilde Z_0^\epl; a^* \right) \right] - \frac1\Delta \sum_{j=1}^J \E^{\widetilde \rho^0} \left[ g_j \left( X_0^0, X_\Delta^0, \widetilde Z_0^0; a^* \right) \right]}.
\end{equation}
Let us remark that the convergence in law of the joint process $\{(\widetilde X^\epl_n, \widetilde Z^\epl_n)\}_{n=0}^N$ to the joint process $\{(\widetilde X^0_n, \widetilde Z^0_n)\}_{n=0}^N$ by Lemma \ref{lem:ergodicity_XZ_tilde} implies the convergence in law of the triple $(X_0^\epl, X_\Delta^\epl, \widetilde Z_0^\epl)$ to the triple $(X_0^0, X^0_\Delta, \widetilde Z^0_0)$ since $\widetilde X_0^\epl = X_0^\epl$, $\widetilde X_1^\epl = X_\Delta^\epl$ and $\widetilde X_0^0 = X_0^0$, $\widetilde X_1^0 = X_\Delta^0$. Therefore we have
\begin{equation}
\lim_{\epl\to0} Q_2(\epl) \le \lim_{\epl\to0} \frac1\Delta \sum_{j=1}^J \norm{\E^{\widetilde \rho^\epl} \left[ g_j \left( X_0^\epl, X_\Delta^\epl, \widetilde Z_0^\epl; a^* \right) \right] - \E^{\widetilde \rho^0} \left[ g_j \left( X_0^0, X_\Delta^0, \widetilde Z_0^0; a^* \right) \right]} = 0,
\end{equation}
which implies the desired result.
\end{proof}

\begin{remark} \label{rem:continuity_G_hat_0_D}
Similar results to Proposition \ref{pro:continuity_G_tilde} and Proposition \ref{pro:continuity_G_tilde_0_D} can be shown for the estimator without filtered data. In particular we have that $\widehat{\mathcal G}_J(\epl,a)$ and $\widehat{\mathcal H}_J(\epl,a)$ are continuous in $(0,\infty) \times \mathcal A$ and
\begin{equation}
\begin{aligned}
(i)& \lim_{(\epl,a) \to (0,a^*)} \widehat{\mathcal G}_J(\epl,a) = \frac1\Delta \sum_{j=1}^J \E^{\varphi^0} \left[ g_j \left( X_0^0, X_\Delta^0, X_0^0; a^* \right) \right], \\
(ii)& \lim_{(\epl,a) \to (0,a^*)} \widehat{\mathcal H}_J(\epl,a) = \frac1\Delta \sum_{j=1}^J \E^{\varphi^0} \left[ h_j \left( X_0^0, X_\Delta^0, X_0^0; a^* \right) \right].
\end{aligned}
\end{equation}
Since the proof is analogous, we do not report here the details.
\end{remark}

\begin{proposition} \label{pro:continuity_G_tilde_0_De}
Let the functions $\widetilde{\mathcal G}_J \colon (0,\infty) \times \mathcal A \to \R^M$ and $\widetilde{\mathcal H}_J, \colon (0,\infty) \times \mathcal A \to \R^{M \times M}$ be defined in \eqref{eq:def_G_tilde} and \eqref{eq:def_H_tilde} and let $\Delta=\epl^\zeta$ with $\zeta>0$ and $\zeta \neq 1$, $\zeta \neq 2$. Under Assumption \ref{ass:beta_functions} and for any $a^* \in \mathcal A$ we have
\begin{enumerate}
\item $\lim_{(\epl,a) \to (0,a^*)} \widetilde{\mathcal G}_J(\epl,a) = \widetilde{\mathfrak g}_J^0(a^*)$, where
\begin{equation}
\widetilde{\mathfrak g}_J^0(a) \defeq \sum_{j=1}^J \E^{\rho^0} \left[ \beta_j(Z_0^0;a) \left( \diffL_A \phi_j(X_0^0;a) + \lambda_j(a) \phi_j(X_0^0;a) \right) \right],
\end{equation}
\item $\lim_{(\epl,a) \to (0,a^*)} \widetilde{\mathcal H}_J(\epl,a) = \widetilde{\mathfrak h}_J^0(a^*)$, where
\begin{equation}
\begin{aligned}
\widetilde{\mathfrak h}_J^0(a) &\defeq \sum_{j=1}^J \E^{\rho^0} \left[ \dot \beta_j(Z_0^0;a) \left( \diffL_A \phi_j(X_0^0;a) + \lambda_j(a) \phi_j(X_0^0;a) \right) \right] \\
&\quad + \sum_{j=1}^J \E^{\rho^0} \left[ \beta_j(Z_0^0;a) \otimes \left( \diffL_A \dot \phi_j(X_0^0;a) + \lambda_j(a) \dot \phi_j(X_0^0;a) \right) \right] \\
&\quad + \sum_{j=1}^J \E^{\rho^0} \left[ \beta_j(Z_0^0;a) \phi_j(X_0^0;a) \right] \otimes \dot \lambda_j(a),
\end{aligned}
\end{equation}
\end{enumerate}
where the generator $\diffL_A$ is defined in \eqref{eq:generator}.
\end{proposition}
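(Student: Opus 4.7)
The plan is to follow the same decomposition as in Proposition \ref{pro:continuity_G_tilde_0_D}: write $\|\widetilde{\mathcal G}_J(\epl,a) - \widetilde{\mathfrak g}_J^0(a^*)\| \le Q_1(\epl,a) + Q_2(\epl)$ with $Q_1(\epl,a) = \|\widetilde{\mathcal G}_J(\epl,a) - \widetilde{\mathcal G}_J(\epl,a^*)\|$ and $Q_2(\epl) = \|\widetilde{\mathcal G}_J(\epl,a^*) - \widetilde{\mathfrak g}_J^0(a^*)\|$, and handle them separately. For $Q_1$, I would use the mean value theorem along $a$, noting that the seemingly bad $1/\Delta$ prefactor in $\widetilde{\mathcal G}_J$ is actually compensated: the derivative in $a$ of the bracket $\phi_j(y;a) - e^{-\lambda_j(a)\Delta}\phi_j(x;a)$ carries an explicit factor $\Delta$ through the exponential (producing the $\Delta\dot\lambda_j(a)$ term already visible in the definition \eqref{eq:def_h} of $h_j$). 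Combining this with Assumption \ref{ass:beta_functions}, \cite[Corollary 5.4]{PaS07}, and Lemma \ref{lem:Ztilde_bounded_moments} yields $Q_1(\epl,a) \le C\|a-a^*\|$ uniformly in $\epl$.

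For $Q_2(\epl)$ the central tool is a short-time expansion. Applying Lemma \ref{lem:expansion_fXD} with $f = \phi_j(\cdot;a^*)$ and Taylor-expanding $e^{-\lambda_j(a^*)\Delta} = 1 - \lambda_j(a^*)\Delta + \mathcal O(\Delta^2)$, I would write
\[
\frac{\phi_j(X_\Delta^\epl;a^*) - e^{-\lambda_j(a^*)\Delta}\phi_j(X_0^\epl;a^*)}{\Delta} = \diffL_A \phi_j(X_0^\epl;a^*) + \lambda_j(a^*)\phi_j(X_0^\epl;a^*) + \frac{M_j(\Delta) + R_j(\epl,\Delta)}{\Delta} + \mathcal O(\Delta),
\]
where $M_j(\Delta)$ is the Itô integral from Lemma \ref{lem:expansion_fXD} and $\|R_j\|_{L^p(\varphi^\epl)} \le C(\epl + \Delta^{3/2})$. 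Because $\widetilde Z_0^\epl$ is $\mathcal F_0$-measurable and the integrand of $M_j$ is adapted with finite second moment by Assumption \ref{ass:beta_functions}, the tower property kills the martingale contribution in expectation. The convergence in distribution $(X_0^\epl,\widetilde Z_0^\epl) \Rightarrow (X_0^0,Z_0^0)$ supplied by Lemma \ref{lem:ergodicity_XZ_tilde}, combined with uniform integrability from the polynomial growth in Assumption \ref{ass:beta_functions} and Lemma \ref{lem:Ztilde_bounded_moments}, then sends the remaining leading terms to $\widetilde{\mathfrak g}_J^0(a^*)$.

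The main obstacle lies in the remainder $\E^{\widetilde\rho^\epl}[\beta_j(\widetilde Z_0^\epl;a^*) R_j(\epl,\Delta)/\Delta]$, which by Hölder and the moment bound of Lemma \ref{lem:Ztilde_bounded_moments} is of order $\epl^{1-\zeta} + \epl^{\zeta/2}$. This is fine in the subcritical regime $\zeta \in (0,1)$, but for $\zeta > 1$ the factor $\epl^{1-\zeta}$ diverges, so the naive invocation of Lemma \ref{lem:expansion_fXD} must be replaced by a finer expansion. I would proceed by applying Itô's formula directly to the multiscale SDE on $[0,\Delta]$ and rewriting the resulting fast-scale contribution $\epl^{-1}p'(X_t^\epl/\epl)\phi_j'(X_t^\epl;a^*)$ by means of the cell problem \eqref{eq:cell_problem}, after which taking expectation under $\widetilde\rho^\epl$ and integrating by parts against $\varphi^\epl$ eliminates it and recovers again $\diffL_A \phi_j$ as the effective drift. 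The residual cross-terms produced by this procedure carry factors $\Delta/\epl$ and $\Delta/\epl^2$, which is precisely why the statement excludes the resonant values $\zeta = 1$ and $\zeta = 2$.

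The passage to $\widetilde{\mathcal H}_J$ is then routine: each of the three pieces of $h_j$ in \eqref{eq:def_h} is treated by the same short-time expansion applied to either $\phi_j$ or $\dot\phi_j$, and the explicit factor $\Delta\dot\lambda_j(a)$ sitting inside $h_j$ cancels the $1/\Delta$ and produces, upon passing to the limit, exactly the $\dot\lambda_j$ term that appears in $\widetilde{\mathfrak h}_J^0(a^*)$.
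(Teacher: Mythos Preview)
Your overall decomposition into $Q_1$ and $Q_2$ matches the paper, and in the subcritical regime $\zeta\in(0,1)$ your use of Lemma~\ref{lem:expansion_fXD} is exactly the paper's Case~1. However, there are two genuine gaps.

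First, your passage to the limit in the leading term appeals to Lemma~\ref{lem:ergodicity_XZ_tilde} for the weak convergence $(X_0^\epl,\widetilde Z_0^\epl)\Rightarrow(X_0^0,Z_0^0)$. That lemma's convergence-in-law statement is proved only when $\Delta$ is \emph{independent} of $\epl$; when $\Delta=\epl^\zeta\to0$ there is no discrete homogenized limit $(\widetilde X^0_n,\widetilde Z^0_n)$ to speak of. The paper fixes this by an extra step you are missing: replace $\widetilde Z_0^\epl$ by the \emph{continuous} filter $Z_0^\epl$ via the mean-value theorem and Corollary~\ref{cor:asymptotic_distances_Z}, and only then invoke the continuous-time homogenization result $(X_0^\epl,Z_0^\epl)\Rightarrow(X_0^0,Z_0^0)$ from \cite[Section 3.2]{AGP20}. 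This passage $\widetilde Z^\epl\to Z^\epl$ is used in both cases of the proof.

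Second, for $\zeta>1$ your sketch ``integrating by parts against $\varphi^\epl$ eliminates the fast term'' understates the difficulty. The expectation is taken under the \emph{joint} stationary law $\rho^\epl(x,z)$, so integration by parts in $x$ produces not zero but the extra term $\sigma\int\!\!\int \beta_j(z;a^*)\phi_j'(x;a^*)\varphi^\epl(x)\,\partial_x\eta^\epl(x,z)\,\d x\,\d z$ with $\eta^\epl=\rho^\epl/\varphi^\epl$. The paper closes this by the filter-specific identity \eqref{eq:magic} from \cite[Lemma~3.5]{AGP20}, which rewrites it as $\E^{\rho^\epl}[\beta_j'(Z_0^\epl;a^*)\phi_j(X_0^\epl;a^*)(X_0^\epl-Z_0^\epl)]$, a quantity that survives the homogenization limit and, after applying the same identity to the homogenized equation, yields exactly $\diffL_A\phi_j$. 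This structure --- Itô on $\beta_j(Z^\epl_t)$, stationarity, and the identity \eqref{eq:magic} --- is the heart of Case~2 and is absent from your outline. Finally, your $Q_1$ argument is incomplete: only one of the three pieces of $h_j$ carries an explicit factor $\Delta$; the other two require the same short-time expansion to compensate the $1/\Delta$, as the paper does by ``repeating the same computation as above''.
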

\begin{proof}
We only prove the statement for $\widetilde{\mathcal G}_J$, then the argument is similar for $\widetilde{\mathcal H}_J$. By the triangle inequality we have
\begin{equation}
\norm{\widetilde{\mathcal G}_J(\epl,a) - \widetilde{\mathfrak g}_J^0(a^*)} \le \norm{\widetilde{\mathcal G}_J(\epl,a) - \widetilde{\mathcal G}_J(\epl,a^*)} + \norm{\widetilde{\mathcal G}_J(\epl,a^*) - \widetilde{\mathfrak g}_J^0(a^*)} \eqdef Q_1(\epl,a) + Q_2(\epl),
\end{equation}
then we need to show that the two terms vanish and we distinguish two cases. \\
\textbf{Case 1:} $\zeta \in (0,1)$. \\
Applying Lemma \ref{lem:expansion_fXD} to the functions $\phi_j(\cdot;a^*)$ for all $j=1,\dots,J$ and noting that
\begin{equation}
\E^{\widetilde \rho^\epl} \left[ \beta_j(\widetilde Z_0^\epl;a^*) \int_0^\Delta \phi_j'(X_t^\epl;a^*) (1+\Phi'(Y_t^\epl)) \dd W_t \right] = 0,
\end{equation}
since 
\begin{equation}
M_s \defeq \int_0^s \phi_j'(X_t^\epl;a^*) (1+\Phi'(Y_t^\epl)) \dd W_t
\end{equation}
is a martingale with $M_0=0$, we have
\begin{equation}
\begin{aligned}
\widetilde{\mathcal G}_J(\epl,a^*) &= \frac1\Delta \sum_{j=1}^J \E^{\widetilde \rho^\epl} \left[ \beta_j(\widetilde Z_0^\epl;a^*) \left( \phi_j(X_\Delta^\epl;a^*) - e^{-\lambda_j(a^*)\Delta} \phi_j(X_0^\epl;a^*) \right) \right] \\
&= \frac{1 - e^{-\lambda_j(a^*)\Delta}}{\Delta} \sum_{j=1}^J \E^{\widetilde \rho^\epl} \left[ \beta_j(\widetilde Z_0^\epl;a^*) \phi_j(X_0^\epl;a^*) \right] + \sum_{j=1}^J \frac1\Delta \E^{\widetilde \rho^\epl} \left[ \beta_j(\widetilde Z_0^\epl;a^*) R(\epl,\Delta) \right] \\
&\quad + \sum_{j=1}^J \E^{\widetilde \rho^\epl} \left[ \beta_j(\widetilde Z_0^\epl;a^*) \left( \Sigma \phi_j''(X_0^\epl;a^*) - A \cdot V'(X_0^\epl) \phi_j'(X_0^\epl;a^*) \right) \right] \\
&\eqdef I_1^\epl + I_2^\epl + I_3^\epl,
\end{aligned}
\end{equation}
where $R(\epl,\Delta)$ satisfies for a constant $C>0$ independent of $\epl$ and $\Delta$ and for all $p\ge1$
\begin{equation} \label{eq:bound_R_lemma}
\left( \E^{\widetilde \rho^\epl} \abs{R(\epl,\Delta)}^p \right)^{1/p} \le C(\epl + \Delta^{3/2}).
\end{equation}
We now study the three terms separately. First, by Cauchy-Schwarz inequality, since $\beta_j(\cdot;a^*)$ is polynomially bounded, $\widetilde Z_0^\epl$ has bounded moments of any order by Lemma \ref{lem:Ztilde_bounded_moments} and due to \eqref{eq:bound_R_lemma} we obtain
\begin{equation} \label{eq:limit_I2}
\norm{I_2^\epl} \le C \left( \epl \Delta^{-1} + \Delta^{1/2} \right).
\end{equation}
Let us now focus on $I_1^\epl$ for which we have
\begin{equation}
I_1^\epl = \frac{1 - e^{-\lambda_j(a^*)\Delta}}{\Delta} \sum_{j=1}^J \left( \E^{\rho^\epl} \left[ \beta_j(Z_0^\epl;a^*) \phi_j(X_0^\epl;a^*) \right] + \E \left[ \left( \beta_j(\widetilde Z_0^\epl;a^*) - \beta_j(Z_0^\epl;a^*) \right) \phi_j(X_0^\epl;a^*) \right] \right),
\end{equation}
where $Z_0^\epl$ is distributed according to the invariant measure $\rho^\epl$ of the continuous process $(X_t^\epl,Z_t^\epl)$ and
\begin{equation} \label{eq:limit_eigenvalue}
\lim_{\epl\to0} \frac{1 - e^{-\lambda_j(a^*)\Delta}}{\Delta} = \lambda_j(a^*).
\end{equation}
By the mean value theorem for vector-valued functions we have
\begin{equation}
\E \left[ ( \beta_j(\widetilde Z_0^\epl;a^*) - \beta_j(Z_0^\epl;a^*) ) \phi_j(X_0^\epl;a^*) \right] = \E \left[\int_0^1 \beta_j'(Z_0^\epl + t(\widetilde Z_0^\epl - Z_0^\epl);a^*) \dd t \; (\widetilde Z_0^\epl - Z_0^\epl) \phi_j(X_0^\epl;a^*) \right],
\end{equation}
and since $\beta_j'(\cdot;a^*),\phi_j(\cdot;a^*)$ are polynomially bounded, $X_0^\epl$, $Z_0^\epl$, $\widetilde Z_0^\epl$ have bounded moments of any order respectively by \cite[Corollary 5.4]{PaS07}, \cite[Lemma C.1]{AGP20} and Lemma \ref{lem:Ztilde_bounded_moments} and applying Hölder's inequality and Corollary \ref{cor:asymptotic_distances_Z} we obtain
\begin{equation} \label{eq:bound_remainder_eigenvalue}
\norm{\E \left[ \left( \beta_j(\widetilde Z_0^\epl;a^*) - \beta_j(Z_0^\epl;a^*) \right) \phi_j(X_0^\epl;a^*) \right]} \le C \left( \Delta^{1/2} + \epl \right).
\end{equation}
Moreover, notice that by homogenization theory (see \cite[Section 3.2]{AGP20}) the joint process $(X_0^\epl, Z_0^\epl)$ converges in law to the joint process $(X_0^0, Z_0^0)$ and therefore
\begin{equation}
\lim_{\epl\to0} \E^{\rho^\epl} \left[ \beta_j(Z_0^\epl;a^*) \phi_j(X_0^\epl;a^*) \right] = \E^{\rho^0} \left[ \beta_j(Z_0^0;a^*) \phi_j(X_0^0;a^*) \right],
\end{equation}
which together with \eqref{eq:limit_eigenvalue} and \eqref{eq:bound_remainder_eigenvalue} yields
\begin{equation} \label{eq:limit_I1}
\lim_{\epl\to0} I_1^\epl = \sum_{j=1}^J \lambda_j(a^*) \E^{\rho^0} \left[ \beta_j(Z_0^0;a^*) \phi_j(X_0^0;a^*) \right].
\end{equation}
We now consider $I_3^\epl$ and we follow an argument similar to $I_2^\epl$. We first have
\begin{equation}
\begin{aligned}
I_3^\epl &= \sum_{j=1}^J \E^{\rho^\epl} \left[ \beta_j(Z_0^\epl;a^*) \left( \Sigma \phi_j''(X_0^\epl;a^*) - A \cdot V'(X_0^\epl) \phi_j'(X_0^\epl;a^*) \right) \right] \\
&\quad + \sum_{j=1}^J \E \left[ \left( \beta_j(\widetilde Z_0^\epl;a^*) - \beta_j(Z_0^\epl;a^*) \right) \left( \Sigma \phi_j''(X_0^\epl;a^*) - A \cdot V'(X_0^\epl) \phi_j'(X_0^\epl;a^*) \right) \right] \\
&\eqdef I_{3,1}^\epl + I_{3,2}^\epl,
\end{aligned}
\end{equation}
where the first term in the right-hand side converges due to homogenization theory and the second one is bounded by
\begin{equation}
\norm{I_{3,2}^\epl} \le C \left( \Delta^{1/2} + \epl \right).
\end{equation}
Therefore, we obtain
\begin{equation}
\lim_{\epl\to0} I_3^\epl = \sum_{j=1}^J \E^{\rho^0} \left[ \beta_j(Z_0^0;a^*) \left( \Sigma \phi_j''(X_0^0;a^*) - A \cdot V'(X_0^0) \phi_j'(X_0^0;a^*) \right) \right],
\end{equation}
which together with \eqref{eq:limit_I2} and \eqref{eq:limit_I1} implies
\begin{equation} \label{eq:goal_e_case1}
\lim_{\epl\to0} \widetilde{\mathcal G}_J(\epl,a^*) = \sum_{j=1}^J \E^{\rho^0} \left[ \beta_j(Z_0^0;a) \left( \Sigma \phi_j''(X_0^0;a^*) - A \cdot V'(X_0^0) \phi_j'(X_0^0;a^*) + \lambda_j(a^*) \phi_j(X_0^0;a^*) \right) \right],
\end{equation}
which shows that $Q_2(\epl)$ vanishes as $\epl$ goes to zero. Let us now consider $Q_1(\epl,a)$. Following the first step of the proof of Proposition \ref{pro:continuity_G_tilde} we have
\begin{equation}
\begin{aligned}
Q_1(\epl,a) &\le \frac1\Delta \sum_{j=1}^J \norm{\E^{\widetilde \rho^\epl} \left[ g_j(X_0^\epl, X_\Delta^\epl, \widetilde Z_0^\epl; a) \right] - \E^{\widetilde \rho^\epl} \left[ g_j(X_0^\epl, X_\Delta^\epl, \widetilde Z_0^\epl; a^*) \right]} \\
&\le \sum_{j=1}^J \norm{\frac1\Delta \E^{\widetilde \rho^\epl} \left[ h_j(X_0^\epl, X_\Delta^\epl, \widetilde Z_0^\epl; \widetilde a) \right]} \norm{(a-a^*)},
\end{aligned}
\end{equation}
where $\widetilde a$ assumes values in the line connecting $a$ and $a^*$, and repeating the same computation as above we obtain
\begin{equation}
Q_1(\epl,a) \le C \norm{a-a^*},
\end{equation}
which together with \eqref{eq:goal_e_case1} gives the desired result. \\
\textbf{Case 2:} $\zeta \in (1,2) \cup (2,\infty)$. \\
Let $Z_0^\epl$ be distributed according to the invariant measure $\rho^\epl$ of the continuous process $(X_t^\epl,Z_t^\epl)$ and define
\begin{equation}
\begin{aligned}
\widetilde R(\epl,\Delta) &\defeq \frac1\Delta \sum_{j=1}^J \E^{\widetilde \rho^\epl} \left[ g_j(X_0^\epl, X_\Delta^\epl, \widetilde Z_0^\epl; a^*) \right] - \frac1\Delta \sum_{j=1}^J \E^{\rho^\epl} \left[ g_j(X_0^\epl, X_\Delta^\epl, Z_0^\epl; a^*) \right] \\
&= \frac1\Delta \sum_{j=1}^J \E \left[ \left( \beta_j(\widetilde Z_0^\epl;a^*) - \beta_j(Z_0^\epl;a^*) \right) \left( \phi_j(X_\Delta^\epl;a^*) - e^{-\lambda_j(a^*)\Delta} \phi_j(X_0^\epl;a^*) \right) \right].
\end{aligned}
\end{equation}
Then we have
\begin{equation} \label{eq:def_Qje}
\widetilde{\mathcal G}_J(\epl,a^*) = \sum_{j=1}^J \frac1\Delta \E^{\rho^\epl} \left[ g_j(X_0^\epl, X_\Delta^\epl, Z_0^\epl; a^*) \right] + \widetilde R(\epl,\Delta) \eqdef \sum_{j=1}^J Q_j^\epl + \widetilde R(\epl,\Delta),
\end{equation}
and we first bound the remainder $\widetilde R(\epl,\Delta)$. Applying Itô's lemma to the process $X_t^\epl$ with the functions $\phi_j(\cdot;a^*)$ for each $j=1,\dots,J$ we have
\begin{equation} \label{eq:ito_phi}
\begin{aligned}
\phi_j(X_\Delta^\epl;a^*) &= \phi_j(X_0^\epl;a^*) - \int_0^\Delta \alpha \cdot V'(X_t^\epl) \phi_j'(X_t^\epl;a^*) \dd t - \int_0^\Delta \frac1\epl p' \left( \frac{X_t^\epl}{\epl} \right) \phi_j'(X_t^\epl;a^*) \dd t \\
&\quad + \sigma \int_0^\Delta \phi_j''(X_t^\epl;a^*) \dd t + \sqrt{2\sigma} \int_0^\Delta \phi_j'(X_t^\epl;a^*) \dd W_t,
\end{aligned}
\end{equation}
and observing that 
\begin{equation} \label{eq:martingale0}
\E \left[ \left( \beta_j(\widetilde Z_0^\epl;a^*) - \beta_j(Z_0^\epl;a^*) \right) \int_0^\Delta \phi_j'(X_t^\epl;a^*) \dd W_t \right] = 0,
\end{equation}
since 
\begin{equation}
M_s = \int_0^s \phi_j'(X_t^\epl;a^*) \dd W_t
\end{equation}
is a martingale with $M_0=0$, we obtain
\begin{equation}
\begin{aligned}
\widetilde R(\epl,\Delta) &= \sum_{j=1}^J \frac{1-e^{-\lambda_j(a^*)\Delta}}{\Delta} \E \left[ \left( \beta_j(\widetilde Z_0^\epl;a^*) - \beta_j(Z_0^\epl;a^*) \right) \phi_j(X_0^\epl;a^*) \right] \\
&\quad + \sum_{j=1}^J \frac1\Delta \int_0^\Delta \E \left[ \left( \beta_j(\widetilde Z_0^\epl;a^*) - \beta_j(Z_0^\epl;a^*) \right) \left( \sigma \phi_j''(X_t^\epl;a^*) - \alpha \cdot V'(X_t^\epl) \phi_j'(X_t^\epl;a^*) \right) \right] \dd t \\
&\quad - \sum_{j=1}^J \frac{1}{\epl\Delta} \int_0^\Delta \E \left[ \left( \beta_j(\widetilde Z_0^\epl;a^*) - \beta_j(Z_0^\epl;a^*) \right) p' \left( \frac{X_t^\epl}{\epl} \right) \phi_j'(X_t^\epl;a^*) \right] \dd t \\
&\eqdef \widetilde R_1(\epl,\Delta) + \widetilde R_2(\epl,\Delta) + \widetilde R_3(\epl,\Delta).
\end{aligned}
\end{equation}
By the mean value theorem for vector-valued functions we have
\begin{equation}
\E \left[ ( \beta_j(\widetilde Z_0^\epl;a^*) - \beta_j(Z_0^\epl;a^*) ) \phi_j(X_0^\epl;a^*) \right] = \E \left[ \int_0^1 \beta_j'(Z_0^\epl + t(\widetilde Z_0^\epl - Z_0^\epl);a^*) \dd t \; ( \widetilde Z_0^\epl - Z_0^\epl ) \phi_j(X_0^\epl;a^*) \right],
\end{equation}
and since $\beta_j'(\cdot;a^*),\phi_j(\cdot;a^*)$ are polynomially bounded, $X_0^\epl$, $Z_0^\epl$, $\widetilde Z_0^\epl$ have bounded moments of any order respectively by \cite[Corollary 5.4]{PaS07}, \cite[Lemma C.1]{AGP20} and Lemma \ref{lem:Ztilde_bounded_moments} and applying Hölder's inequality we obtain
\begin{equation} \label{eq:bound_intermediate_Rtilde1}
\norm{\widetilde R_1(\epl,\Delta)} \le C \left( \E \abs{\widetilde Z_0^\epl - Z_0^\epl}^{2} \right)^{1/2},
\end{equation}
for a constant $C>0$ independent of $\epl$ and $\Delta$. We repeat a similar argument for $\widetilde R_2(\epl,\Delta)$ and $\widetilde R_3(\epl,\Delta)$ to get
\begin{equation}
\norm{\widetilde R_2(\epl,\Delta)} \le C \left( \E \abs{\widetilde Z_0^\epl - Z_0^\epl}^{2} \right)^{1/2} \quad \text{and} \quad \norm{\widetilde R_3(\epl,\Delta)} \le C \epl^{-1} \left( \E \abs{\widetilde Z_0^\epl - Z_0^\epl}^{2} \right)^{1/2},
\end{equation}
which together with \eqref{eq:bound_intermediate_Rtilde1} yield
\begin{equation} \label{eq:bound_Rtilde}
\norm{\widetilde R(\epl,\Delta)} \le C \left( \E \abs{\widetilde Z_0^\epl - Z_0^\epl}^{2} \right)^{1/2} \left( 1 + \epl^{-1} \right).
\end{equation}
Moreover, applying Lemma \ref{lem:expansion_fXD} and proceeding similarly to the first part of the first case of the proof we have
\begin{equation}
\norm{\widetilde R(\epl,\Delta)} \le C \left( \E \abs{\widetilde Z_0^\epl - Z_0^\epl}^{2} \right)^{1/2} \left( 1 + \epl\Delta^{-1} + \Delta^{1/2} \right),
\end{equation}
which together with \eqref{eq:bound_Rtilde} and Corollary \ref{cor:asymptotic_distances_Z} implies
\begin{equation} \label{eq:final_bound_Rtilde}
\begin{aligned}
\norm{\widetilde R(\epl,\Delta)} &\le C \left( \E \abs{\widetilde Z_0^\epl - Z_0^\epl}^{2} \right)^{1/2} \left( 1 + \min \{ \epl^{-1}, \epl\Delta^{-1} + \Delta^{1/2} \} \right) \\
&\le C \left( \Delta^{1/2} + \min \{ \epl, \epl^{-1}\Delta \} \right) \left( 1 + \min \{ \epl^{-1}, \epl\Delta^{-1} + \Delta^{1/2} \} \right).
\end{aligned}
\end{equation}
Let us now consider $Q_j^\epl$. Replacing equation \eqref{eq:ito_phi} into the definition of $Q_j^\epl$ in \eqref{eq:def_Qje} and observing that similarly to \eqref{eq:martingale0} it holds
\begin{equation}
\E^{\rho^\epl} \left[ \beta_j(Z_0^\epl;a^*) \int_0^\Delta \phi_j'(X_t^\epl;a^*) \dd W_t \right] = 0,
\end{equation}
we obtain
\begin{equation}
\begin{aligned}
Q_j^\epl &= \frac{1-e^{-\lambda_j(a^*)}}{\Delta} \E^{\rho^\epl} \left[ \beta_j(Z_0^\epl;a^*) \phi_j(X_0^\epl;a^*) \right] \\
&\quad - \frac1\Delta \left( \int_0^\Delta \E^{\rho^\epl} \left[ \left( \beta_j(Z_0^\epl;a^*) \otimes V'(X_t^\epl) \right) \phi_j'(X_t^\epl;a^*) \right] \dd t \right) \alpha \\
&\quad - \frac1\Delta \int_0^\Delta \E^{\rho^\epl} \left[ \beta_j(Z_0^\epl;a^*) \frac1\epl p' \left( \frac{X_t^\epl}{\epl} \right) \phi_j'(X_t^\epl;a^*) \right] \dd t + \frac\sigma\Delta \int_0^\Delta \E^{\rho^\epl} \left[ \beta_j(Z_0^\epl;a^*) \phi_j''(X_t^\epl;a^*) \right] \dd t.
\end{aligned}
\end{equation}
We rewrite $\beta_j(Z_0^\epl;a^*)$ inside the integrals employing equation \eqref{eq:systemSDE_MS} and Itô's lemma
\begin{equation}
\beta_j(Z_0^\epl;a^*) = \beta_j(Z_t^\epl;a^*) - \int_0^t \beta_j'(Z_s^\epl;a^*) \left( X_s^\epl - Z_s^\epl \right) \dd s,
\end{equation}
hence due to stationarity we have
\begin{equation} \label{eq:decomposition_Q12}
Q_j^\epl = Q_{j,1}^\epl + Q_{j,2}^{\epl},
\end{equation}
where
\begin{equation}
\begin{aligned}
Q_{j,1}^\epl &= \frac{1-e^{-\lambda_j(a^*)}}{\Delta} \E^{\rho^\epl} \left[ \beta_j(Z_0^\epl;a^*) \phi_j(X_0^\epl;a^*) \right] - \E^{\rho^\epl} \left[ \left( \beta_j(Z_0^\epl;a^*) \otimes V'(X_0^\epl) \right) \phi_j'(X_0^\epl;a^*) \right] \alpha \\
&\quad - \E^{\rho^\epl} \left[ \beta_j(Z_0^\epl;a^*) \frac1\epl p' \left( \frac{X_0^\epl}{\epl} \right) \phi_j'(X_0^\epl;a^*) \right] + \sigma \E^{\rho^\epl} \left[ \beta_j(Z_0^\epl;a^*) \phi_j''(X_0^\epl;a^*) \right]
\end{aligned}
\end{equation}
and
\begin{equation}
\begin{aligned}
Q_{j,2}^\epl &= \frac1\Delta \left( \int_0^\Delta \int_0^t \E^{\rho^\epl} \left[ (\beta_j'(Z_s^\epl;a^*) \otimes V'(X_t^\epl)) \phi_j'(X_t^\epl;a^*) (X_s^\epl - Z_s^\epl) \right] \dd s \dd t \right) \alpha \\
&\quad + \frac1\Delta \int_0^\Delta \int_0^t \E^{\rho^\epl} \left[ \beta_j'(Z_s^\epl;a^*) \frac1\epl p' \left( \frac{X_t^\epl}{\epl} \right) \phi_j'(X_t^\epl;a^*) (X_s^\epl - Z_s^\epl) \right] \dd s \dd t \\
&\quad - \frac\sigma\Delta \int_0^\Delta \int_0^t \E^{\rho^\epl} \left[ \beta_j'(Z_s^\epl;a^*) \phi_j''(X_t^\epl;a^*) (X_s^\epl - Z_s^\epl) \right] \dd s \dd t.
\end{aligned}
\end{equation}
Since $\phi_j'(\cdot;a^*), \phi_j''(\cdot;a^*)$ and $\beta_j'(\cdot;a^*)$ are polynomially bounded, $p'$ is bounded and $X_t^\epl$ and $Z_t^\epl$ have bounded moments of any order respectively by \cite[Corollary 5.4]{PaS07} and \cite[Lemma C.1]{AGP20}, $Q_{j,2}^\epl$ is bounded by
\begin{equation} \label{eq:bound_Qj2}
\norm{Q_{j,2}^\epl} \le C \left( \Delta + \epl^{-1}\Delta \right).
\end{equation}
Let us now move to $Q_{j,1}^\epl$ and let us define the functions
\begin{equation}
\eta^\epl(x,z) \defeq \frac{\rho^\epl(x,z)}{\varphi^\epl(x)} \qquad \text{and} \qquad \eta^0(x,z) \defeq \frac{\rho^0(x,z)}{\varphi^0(x)},
\end{equation}
where $\rho^\epl$ and $\rho^0$ are respectively the densities with respect to the Lebesgue measure of the invariant distributions of the joint processes $(X_t^\epl,Z_t^\epl)$ and $(X_t^0,Z_t^0)$ and $\varphi^\epl$ and $\varphi^0$ are their marginals with respect to the first component. Integrating by parts we have
\begin{equation}
\begin{aligned}
\E^{\rho^\epl} \left[ \beta_j(Z_0^\epl;a^*) \frac1\epl p' \left( \frac{X_0^\epl}{\epl} \right) \phi_j'(X_0^\epl;a^*) \right] &= \int_\R \int_\R \beta_j(z;a^*) \frac1\epl p' \left( \frac x\epl \right) \phi_j'(x;a^*) \rho^\epl(x,z) \dd x \dd z \\
&\hspace{-2cm} = -\sigma \int_\R \int_\R \frac{1}{C_{\varphi^\epl}} \beta_j(z;a^*) \frac{\d}{\dd x} \left( e^{-\frac1\sigma p \left( \frac x\epl \right)} \right) \phi_j'(x;a^*) e^{-\frac1\sigma \alpha \cdot V(x)} \eta^\epl(x,z) \dd x \dd z \\
&\hspace{-2cm} = \sigma \int_\R \int_\R \frac{1}{C_{\varphi^\epl}} \beta_j(z;a^*) \frac{\partial}{\partial x} \left( \phi_j'(x;a^*) e^{-\frac1\sigma \alpha \cdot V(x)} \eta^\epl(x,z) \right) e^{-\frac1\sigma p \left( \frac x\epl \right)} \dd x \dd z,
\end{aligned}
\end{equation}
which implies
\begin{equation}
\begin{aligned}
\E^{\rho^\epl} \left[ \beta_j(Z_0^\epl;a^*) \frac1\epl p' \left( \frac{X_0^\epl}{\epl} \right) \phi_j'(X_0^\epl;a^*) \right] &= \sigma \E^{\rho^\epl} \left[ \beta_j(Z_0^\epl;a^*) \phi_j''(X_0^\epl;a^*) \right] \\
&\quad - \E^{\rho^\epl} \left[ (\beta_j(Z_0^\epl;a^*) \otimes V(X_0^\epl)) \phi_j'(X_0^\epl;a^*) \right] \alpha \\
&\quad + \sigma \int_\R \int_\R \beta_j(z;a^*) \phi_j'(x;a^*) \varphi^\epl(x) \frac{\partial}{\partial x} \eta^\epl(x,z) \dd x \dd z.
\end{aligned}
\end{equation}
Employing the last equation in the proof of Lemma 3.5 in \cite{AGP20} with $\delta=1$ and $f(x,z) = \beta_j(z;a^*) \phi_j'(x;a^*)$ we have
\begin{equation} \label{eq:magic}
\sigma \int_\R \int_\R \beta_j(z;a^*) \phi_j'(x;a^*) \varphi^\epl(x) \frac{\partial}{\partial x} \eta^\epl(x,z) \dd x \dd z = \E^{\rho^\epl} \left[ \beta_j'(Z_0^\epl;a^*) \phi_j(X_0^\epl;a^*) (X_0^\epl - Z_0^\epl) \right],
\end{equation}
and thus we obtain
\begin{equation}
Q_{j,1}^\epl = \frac{1-e^{-\lambda_j(a^*)}}{\Delta} \E^{\rho^\epl} \left[ \beta_j(Z_0^\epl;a^*) \phi_j(X_0^\epl;a^*) \right] - \E^{\rho^\epl} \left[ \beta_j'(Z_0^\epl;a^*) \phi_j(X_0^\epl;a^*) (X_0^\epl - Z_0^\epl) \right].
\end{equation}
Letting $\epl$ go to zero and due to homogenization theory, it follows
\begin{equation}
\lim_{\epl\to0} Q_{j,1}^\epl = \lambda_j(a^*) \E^{\rho^0} \left[ \beta_j(Z_0^0;a^*) \phi_j(X_0^0;a^*) \right] - \E^{\rho^0} \left[ \beta_j'(Z_0^0;a^*) \phi_j(X_0^0;a^*) (X_0^0 - Z_0^0) \right],
\end{equation}
then applying formula \eqref{eq:magic} for the homogenized equation, i.e. with $p(y)=0$ and $\alpha$ and $\sigma$ replaced by $A$ and $\Sigma$, and integrating by parts we have
\begin{equation}
\begin{aligned}
\E^{\rho^0} \left[ \beta_j'(Z_0^0;a^*) \phi_j(X_0^0;a^*) (X_0^0 - Z_0^0) \right] &= \Sigma \int_\R \int_\R \beta_j(z;a^*) \phi_j'(x;a^*) \varphi^0(x) \frac{\partial}{\partial x} \eta^0(x,z) \dd x \dd z \\
&= - \Sigma \int_\R \int_\R \beta_j(z;a^*) \frac{\d}{\dd x} \left( \phi_j'(x;a^*) \varphi^0(x) \right) \eta^0(x,z) \dd x \dd z \\
&= \E^{\rho^0} \left[ \beta_j(Z_0^0;a^*) \left( \Sigma \phi_j''(X_0^0;a^*) - A \cdot V'(X_0^0) \phi_j'(X_0^0;a^*) \right) \right].
\end{aligned}
\end{equation}
Therefore, we obtain
\begin{equation}
\lim_{\epl\to0} Q_{j,1}^\epl = \E^{\rho^0} \left[ \beta_j(Z_0^0;a^*) \left( \Sigma \phi_j''(X_0^0;a^*) - A \cdot V'(X_0^0) \phi_j'(X_0^0;a^*) + \lambda_j(a^*) \phi_j(X_0^0;a^*) \right) \right],
\end{equation}
which together with \eqref{eq:def_Qje}, \eqref{eq:decomposition_Q12} and bounds \eqref{eq:final_bound_Rtilde} and \eqref{eq:bound_Qj2} implies that $Q_2(\epl)$ vanishes as $\epl$ goes to zero. Finally, analogously to the first case we can show that also $Q_1(\epl,a)$ vanishes, concluding the proof.
\end{proof}

\begin{remark} \label{rem:continuity_G_hat_0_De}
A similar result to Proposition \ref{pro:continuity_G_tilde_0_De} can be shown for the estimator without filtered data only if $\zeta \in (0,1)$, i.e. the first case in the proof. In particular, we have
\begin{enumerate}[leftmargin=*]
\item $\lim_{(\epl,a) \to (0,a^*)} \widehat{\mathcal G}_J(\epl,a) = \widehat{\mathfrak g}_J^0(a^*)$, where
\begin{equation}
\widehat{\mathfrak g}_J^0(a) \defeq \sum_{j=1}^J \E^{\varphi^0} \left[ \beta_j(X_0^0;a) \left( \diffL_A \phi_j(X_0^0;a) + \lambda_j(a) \phi_j(X_0^0;a) \right) \right],
\end{equation}
\item $\lim_{(\epl,a) \to (0,a^*)} \widehat{\mathcal H}_J(\epl,a) = \widehat{\mathfrak h}_J^0(a^*)$, where
\begin{equation}
\begin{aligned}
\widehat{\mathfrak h}_J^0(a) &\defeq \sum_{j=1}^J \E^{\varphi^0} \left[ \dot \beta_j(X_0^0;a) \left( \diffL_A \phi_j(X_0^0;a) + \lambda_j(a) \phi_j(X_0^0;a) \right) \right] \\
&\quad + \sum_{j=1}^J \E^{\varphi^0} \left[ \beta_j(X_0^0;a) \otimes \left( \diffL_A \dot \phi_j(X_0^0;a) + \lambda_j(a) \dot \phi_j(X_0^0;a) \right) \right] \\
&\quad + \E^{\varphi^0} \left[ \beta_j(X_0^0;a) \phi_j(X_0^0;a) \right] \otimes \dot \lambda_j(a),
\end{aligned}
\end{equation}
\end{enumerate}
where the generator $\diffL_A$ is defined in \eqref{eq:generator}. Since the proof is analogous, we do not report here the details. On the other hand, if $\zeta > 2$ we can show that
\begin{enumerate}[leftmargin=*]
\item $\lim_{(\epl,a) \to (0,a^*)} \widehat{\mathcal G}_J(\epl,a) = \mathfrak g_J^0(a^*)$, where
\begin{equation} \label{eq:continuity_G_hat_0_De_different}
\mathfrak g_J^0(a) \defeq \sum_{j=1}^J \E^{\varphi^0} \left[ \beta_j(X_0^0;a) \left( \sigma \phi_j''(X_0^0;a) - \alpha \cdot V'(X_0^0) \phi_j'(X_0^0;a) + \lambda_j(a) \phi_j(X_0^0;a) \right) \right],
\end{equation}
\item $\lim_{(\epl,a) \to (0,a^*)} \widehat{\mathcal H}_J(\epl,a) = \mathfrak h_J^0(a^*)$, where
\begin{equation}
\begin{aligned}
\mathfrak h_J^0(a) &\defeq \sum_{j=1}^J \E^{\varphi^0} \left[ \dot \beta_j(X_0^0;a) \left( \sigma \phi_j''(X_0^0;a) - \alpha \cdot V'(X_0^0) \phi_j'(X_0^0;a) + \lambda_j(a) \phi_j(X_0^0;a) \right) \right] \\
&\quad + \sum_{j=1}^J \E^{\varphi^0} \left[ \beta_j(X_0^0;a) \otimes \left( \sigma \dot \phi_j''(X_0^0;a) - \alpha \cdot V'(X_0^0) \dot \phi_j'(X_0^0;a) + \lambda_j(a) \dot \phi_j(X_0^0;a) \right) \right] \\
&\quad + \sum_{j=1}^J \E^{\varphi^0} \left[ \beta_j(X_0^0;a) \phi_j(X_0^0;a) \right] \otimes \dot \lambda_j(a).
\end{aligned}
\end{equation}
\end{enumerate}
The proof is omitted since it is similar to the second case of the proof of Proposition \ref{pro:continuity_G_tilde_0_De}.
\end{remark}

\subsection{Proof of the main results} \label{sec:proof_main_results}

Let us remark that we aim to prove the asymptotic unbiasedness of the proposed estimators, i.e., their convergence to the homogenized drift coefficient $A$ as the the number of observations $N$ tends to infinity and the multiscale parameter $\epl$ vanishes. Therefore, we study the limit of the score functions and their Jacobian matrices as $N\to\infty$ and $\epl\to0$ evaluated in the desired limit point $A$. 

We first analyse the case $\Delta$ independent of $\epl$ and we consider the limit of Proposition \ref{pro:continuity_G_tilde_0_D} and Remark \ref{rem:continuity_G_hat_0_D} evaluated in $a^* = A$. Then due to equation \eqref{eq:martingale_formula} we get
\begin{equation} \label{eq:limit_score_A_ind}
\begin{aligned}
\frac1\Delta \sum_{j=1}^J \E^{\widetilde \rho^0} \left[ g_j \left( X_0^0, X_\Delta^0, \widetilde Z_0^0; A \right) \right] &= \frac1\Delta \sum_{j=1}^J \E^{\widetilde \rho^0} \left[ \beta_j(\widetilde Z_0^0;A) \left( \phi_j(X_\Delta^0;A) - e^{-\lambda_j(A)\Delta} \phi_j(X_0^0;A) \right) \right] \\
&\hspace{-2cm}= \frac1\Delta \sum_{j=1}^J \E^{\widetilde \rho^0} \left[ \beta_j(\widetilde Z_0^0;A) \left( \E \left[ \left. \phi_j(X_\Delta^0;A) \right| (X_0^0, \widetilde Z_0^0) \right] - e^{-\lambda_j(A)\Delta} \phi_j(X_0^0;A) \right) \right] \\
&\hspace{-2cm}= 0,
\end{aligned}
\end{equation}
and similarly we obtain
\begin{equation}
\frac1\Delta \sum_{j=1}^J \E^{\varphi^0} \left[ g_j \left( X_0^0, X_\Delta^0, X_0^0; A \right) \right] = 0.
\end{equation}
On the other hand, if $\Delta$ is a power of $\epl$ we study the limit of Proposition \ref{pro:continuity_G_tilde_0_De} and Remark \ref{rem:continuity_G_hat_0_De} evaluated in $a^* = A$ and by \eqref{eq:eigen_problem_general} we have
\begin{equation} \label{eq:limit_score_A_e}
\widetilde{\mathfrak g}_J^0(A) = 0 \qquad \text{and} \qquad \widehat{\mathfrak g}_J^0(A) = 0.
\end{equation}
Moreover, differentiating equation \eqref{eq:martingale_formula} with respect to $a$, we get
\begin{equation} \label{eq:diff_martingale:formula}
\begin{aligned}
\E \left[ \dot \phi_j(X_{t_n}(a);a) | X_{t_{n-1}}(a) = x \right] &= e^{-\lambda_j(a)\Delta} \dot \phi_j(x;a) - \dot \lambda_j(a)\Delta e^{-\lambda_j(a)\Delta} \phi_j(x;a) \\
&\quad - \E \left[\phi'_j(X_{t_n}(a);a) \nabla_a X_{t_n}(a) | X_{t_{n-1}}(a) = x \right],
\end{aligned}
\end{equation}
where the process $\nabla_a X_t(a)$ satisfies
\begin{equation}
\d \left( \nabla_a X_t(a) \right) = - V'(X_t) \dd t - a \cdot V''(X_t) \nabla_a X_t(a) \dd t.
\end{equation}
Therefore, due to \eqref{eq:martingale_formula} and \eqref{eq:diff_martingale:formula} we have
\begin{equation} \label{eq:limit_diff_score_tilde_A_ind}
\frac1\Delta \sum_{j=1}^J \E^{\widetilde \rho^0} \left[ h_j \left( X_0^0, X_\Delta^0, \widetilde Z_0^0; A \right) \right] = - \sum_{j=1}^J \E^{\widetilde \rho^0} \left[ \left( \beta_j(\widetilde Z_0^0;A) \otimes \nabla_a X_\Delta(A) \right) \phi_j'(X_\Delta^0;A) \right],
\end{equation}
and
\begin{equation} \label{eq:limit_diff_score_hat_A_ind}
\frac1\Delta \sum_{j=1}^J \E^{\varphi^0} \left[ h_j \left( X_0^0, X_\Delta^0, X_0^0; A \right) \right] = - \sum_{j=1}^J \E^{\varphi^0} \left[ \left( \beta_j(X_0^0;A) \otimes \nabla_a X_\Delta(A) \right) \phi_j'(X_\Delta^0;A) \right].
\end{equation}
Then due to Lemma \ref{lem:regularity_eigen_a} we can differentiate the eigenvalue problem \eqref{eq:eigen_problem_equation} with respect to $a$ and deduce that
\begin{equation}
\Sigma \dot \phi_j''(x;a) - a \cdot V'(x) \dot \phi_j'(x;a) + \lambda_j(a) \dot \phi_j(x;a) = V'(x) \phi_j'(x;a) - \dot \lambda_j \phi_j(x;a),
\end{equation}
where the dot denotes the gradient with respect to $a$ and which together with \eqref{eq:eigen_problem_equation} implies
\begin{equation} \label{eq:limit_diff_score_tilde_A_e}
\widetilde{\mathfrak h}_J^0(A) = \sum_{j=1}^J \E^{\rho^0} \left[ (\beta_j(Z_0^0;A) \otimes V'(X_0^0)) \phi_j'(X_0^0;A) \right],
\end{equation}
and
\begin{equation} \label{eq:limit_diff_score_hat_A_e}
\widehat{\mathfrak h}_J^0(A) = \sum_{j=1}^J \E^{\varphi^0} \left[ (\beta_j(X_0^0;A) \otimes V'(X_0^0)) \phi_j'(X_0^0;A) \right].
\end{equation}

Before showing the main results, we need two auxiliary lemmas, which in turn rely on the technical Assumption \ref{ass:for_Dini}, which can now be rewritten as
\begin{enumerate}
\item $\det \left( \frac1\Delta \sum_{j=1}^J \E^{\widetilde \rho^0} \left[ h_j \left( X_0^0, X_\Delta^0, \widetilde Z_0^0; A \right) \right] \right) \neq 0$,
\item $\det \left( \frac1\Delta \sum_{j=1}^J \E^{\varphi^0} \left[ h_j \left( X_0^0, X_\Delta^0, X_0^0; A \right) \right] \right) \neq 0$,
\item $\det \left( \widetilde{\mathfrak h}_J^0(A) \right) \neq 0$,
\item $\det \left( \widehat{\mathfrak h}_J^0(A) \right) \neq 0$.
\end{enumerate}
Since the proofs of the two lemmas are similar we only write the details of the first one.

\begin{lemma} \label{lem:dini_tilde}
Under Assumption \ref{ass:beta_functions} and Assumption \ref{ass:for_Dini} there exists $\epl_0>0$ such that for all $0<\epl<\epl_0$ there exists $\widetilde \gamma = \widetilde \gamma(\epl)$ such that if $\Delta$ is independent of $\epl$ or $\Delta=\epl^\zeta$ with $\zeta>0$ and $\zeta\neq1$, $\zeta\neq2$
\begin{equation}
\widetilde{\mathcal G}_J(\epl, A + \widetilde \gamma(\epl)) = 0 \qquad \text{and} \qquad \det \left( \widetilde{\mathcal H}_J(\epl, A + \widetilde \gamma(\epl)) \right) \neq 0.
\end{equation}
Moreover
\begin{equation}
\lim_{\epl\to0} \widetilde \gamma(\epl) = 0.
\end{equation}
\end{lemma}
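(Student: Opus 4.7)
The plan is to apply a version of the implicit function theorem to a continuous extension of $\widetilde{\mathcal G}_J$ to $\epl = 0$. Denote by $\widetilde{\mathcal G}_J(0, a)$ and $\widetilde{\mathcal H}_J(0, a)$ the $\epl \to 0$ limits provided by Proposition \ref{pro:continuity_G_tilde_0_D} (if $\Delta$ is independent of $\epl$) or Proposition \ref{pro:continuity_G_tilde_0_De} (if $\Delta = \epl^\zeta$ with $\zeta > 0$, $\zeta \neq 1$, $\zeta \neq 2$). Combining these two propositions with Proposition \ref{pro:continuity_G_tilde} shows that both $\widetilde{\mathcal G}_J$ and $\widetilde{\mathcal H}_J$ extend to continuous maps on $[0, \infty) \times \mathcal A$, and by Assumption \ref{ass:beta_functions} together with Lemma \ref{lem:regularity_eigen_a} the extended $\widetilde{\mathcal H}_J$ coincides with $\partial_a \widetilde{\mathcal G}_J$ on the interior.

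I would then verify that $(0, A)$ is a nondegenerate zero of $\widetilde{\mathcal G}_J$. First, $\widetilde{\mathcal G}_J(0, A) = 0$: when $\Delta$ is independent of $\epl$, this is \eqref{eq:limit_score_A_ind}, which uses the tower property together with the martingale identity \eqref{eq:martingale_formula}; when $\Delta = \epl^\zeta$, it is \eqref{eq:limit_score_A_e}, an immediate consequence of $\diffL_A \phi_j(\cdot; A) = -\lambda_j(A) \phi_j(\cdot; A)$. Second, $\det \widetilde{\mathcal H}_J(0, A) \neq 0$: by \eqref{eq:limit_diff_score_tilde_A_ind} in the first regime and \eqref{eq:limit_diff_score_tilde_A_e} in the second, the determinant condition reduces precisely to Assumption \ref{ass:for_Dini}(i) and (iii), respectively.

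With these two inputs, the implicit function theorem in its continuous variant (continuity of the map plus continuous differentiability in $a$ and invertibility of the $a$-derivative at the reference point) yields $\epl_0 > 0$ and a continuous function $\epl \mapsto \widetilde a(\epl)$ defined on $[0, \epl_0)$ such that $\widetilde a(0) = A$ and $\widetilde{\mathcal G}_J(\epl, \widetilde a(\epl)) = 0$. Setting $\widetilde \gamma(\epl) \defeq \widetilde a(\epl) - A$ gives $\widetilde \gamma(\epl) \to 0$ as $\epl \to 0$, and the required nondegeneracy $\det \widetilde{\mathcal H}_J(\epl, A + \widetilde \gamma(\epl)) \neq 0$ for small $\epl$ follows from the continuity of $\widetilde{\mathcal H}_J$ at $(0, A)$ combined with $\det \widetilde{\mathcal H}_J(0, A) \neq 0$.

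The main subtlety is that the implicit function theorem is applied at a boundary point of the domain, so a version requiring only continuity in $\epl$ (rather than full $\mathcal C^1$ joint regularity) is needed; this is precisely the spirit of the argument in \cite{KeS99}, and is the reason Assumption \ref{ass:for_Dini} is framed exactly as Condition 4.2(a) there. A second point worth flagging is that the two regimes produce structurally different limits—one an expectation over the triple $(X_0^0, X_\Delta^0, \widetilde Z_0^0)$ under $\widetilde \rho^0$, the other an expectation under $\rho^0$ with the generator $\diffL_A$ appearing explicitly—but in both cases the eigenfunction relation supplies the zero, so a single implicit-function argument covers both cases.
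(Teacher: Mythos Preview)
Your proposal is correct and follows essentially the same approach as the paper: extend $\widetilde{\mathcal G}_J$ and $\widetilde{\mathcal H}_J$ continuously to $\epl=0$ via Propositions \ref{pro:continuity_G_tilde_0_D} or \ref{pro:continuity_G_tilde_0_De}, verify $\widetilde{\mathcal G}_J(0,A)=0$ and $\det\widetilde{\mathcal H}_J(0,A)\neq 0$ using \eqref{eq:limit_score_A_ind}--\eqref{eq:limit_diff_score_tilde_A_e} together with Assumption \ref{ass:for_Dini}, and then invoke the implicit function theorem. Your additional remarks on the boundary-point application of the implicit function theorem and on the two regimes are accurate elaborations, but the core argument matches the paper's exactly.
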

\begin{proof}
Let us first extend the functions $\widetilde{\mathcal G}_J$ and $\widetilde{\mathcal H}_J$ by continuity in $\epl=0$ with their limit given by Proposition \ref{pro:continuity_G_tilde_0_D} and Proposition \ref{pro:continuity_G_tilde_0_De} depending on $\Delta$ and note that due to \eqref{eq:limit_score_A_ind} if $\Delta$ is independent of $\epl$ and \eqref{eq:limit_score_A_e} otherwise, we have
\begin{equation}
\widetilde{\mathcal G}_J(0,A) = 0.
\end{equation}
Moreover, by \eqref{eq:limit_diff_score_tilde_A_ind}, \eqref{eq:limit_diff_score_tilde_A_e} and Assumption \ref{ass:for_Dini}, we know that
\begin{equation}
\det \left( \widetilde{\mathcal H}_J(0,A) \right) \neq 0.
\end{equation}
Therefore, since the functions $\widetilde{\mathcal G}_J$ and $\widetilde{\mathcal H}_J$ are continuous by Proposition \ref{pro:continuity_G_tilde}, the implicit function theorem (see \cite[Theorem 2]{HuR03}) gives the desired result.
\end{proof}

\begin{lemma} \label{lem:dini_hat}
Under Assumption \ref{ass:beta_functions} and Assumption \ref{ass:for_Dini} there exists $\epl_0>0$ such that for all $0<\epl<\epl_0$ there exists $\widehat \gamma = \widehat \gamma(\epl)$ such that if $\Delta$ is independent of $\epl$ or $\Delta = \epl^\zeta$ with $\zeta \in (0,1)$
\begin{equation}
\widehat{\mathcal G}_J(\epl, A + \widehat \gamma(\epl)) = 0 \qquad \text{and} \qquad \det \left( \widehat{\mathcal H}_J(\epl, A + \widehat \gamma(\epl)) \right) \neq 0.
\end{equation}
Moreover
\begin{equation}
\lim_{\epl\to0} \widehat \gamma(\epl) = 0.
\end{equation}
\end{lemma}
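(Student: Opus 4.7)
The proof mirrors that of Lemma \ref{lem:dini_tilde}, with the filtered objects $\widetilde{\mathcal G}_J, \widetilde{\mathcal H}_J$ replaced by $\widehat{\mathcal G}_J, \widehat{\mathcal H}_J$, and with the role of Propositions \ref{pro:continuity_G_tilde}, \ref{pro:continuity_G_tilde_0_D}, \ref{pro:continuity_G_tilde_0_De} played by Remarks \ref{rem:continuity_G_tilde}, \ref{rem:continuity_G_hat_0_D}, \ref{rem:continuity_G_hat_0_De}. The plan is to extend $\widehat{\mathcal G}_J$ and $\widehat{\mathcal H}_J$ by continuity to $[0,\infty) \times \mathcal A$, verify that $(\epl, a) = (0, A)$ is a regular zero of $\widehat{\mathcal G}_J$, and then conclude via the implicit function theorem.

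For the extension, joint continuity on $(0,\infty) \times \mathcal A$ is provided by the unfiltered version of Proposition \ref{pro:continuity_G_tilde} recorded in Remark \ref{rem:continuity_G_tilde}, while the limits as $\epl \to 0$ come from Remark \ref{rem:continuity_G_hat_0_D} when $\Delta$ is independent of $\epl$ and from Remark \ref{rem:continuity_G_hat_0_De} when $\Delta = \epl^\zeta$ with $\zeta \in (0,1)$. The restriction to these two regimes is essential at this step: in the complementary regime $\zeta > 2$ treated at the end of Remark \ref{rem:continuity_G_hat_0_De}, the limit $\mathfrak g_J^0$ in \eqref{eq:continuity_G_hat_0_De_different} is built from the multiscale generator and vanishes at $a = \alpha$ rather than at $a = A$, which is precisely the source of the bias of the unfiltered estimator in the multiscale regime (and the very reason for introducing filtered data).

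It remains to check that $\widehat{\mathcal G}_J(0, A) = 0$ and that $\widehat{\mathcal H}_J(0, A)$ is invertible. When $\Delta$ is independent of $\epl$, conditioning on $X_0^0$ inside $\E^{\varphi^0}[g_j(X_0^0, X_\Delta^0, X_0^0; A)]$ and applying the martingale identity \eqref{eq:martingale_formula} yields $\widehat{\mathcal G}_J(0, A) = 0$ in direct analogy with \eqref{eq:limit_score_A_ind}, while \eqref{eq:limit_diff_score_hat_A_ind} identifies $\widehat{\mathcal H}_J(0, A)$ with the matrix whose nondegeneracy is postulated in Assumption \ref{ass:for_Dini}(ii). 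When $\zeta \in (0,1)$, the factor $\diffL_A \phi_j(\cdot; A) + \lambda_j(A) \phi_j(\cdot; A)$ appearing inside $\widehat{\mathfrak g}_J^0(A)$ in Remark \ref{rem:continuity_G_hat_0_De} vanishes identically by the eigenvalue equation \eqref{eq:eigen_problem_general}, and the Jacobian reduces via \eqref{eq:limit_diff_score_hat_A_e} to a matrix covered by Assumption \ref{ass:for_Dini}(iv).

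With $\widehat{\mathcal G}_J$ continuous at $(0, A)$, $\widehat{\mathcal G}_J(0, A) = 0$, and $\widehat{\mathcal H}_J(0, A)$ invertible, the implicit function theorem produces a continuous branch $\widehat \gamma = \widehat \gamma(\epl)$ defined on a one-sided neighbourhood $[0, \epl_0)$ of the origin, satisfying $\widehat{\mathcal G}_J(\epl, A + \widehat \gamma(\epl)) = 0$, $\widehat \gamma(0) = 0$, and, after shrinking $\epl_0$ if necessary, $\det(\widehat{\mathcal H}_J(\epl, A + \widehat \gamma(\epl))) \neq 0$. I do not foresee any genuine technical obstacle beyond carefully matching each step with its filtered counterpart, since, as the authors themselves note, Lemma \ref{lem:dini_hat} is proved by a direct transcription of the argument for Lemma \ref{lem:dini_tilde}; the only subtlety is the narrower admissible range of $\Delta$, which, as explained above, is forced by the structure of the limit in Remark \ref{rem:continuity_G_hat_0_De}.
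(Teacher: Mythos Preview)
Your proposal is correct and follows exactly the approach the paper intends: transcribe the proof of Lemma \ref{lem:dini_tilde}, replacing Propositions \ref{pro:continuity_G_tilde}, \ref{pro:continuity_G_tilde_0_D}, \ref{pro:continuity_G_tilde_0_De} by their unfiltered analogues in Remarks \ref{rem:continuity_G_tilde}, \ref{rem:continuity_G_hat_0_D}, \ref{rem:continuity_G_hat_0_De}, and invoking parts (ii) and (iv) of Assumption \ref{ass:for_Dini} in place of (i) and (iii). One small inaccuracy in your explanatory aside: in the regime $\zeta>2$ the limit $\mathfrak g_J^0$ in \eqref{eq:continuity_G_hat_0_De_different} does \emph{not} in general vanish at $a=\alpha$; as Remark \ref{rem:biasedness} points out, the root $a^*$ is typically neither $A$ nor $\alpha$, the identification $a^*=\alpha$ being special to the Ornstein--Uhlenbeck case.
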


We are now ready to prove the asymptotic unbiasedess of the estimators, i.e., Theorem \ref{thm:unbiasedness_hat} and Theorem \ref{thm:unbiasedness_tilde}. We only prove Theorem \ref{thm:unbiasedness_tilde} for the estimator $\widetilde A^\epl_{N,J}$ with filtered data. The proof of Theorem \ref{thm:unbiasedness_hat} for the estimator $\widehat A^\epl_{N,J}$ without filtered data is analogous and is omitted here.

\begin{proof}[Proof of Theorem \ref{thm:unbiasedness_tilde}]
We need to show for a fixed $0 < \epl < \epl_0$:
\begin{enumerate}
\item the existence of the solution $\widetilde A^\epl_{N,J}$ of the system $\widetilde G^\epl_{N,J}(a) = 0$ with probability tending to one as $N \to \infty$;
\item $\lim_{N \to \infty} \widetilde A^\epl_{N,J} = A + \widetilde \gamma(\epl)$ in probability with $\lim_{\epl \to 0} \widetilde \gamma(\epl) = 0$.
\end{enumerate}
We first note that by Lemma \ref{lem:dini_tilde} we have
\begin{equation}
\lim_{\epl \to 0} \widetilde \gamma(\epl) = 0.
\end{equation}
We then follow the steps of the proof of \cite[Theorem 3.2]{BiS95}. Due to \cite[Theorem A.1]{BaS94}, claims (i) and (ii) hold true if we verify that
\begin{equation} \label{eq:final_prove_result1}
\lim_{N \to \infty} \sup_{a \in B^\epl_{C,N}} \norm{\frac1N \dot{\widetilde G}^\epl_{N,J}(a) - \widetilde{\mathcal H}_J(\epl, A + \widetilde \gamma(\epl))} = 0, \qquad \text{in probability},
\end{equation}
and as $N \to \infty$
\begin{equation} \label{eq:final_prove_result2}
\frac{1}{\sqrt N} \widetilde G^\epl_{N,J}(A + \widetilde \gamma(\epl)) \to \mathcal N \left( 0, \Lambda^\epl \right), \qquad \text{in law},
\end{equation}
where $\Lambda^\epl$ is a positive definite covariance matrix and
\begin{equation}
B_{C,N}^\epl = \left\{ a \in \mathcal A \colon \norm{a - (A + \widetilde \gamma(\epl))} \le \frac{C}{\sqrt{N}} \right\},
\end{equation}
for $C>0$ small enough such that $B_{C,1} \subset \mathcal A$. Result \eqref{eq:final_prove_result2} is a consequence of \cite[Theorem 1]{Flo89}. We then have
\begin{equation}
\begin{aligned}
\sup_{a \in B^\epl_{C,N}} \norm{\frac1N \dot{\widetilde G}^\epl_{N,J}(a) - \widetilde{\mathcal H}_J(\epl, A + \widetilde \gamma(\epl))} &\le \sup_{a \in B^\epl_{C,1}} \norm{\frac{1}{N\Delta} \sum_{i=0}^{N-1} \sum_{j=1}^J h_j(\widetilde X_n^\epl, \widetilde X_{n+1}^\epl, \widetilde Z^\epl_n; a) - \widetilde{\mathcal H}_J(\epl, a)} \\
&\quad + \sup_{a \in B^\epl_{C,N}} \norm{\widetilde{\mathcal H}_J(\epl, a) - \widetilde{\mathcal H}_J(\epl, A + \widetilde \gamma(\epl))},
\end{aligned}
\end{equation}
where the right-hand side vanishes by \cite[Lemma 3.3]{BiS95} and the continuity of $\widetilde{\mathcal H}$ (Proposition \ref{pro:continuity_G_tilde}), implying result \eqref{eq:final_prove_result1}. Hence, we proved (i) and (ii), which conclude the proof of the theorem.
\end{proof}

\begin{remark} \label{rem:biasedness}
Notice that if $\Delta=\epl^\zeta$ with $\zeta>2$ and we do not employ the filter, in view of \eqref{eq:continuity_G_hat_0_De_different} and following the same proof of Theorem \ref{thm:unbiasedness_tilde}, we could compute the asymptotic limit of $\widehat A_{N,J}^\epl$ as $N$ goes to infinity and $\epl$ vanishes if we knew $a^*$ such that
\begin{equation}
\sum_{j=1}^J \E^{\varphi^0} \left[ \beta_j(X_0^0;a^*) \left( \sigma \phi_j''(X_0^0;a^*) - \alpha \cdot V'(X_0^0) \phi_j'(X_0^0;a^*) + \lambda_j(a^*) \phi_j(X_0^0;a^*) \right) \right] = 0.
\end{equation}
The value of $a^*$ can not be found analytically since it is, in general, different from the drift coefficients $\alpha$ and $A$ of the multiscale and homogenized equations \eqref{eq:SDE_MS} and \eqref{eq:SDE_H}. Nevertheless, we observe that in the simple scale of the multiscale Ornstein-Uhlenbeck process we have $a^* = \alpha$.
\end{remark}

\section{Conclusion}

In this work we presented new estimators for learning the effective drift coefficient of the homogenized Langevin dynamics when we are given discrete observations from the original multiscale diffusion process. Our approach relies on a martingale estimating function based on the eigenvalues and eigenfunctions of the generator of the coarse-grained model and on a linear time-invariant filter from the exponential family, which is employed to smooth the original data. We studied theoretically the convergence properties of our estimators when the sample size goes to infinity and the multiscale parameter describing the fastest scale vanishes. In Theorem \ref{thm:unbiasedness_hat} and Theorem \ref{thm:unbiasedness_tilde} we proved respectively the asymptotic unbiasedness of the estimators with and without filtered data. We remark that the former is not robust with respect to the sampling rate at finite multiscale parameter while the estimator with filtered data is robust independently of the sampling rate. We analysed numerically the dependence of our estimators on the number of observations and the number of eigenfunctions employed in the estimating function noticing that the first eigenvalues in magnitude are sufficient to approximate the drift coefficient. Moreover, we performed several numerical experiments, which highlighted the effectiveness of our approach and confirmed our theoretical results. We believe that eigenfunction estimators can be very useful in applications, for example to multiparticle systems and their mean field limit \cite{GoP18}, since the eigenvalue problem for the generator of a reversible Markov process is a very well studied problem. This means, in particular, that it is possible to study rigorously the proposed estimators and to prove asymptotic unbiasedness and asymptotic normality. Furthermore, in order to be able to assess the accuracy of the estimators, we could analyse its rate of convergence with respect to both the number of observations and the fastest scale. This is a highly nontrivial problem since it first requires the development of a fully quantitative periodic homogenization theory and we will return to this problem in future work. Finally, we think that it would be interesting to extend our estimators to the non-parametric framework and consider more general multiscale models.

\subsection*{Acknowledgements} 

We thank the anonymous referees for useful comments and suggestions.

AA and AZ are partially supported by the Swiss National Science Foundation, under grant No. 200020\_172710. The work of GAP was partially funded by the EPSRC, grant number EP/P031587/1, and by JPMorgan Chase \& Co. Any views or opinions expressed herein are solely those of the authors listed, and may differ from the views and opinions expressed by JPMorgan Chase \& Co. or its affiliates. This material is not a product of the Research Department of J.P. Morgan Securities LLC. This material does not constitute a solicitation or offer in any jurisdiction.

\begin{appendices}
	
\section{Technical results}

In this section we prove some technical results which are used to show the unbiasedness of the estimators $\widehat A^\epl_{N,J}$ and $\widetilde A^\epl_{N,J}$. We first study the properties of the filter applied to discrete data and then we focus on the regularity of the eigenfunctions and eigenvalues of the generator. We finally prove a formula which can be interpreted as an approximation of the Itô's lemma.

\subsection{Application of the filter to discrete data} \label{app:technical_results}

The following result quantifies the expected distance among the continuous process $Z^\epl_t$ and the filtered observations $\widetilde Z^\epl_n$.

\begin{lemma} \label{lem:distance_Z_Ztilde}
Let $0<\Delta<1$, $N$ be a positive integer and let $\widetilde Z_n^\epl$ and $Z_t^\epl$ be defined respectively in \eqref{eq:Z_tilde} and \eqref{eq:Z} with $\widetilde X^\epl_0 = X^\epl_0$ distributed according to its invariant measure $\varphi^\epl$. Then there exists a constant $C>0$ independent of $\epl$, $\Delta$ and $N$ such that for all $n = 0, \dots, N$ and for all $p\ge1$
\begin{equation}
\left( \E^{\varphi^\epl} \abs{Z_{n\Delta}^\epl - \widetilde Z_n^\epl}^p \right)^{1/p} \le C \left( \Delta^{1/2} + \min \left\{ \epl, \Delta\epl^{-1} \right\} \right),
\end{equation}
where $\E^{\varphi^\epl}$ denotes the expectation with respect to the Wiener measure and the fact that $X^\epl_0$ is distributed according to $\varphi^\epl$.
\end{lemma}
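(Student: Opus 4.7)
The plan is to start from the decomposition
\begin{equation}
Z^\epl_{n\Delta} - \widetilde Z^\epl_n = \sum_{k=0}^{n-1} \int_{t_k}^{t_{k+1}} \left[ e^{-(t_n-s)} X^\epl_s - e^{-(t_n-t_k)} X^\epl_{t_k} \right] \dd s \eqdef E_1(n) + E_2(n),
\end{equation}
with $t_k = k\Delta$ and
\begin{equation}
E_1(n) = \sum_{k=0}^{n-1} \int_{t_k}^{t_{k+1}} \bigl[ e^{-(t_n-s)} - e^{-(t_n-t_k)} \bigr] X^\epl_s \dd s, \quad E_2(n) = \sum_{k=0}^{n-1} \int_{t_k}^{t_{k+1}} e^{-(t_n-t_k)} \bigl( X^\epl_s - X^\epl_{t_k} \bigr) \dd s.
\end{equation}
A first-order Taylor expansion yields $\abs{e^{-(t_n-s)} - e^{-(t_n-t_k)}} \le (s-t_k) e^{-(t_n-t_k)}$ on $[t_k,t_{k+1}]$. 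Combined with the uniform $L^p$ bound on $X^\epl_s$ under $\varphi^\epl$ from \cite[Corollary 5.4]{PaS07} and the geometric sum $\Delta \sum_k e^{-(t_n-t_k)} \le C$, this immediately gives $(\E^{\varphi^\epl} \abs{E_1(n)}^p)^{1/p} \le C\Delta \le C\Delta^{1/2}$ (using $\Delta<1$).

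The core task is to estimate $E_2(n)$ by bounding the increment $X^\epl_s - X^\epl_{t_k}$ in two alternative ways and then taking the minimum. The \emph{naive} estimate integrates \eqref{eq:SDE_MS} directly: since $p'$ is bounded and $V'(X^\epl_r)$ has $L^p$-moments uniformly bounded in $r$ and $\epl$, this yields $\norm{X_s^\epl - X_{t_k}^\epl}_{L^p} \le C(\Delta\epl^{-1} + \Delta^{1/2})$, where $\Delta\epl^{-1}$ comes from the singular drift and $\Delta^{1/2}$ from the Brownian contribution. The \emph{corrector-based} estimate applies Itô's formula to $\epl\Phi(X^\epl_t/\epl)$ together with the cell-problem identity $\sigma\Phi''(y) - p'(y)\Phi'(y) = p'(y)$ from \eqref{eq:cell_problem}, which yields the representation (with $Y^\epl_t = X^\epl_t/\epl$)
\begin{equation}
X_s^\epl - X_{t_k}^\epl = -\alpha \int_{t_k}^{s} V'(X_r^\epl) (1+\Phi'(Y_r^\epl)) \dd r - \epl\bigl[\Phi(Y_s^\epl) - \Phi(Y_{t_k}^\epl)\bigr] + \sqrt{2\sigma} \int_{t_k}^{s} (1+\Phi'(Y_r^\epl)) \dd W_r.
\end{equation}
Since $\Phi$ is $L$-periodic and smooth, so that $\Phi$ and $\Phi'$ are bounded, this gives $\norm{X_s^\epl - X_{t_k}^\epl}_{L^p} \le C(\epl + \Delta^{1/2})$. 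Taking the minimum of the two bounds produces $\norm{X_s^\epl - X_{t_k}^\epl}_{L^p} \le C(\Delta^{1/2} + \min\{\epl, \Delta\epl^{-1}\})$, and the triangle inequality then yields
\begin{equation}
\bigl( \E^{\varphi^\epl} \abs{E_2(n)}^p \bigr)^{1/p} \le \sum_{k=0}^{n-1} \int_{t_k}^{t_{k+1}} e^{-(t_n-t_k)} \norm{X^\epl_s - X^\epl_{t_k}}_{L^p} \dd s \le C \bigl( \Delta^{1/2} + \min\{\epl, \Delta\epl^{-1}\} \bigr).
\end{equation}
Combined with the estimate on $E_1(n)$, this gives the claim.

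The main creative step is the introduction of the corrector $\Phi$: without it the singular $\epl^{-1} p'$ drift can only be controlled by the naive bound $\Delta\epl^{-1}$, which blows up exactly in the regime $\Delta \gg \epl^2$ that is most relevant to the proof of Proposition \ref{pro:continuity_G_tilde_0_De}. The cell-problem identity \eqref{eq:cell_problem} absorbs the fast scale into a bounded boundary term of size $\OO(\epl)$, plus pieces that inherit the regularity of the homogenized SDE. The only remaining care is to verify that all constants are uniform in $\epl$, which is ensured by stationarity under $\varphi^\epl$ and the uniform moment bounds of \cite[Corollary 5.4]{PaS07}.
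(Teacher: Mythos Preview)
Your proof is correct and follows essentially the same approach as the paper: the same splitting into a kernel-difference term and an increment term, the same two alternative bounds on $\norm{X^\epl_s - X^\epl_{t_k}}_{L^p}$ (naive integration of the SDE versus the corrector-based representation), and the same use of the geometric sum $\sum_k \Delta e^{-(t_n-t_k)} \le C$ to obtain constants uniform in $n$. The only cosmetic differences are that the paper swaps the labels $E_1,E_2$, uses the algebraically equivalent splitting $e^{-(t_n-s)}(X_s^\epl - X_{t_k}^\epl) + (e^{-(t_n-s)} - e^{-(t_n-t_k)})X_{t_k}^\epl$, works with $p$-th moments via Jensen rather than Minkowski in $L^p$, and packages your explicit corrector computation into a citation of \cite[Lemma~6.1]{PaS07}.
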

\begin{proof}
In order to simplify the notation, let us define the quantity
\begin{equation}
E \defeq \E^{\varphi^\epl}\abs{Z_{n\Delta}^\epl - \widetilde Z_n^\epl}^p,
\end{equation}
which is equivalent to
\begin{equation}
E = \E^{\varphi^\epl} \abs{\sum_{k=0}^{n-1} \int_{k\Delta}^{(k+1)\Delta} \left( e^{-(n\Delta - s)} X_s^\epl - e^{-\Delta(n-k)} \widetilde X_k^\epl \right) \dd s}^p.
\end{equation}
Then by Jensen's inequality applied to the convex function $y \mapsto \abs{y}^p$ and since $X^\epl_{k\Delta} = \widetilde X^\epl_k$ we have
\begin{equation} \label{eq:decomposition_E}
\begin{aligned}
E &\le 2^{p-1} \E^{\varphi^\epl} \left( \sum_{k=0}^{n-1} \int_{k\Delta}^{(k+1)\Delta} e^{-(n\Delta - s)} \abs{X_s^\epl - X_{k\Delta}^\epl} \dd s \right)^p \\
&\qquad + 2^{p-1} \E^{\varphi^\epl} \left( \sum_{k=0}^{n-1} \int_{k\Delta}^{(k+1)\Delta} \left( e^{-(n\Delta-s)} - e^{-\Delta(n-k)} \right) \dd s \abs{\widetilde X_k^\epl} \right)^p \\
&\eqdef 2^{p-1} \left( E_1 + E_2 \right).
\end{aligned}
\end{equation}
We now study the two terms separately. Applying \cite[Lemma B.1]{AGP20} we first get
\begin{equation} \label{eq:bound_E1_intermediate}
\begin{aligned}
E_1 &= \E^{\varphi^\epl} \left( \int_0^{n\Delta} e^{-(n\Delta-s)} \abs{X_s^\epl - \sum_{k=0}^{n-1} X_{k\Delta}^\epl \chi_{[k\Delta, (k+1)\Delta)}(s)} \dd s \right)^p \\
&\le \int_0^{n\Delta} e^{-(n\Delta-s)} \E^{\varphi^\epl} \abs{X_s^\epl - \sum_{k=0}^{n-1} X_{k\Delta}^\epl \chi_{[k\Delta, (k+1)\Delta)}(s)}^p \dd s,
\end{aligned}
\end{equation}
and, in order to bound the term inside the integral, we can follow two different procedures. Either we employ \cite[Lemma 6.1]{PaS07}, which gives
\begin{equation} \label{eq:bound_E1_first}
\E^{\varphi^\epl} \abs{X_s^\epl - \sum_{k=0}^{n-1} X_{k\Delta}^\epl \chi_{[k\Delta, (k+1)\Delta)}(s)}^p \le C \left( \Delta^p + \Delta^{p/2} + \epl^p \right),
\end{equation}
where $C > 0$ is a constant independent of $\epl$ and $\Delta$ or we notice that, since $X_t^\epl$ has bounded moments of any order by \cite[Corollary 5.4]{PaS07} and $p$ is bounded, it holds for all $s \in [k\Delta, (k+1)\Delta)$
\begin{equation} \label{eq:bound_E1_second}
\begin{aligned}
\E^{\varphi^\epl} \abs{X_s^\epl - X_{k\Delta}^\epl}^p &= \E^{\varphi^\epl} \abs{-\alpha \int_{k\Delta}^s X_r^\epl \dd r - \frac1\epl \int_{k\Delta}^s p'\left( \frac{X_r^\epl}{\epl} \right) \dd r + \sqrt{2\sigma}W_s }^p \\
&\le C \left( \Delta^p + \Delta^p \epl^{-p} + \Delta^{p/2} \right).
\end{aligned}
\end{equation}
Therefore, due to \eqref{eq:bound_E1_intermediate}, \eqref{eq:bound_E1_first} and \eqref{eq:bound_E1_second}, we obtain
\begin{equation} \label{eq:bound_E1}
E_1 \le C \left( \Delta^{p/2} + \min \left\{ \epl^p, \Delta^p \epl^{-p} \right\} \right).
\end{equation}
Let us now consider $E_2$, which can be first bounded by
\begin{equation}
E_2 \le \Delta^p \E^{\varphi^\epl} \left( \sum_{k=0}^{n-1} \left( e^{-\Delta(n-1-k)} - e^{-\Delta(n-k)} \right) \abs{\widetilde X_k^\epl} \right)^p,
\end{equation}
and note that
\begin{equation}
\sum_{k=0}^{n-1} \left( e^{-\Delta(n-1-k)} - e^{-\Delta(n-k)} \right) = \sum_{k=0}^{n-1} \left( e^{-\Delta k} - e^{-\Delta(k+1)} \right) = 1 - e^{-\Delta n}.
\end{equation}
Therefore, applying Jensen's inequality and due to the fact that $\widetilde X_k^\epl$ has bounded moments of any order by \cite[Corollary 5.4]{PaS07} we have 
\begin{equation} \label{eq:bound_E2}
E_2 \le \Delta^p (1 - e^{-\Delta n})^{p-1} \sum_{k=0}^{n-1} \left( e^{-\Delta(n-1-k)} - e^{-\Delta(n-k)} \right) \E^{\varphi^\epl} \abs{\widetilde X_k^\epl}^p \le C \Delta^p,
\end{equation}
which, together with \eqref{eq:decomposition_E} and \eqref{eq:bound_E1}, gives the desired result.
\end{proof}

We now show the ergodicity of the process $(\widetilde X^\epl_n, \widetilde Z^\epl_n)$, where the first component is a sample from the continuous-time process, i.e. $\widetilde X^\epl_n = X_{n\Delta}^\epl$, while the second component is computed starting from the discrete observations $\widetilde X^\epl_n$.

\begin{lemma} \label{lem:ergodicity_XZ_tilde}
Let $\Delta>0$ and let Assumption \ref{ass:dissipative_setting} hold. Then the couple $(\widetilde X_n^\epl, \widetilde Z_n^\epl)$, where $\widetilde X_n^\epl$ is a sample from the continuous process \eqref{eq:SDE_MS} and $\widetilde Z_n^\epl$ is defined in \eqref{eq:Z_tilde}, admits a unique invariant measure with density with respect to the Lebesgue measure denoted by $\widetilde \rho^\epl = \widetilde \rho^\epl(x,z)$. Moreover, if $\Delta$ is independent of $\epl$, it converges in law to the two-dimensional process $(\widetilde X_n^0, \widetilde Z_n^0)$ with $\widetilde \rho^0 = \widetilde \rho^0(x,z)$ as density of the invariant measure.
\end{lemma}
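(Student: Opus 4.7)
The plan is to recognize $(\widetilde X_n^\epl, \widetilde Z_n^\epl)$ as a time-homogeneous Markov chain on $\R^2$ and apply Meyn--Tweedie theory. From \eqref{eq:Z_tilde_1} we obtain the explicit recursion
\begin{equation}
\widetilde Z_{n+1}^\epl = e^{-\Delta} \widetilde Z_n^\epl + \Delta e^{-\Delta} \widetilde X_n^\epl,
\end{equation}
so the $z$-coordinate updates deterministically given the present state, while $\widetilde X_{n+1}^\epl$ is drawn from the law of $X_\Delta^\epl$ conditioned on $X_0^\epl = \widetilde X_n^\epl$.

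\emph{Step 1 (Foster drift).} I would take $V(x,z) = 1 + x^2 + z^2$. The dissipativity in Assumption \ref{ass:dissipative_setting}\ref{ass:regularity_diss}, combined with standard Itô-based manipulations (along the lines of the proof of \cite[Corollary 5.4]{PaS07}), yields $\E[|X_\Delta^\epl|^2 \mid \widetilde X_0^\epl = x] \le \theta x^2 + C_1$ with $\theta = \theta(\Delta) \in (0,1)$ and $C_1$ depending only on the data, not on $\epl$. Combining this with the elementary inequality $(e^{-\Delta} z + \Delta e^{-\Delta} x)^2 \le (1+\eta) e^{-2\Delta} z^2 + (1 + \eta^{-1}) \Delta^2 e^{-2\Delta} x^2$ and taking $\eta$ small enough gives
\begin{equation}
P V(x,z) \le \lambda V(x,z) + K
\end{equation}
for some $\lambda \in (0,1)$ and $K$ independent of $\epl$.

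\emph{Step 2 (density of the two-step kernel, irreducibility).} The one-step kernel is singular because the $z$-update is deterministic, but iterating once more, starting from $(x_0, z_0)$ yields $z_2 = e^{-2\Delta} z_0 + \Delta e^{-2\Delta} x_0 + \Delta e^{-\Delta} x_1$, so $x_1$ can be recovered smoothly from $z_2$. The change of variables $(x_1, x_2) \mapsto (z_2, x_2)$ with Jacobian $\Delta e^{-\Delta}$ shows that the two-step kernel $P^{(2)}((x_0, z_0), \cdot)$ admits a strictly positive density on $\R^2$, obtained by composing two SDE transition densities, which are positive everywhere by uniform ellipticity of the Langevin dynamics. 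This provides $\psi$-irreducibility with respect to Lebesgue measure, aperiodicity, and the small-set property for every compact set. Combined with Step 1, Meyn--Tweedie theory then gives geometric ergodicity and the existence of a unique invariant measure $\widetilde \rho^\epl$. Absolute continuity of $\widetilde \rho^\epl$ follows from the invariance relation $\widetilde \rho^\epl = \int P^{(2)}((x,z), \cdot)\, \widetilde \rho^\epl(dx, dz)$ and the density of $P^{(2)}$.

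\emph{Step 3 (limit $\epl \to 0$).} For $\Delta$ independent of $\epl$, the same argument applied to \eqref{eq:systemSDE_H} produces the unique invariant density $\widetilde \rho^0$. Since the constants $\lambda, K$ in Step 1 and the lower bound on $P^{(2)}$ over compact sets in Step 2 can be chosen uniformly in $\epl$, the family $\{\widetilde \rho^\epl\}$ is tight and the chains are uniformly geometrically ergodic. The convergence in law of $(X_t^\epl, Z_t^\epl)$ to $(X_t^0, Z_t^0)$ in $\mathcal C^0([0,T]; \R^2)$ established in \cite[Section 3.2]{AGP20} propagates, by the continuous mapping theorem, to convergence of the finite-dimensional distributions of $(\widetilde X_n^\epl, \widetilde Z_n^\epl)$, using that $\widetilde Z_n^\epl$ is a continuous functional of the sampled path. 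Interchanging $n \to \infty$ and $\epl \to 0$, justified by uniform ergodicity, identifies every weak subsequential limit of $\widetilde \rho^\epl$ with $\widetilde \rho^0$, giving the stated convergence in law. The main obstacle will be making the minorization of $P^{(2)}$ and the drift constants genuinely uniform in $\epl$; both properties are plausible given the $\epl$-independent structure of Assumption \ref{ass:dissipative_setting}, but will require quantitative Gaussian-type bounds on the SDE transition density that are not explicitly recorded in the excerpt.
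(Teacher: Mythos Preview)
Your approach via Meyn--Tweedie theory is sound and would go through, but the paper takes a much shorter route. Having derived the same recursion
\[
\widetilde Z_{n+1}^\epl = e^{-\Delta}\widetilde Z_n^\epl + \Delta e^{-\Delta}\widetilde X_n^\epl,
\]
the paper simply observes that $(\widetilde X_n^\epl)_n$ is already a stationary ergodic sequence (inherited from geometric ergodicity of the continuous-time process) and applies Brandt's theorem \cite{Bra86} on the random-coefficient autoregression $Y_{n+1}=A_nY_n+B_n$ with $A_n\equiv e^{-\Delta}$ and $B_n=\Delta e^{-\Delta}\widetilde X_n^\epl$; the contraction condition $\E\log|A_n|=-\Delta<0$ immediately gives existence and uniqueness of a stationary solution for $\widetilde Z_n^\epl$, hence of an invariant law for the pair. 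The convergence in law for $\epl\to0$ is then dispatched by homogenization in one line.

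The comparison is instructive. Your argument treats the joint chain directly and must therefore manufacture a Lyapunov function and a minorization for $P^{(2)}$, and---as you correctly flag---making these uniform in $\epl$ (especially the density lower bound, since the drift contains $\epl^{-1}p'(x/\epl)$) is nontrivial. The paper sidesteps all of this by \emph{conditioning on the $x$-marginal}: once $(\widetilde X_n^\epl)_n$ is known to be stationary and ergodic, Brandt's theorem handles the $z$-component without any small-set or drift analysis, and no $\epl$-uniform heat-kernel bounds are needed. On the other hand, your route yields more: geometric ergodicity of the joint chain and an explicit mechanism for the density of $\widetilde\rho^\epl$ via the two-step kernel, neither of which the paper's proof supplies (the density claim is asserted but not argued there). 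So your proof is heavier but more self-contained; the paper's is shorter but leans on \cite{Bra86} and leaves the density of $\widetilde\rho^\epl$ implicit.
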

\begin{proof}
By definition \eqref{eq:Z_tilde_1} we obtain the following stochastic difference equation
\begin{equation} \label{eq:diff_eq_tilde}
\widetilde Z^\epl_{n+1} = e^{-\Delta} \widetilde Z^\epl_n + \Delta e^{-\Delta} \widetilde X^\epl_n,
\end{equation}
where $\widetilde X^\epl_n$ is a stationary and ergodic sequence. Observing that $\log e^{-\Delta} = -\Delta < 0$, applying Theorem 1 and in view of Remark 1.3 in \cite{Bra86} we deduce the existence of a unique invariant measure for the couple $(\widetilde X_n^\epl, \widetilde Z_n^\epl)$. Let us notice that in the theorem the sequence $\widetilde X_n^\epl$ must be defined for all $n\in\Z$ while in our framework $n\in\N$, but let us also remark that any stationary process indexed by $\N$ can be extended to one indexed by $\Z$ in an essentially unique way. Moreover, if $\Delta$ is independent of $\epl$, the same reasoning can be repeated to get the existence of a unique invariant measure for the couple $(\widetilde X_n^0, \widetilde Z_n^0)$. Finally, standard homogenization theory implies the weak convergence of $\widetilde \rho^\epl$ to $\widetilde \rho^0$, which concludes the proof.
\end{proof}

Let us now denote the marginal invariant distributions of $Z^\epl, \widetilde Z^\epl$ and $Z^0, \widetilde Z^0$ respectively by $\psi^\epl = \psi^\epl(z), \widetilde \psi^\epl = \widetilde \psi^\epl(z)$ and $\psi^0 = \psi^0(z), \widetilde \psi^0 = \widetilde \psi^0(z)$.

\begin{corollary} \label{cor:asymptotic_distances_Z}
Let $Z^\epl$ and $\widetilde Z^\epl$ be distributed respectively according to $\psi^\epl$ and $\widetilde \psi^\epl$. Then there exists a constant $C>0$ independent of $\epl$ and $\Delta$ such that
\begin{equation}
\left( \E \abs{Z^\epl - \widetilde Z^\epl}^p \right)^{1/p} \le C \left( \Delta^{1/2} + \min \left\{ \epl, \Delta\epl^{-1} \right\} \right).
\end{equation}
\end{corollary}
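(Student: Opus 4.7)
The plan is to deduce the stationary bound from the non-stationary one established in Lemma \ref{lem:distance_Z_Ztilde} by passing to the limit $n \to \infty$. The input to both filters is the multiscale process $X_t^\epl$; I would initialize the SDE \eqref{eq:SDE_MS} at its invariant distribution $\varphi^\epl$, so that $X_t^\epl$ becomes stationary while the filtered processes $Z_t^\epl$ and $\widetilde Z_n^\epl$, defined by \eqref{eq:Z} and \eqref{eq:Z_tilde} with initial value $0$, are only asymptotically stationary. The key observation is that the bound provided by Lemma \ref{lem:distance_Z_Ztilde} is uniform in $n$, so if one can transfer it to the limiting law one obtains exactly the desired inequality.

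First, I would argue joint convergence in law. Because $X_t^\epl$ is ergodic with invariant density $\varphi^\epl$, because the continuous filter $Z_t^\epl$ converges in law to $\psi^\epl$ (as used in \cite{AGP20}), and because the discrete filter $\widetilde Z_n^\epl$ converges in law to $\widetilde \psi^\epl$ by Lemma \ref{lem:ergodicity_XZ_tilde}, the natural three-dimensional process $(X_{n\Delta}^\epl, Z_{n\Delta}^\epl, \widetilde Z_n^\epl)$, all driven by the same Brownian motion $W_t$, converges in law as $n \to \infty$ to some distribution $\pi^\epl$ whose marginals are $\varphi^\epl$, $\psi^\epl$ and $\widetilde\psi^\epl$ respectively. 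This is the natural coupling implicit in the statement of the corollary, and the cleanest way to exhibit it is to introduce stationary two-sided versions
\begin{equation}
Z_{n\Delta}^{\epl,\infty} = \int_{-\infty}^{n\Delta} e^{-(n\Delta-s)} X_s^\epl \dd s, \qquad \widetilde Z_n^{\epl,\infty} = \Delta \sum_{k=-\infty}^{n-1} e^{-\Delta(n-k)} X_{k\Delta}^\epl,
\end{equation}
and to check that $Z_{n\Delta}^\epl - Z_{n\Delta}^{\epl,\infty}$ and $\widetilde Z_n^\epl - \widetilde Z_n^{\epl,\infty}$ decay exponentially in $L^p$ (indeed their $L^p$ norms are bounded by the contraction factors $e^{-n\Delta}$ and $e^{-\Delta n}$ times finite stationary moments, controlled by Lemma \ref{lem:Ztilde_bounded_moments} and the analogous bound for $Z$).

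Second, I would pass to the limit using Fatou's lemma. Since $y \mapsto |y|^p$ is non-negative and continuous, the joint convergence in law of $(Z_{n\Delta}^\epl, \widetilde Z_n^\epl)$ to $(Z^\epl, \widetilde Z^\epl)$ combined with the uniform-in-$n$ bound of Lemma \ref{lem:distance_Z_Ztilde} gives
\begin{equation}
\E \abs{Z^\epl - \widetilde Z^\epl}^p \le \liminf_{n\to\infty} \E^{\varphi^\epl} \abs{Z_{n\Delta}^\epl - \widetilde Z_n^\epl}^p \le C^p \bigl( \Delta^{1/2} + \min\{ \epl, \Delta\epl^{-1} \} \bigr)^p,
\end{equation}
which upon taking $p$-th roots is exactly the statement. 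Taking $p$-th roots is legal because the right-hand side is independent of $n$.

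The main obstacle is the joint convergence in law, since the corollary as stated does not specify the coupling of $Z^\epl$ and $\widetilde Z^\epl$. My interpretation is that both are driven by the same Brownian path, so that the coupling arises as the marginal on the $(Z,\widetilde Z)$ coordinates of the invariant measure of the extended process $(X_{n\Delta}^\epl, Z_{n\Delta}^\epl, \widetilde Z_n^\epl)$; if instead the coupling is arbitrary, the inequality should be read as an upper bound on the Wasserstein-$p$ distance, obtained by taking the infimum over couplings, and the argument via the two-sided stationary representation provides such a coupling. Apart from this mild interpretational point, the argument is essentially a routine transfer of a finite-time estimate to a stationary one via ergodicity and Fatou's lemma.
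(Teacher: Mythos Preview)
Your proposal is correct and follows exactly the paper's approach: pass to the limit $n\to\infty$ in Lemma~\ref{lem:distance_Z_Ztilde}, using that the constant there is independent of $n$ together with the ergodicity supplied by Lemma~\ref{lem:ergodicity_XZ_tilde}. The paper's own proof is a one-line statement of this idea, whereas you have additionally made explicit the joint coupling of $(Z^\epl,\widetilde Z^\epl)$ via the common driving process and the use of Fatou's lemma under weak convergence, which are precisely the details the paper leaves implicit.
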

\begin{proof}
The result follows directly from Lemma \ref{lem:distance_Z_Ztilde} by letting $n$ go to infinity, noting that the constant $C$ is independent of $n$ and employing ergodicity given by Lemma \ref{lem:ergodicity_XZ_tilde}.
\end{proof}

It directly follows that $\widetilde Z_n^\epl$ has bounded moments of all order and, in particular, we can prove Lemma \ref{lem:Ztilde_bounded_moments}.

\begin{proof}[Proof of Lemma \ref{lem:Ztilde_bounded_moments}]
Applying Jensen's inequality to the function $x\mapsto\abs{x}^p$, we have
\begin{equation}
\E^{\widetilde \rho^\epl} \abs{\widetilde Z^\epl}^p \le 2^{p-1} \E^{\rho^\epl} \abs{Z^\epl}^p + 2^{p-1} \E \abs{\widetilde Z^\epl - Z^\epl}^p,
\end{equation}
then bounding the two terms in the right-hand side respectively with \cite[Lemma C.1]{AGP20} and Corollary \ref{cor:asymptotic_distances_Z} gives the desired result.
\end{proof}

\subsection{Properties of eigenfunctions and eigenvalues of the generator}

Let us now consider the eigenvalue and the eigenfunctions of the generator of SDE \eqref{eq:SDE_H_a}.

\begin{lemma} \label{lem:regularity_eigen_a}
Let $\{(\lambda_j(a),\phi_j(\cdot;a))\}_{j=0}^\infty$ be the solutions of the eigenvalue problem \eqref{eq:eigen_problem_general}. Then $\phi_j(x;a)$ and $\lambda_j(a)$ are continuously differentiable with respect to $a$ for all $x\in\R$ and for all $j\in\N$. Moreover, $\phi_j(\cdot;a)$ and $\dot \phi_j(\cdot;a)$ belong to $\mathcal C^\infty(\R)$.
\end{lemma}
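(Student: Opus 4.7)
The plan is to separate regularity in $x$ from regularity in $a$. Smoothness in $x$ of each eigenfunction $\phi_j(\cdot;a)$ is the easier step: equation \eqref{eq:eigen_problem_equation} is a linear second-order ODE whose coefficients are $\mathcal{C}^\infty$ since $V \in \mathcal{C}^\infty(\R;\R^N)$ by Assumption \ref{ass:dissipative_setting}. A standard elliptic bootstrap shows that any eigenfunction in $L^2(\varphi_a)$ automatically lies in $\mathcal{C}^\infty(\R)$, and the same argument will later be reused for $\dot{\phi}_j(\cdot;a)$.

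For differentiability in $a$, the difficulty is that the Hilbert space $L^2(\varphi_a)$ itself moves with $a$. I would remove this dependence via the unitary transformation $U_a \colon L^2(\varphi_a) \to L^2(\R)$, $(U_a f)(x) = f(x) \sqrt{\varphi_a(x)}$, which conjugates $-\mathcal{L}_a$ into a Schrödinger operator
\begin{equation*}
H_a = U_a(-\mathcal{L}_a) U_a^{-1} = -\Sigma \partial_x^2 + W_a(x)
\end{equation*}
on the fixed Hilbert space $L^2(\R)$. The potential $W_a$ is explicitly computable from $V$ and grows at infinity thanks to the quadratic lower bound in Assumption \ref{ass:dissipative_setting}, so $H_a$ has compact resolvent and discrete spectrum coinciding with $\{\lambda_j(a)\}_{j=0}^\infty$. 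Since the dependence on $a$ enters only linearly through $\alpha \cdot V$, the family $W_a$ is real-analytic in $a \in \mathcal{A}$, making $\{H_a\}$ a self-adjoint analytic family of type (B) in Kato's sense with common form core $\mathcal{C}_c^\infty(\R)$.

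Given that the eigenvalues $0 = \lambda_0(a) < \lambda_1(a) < \cdots$ are all \emph{simple} by their strict ordering, Kato's analytic perturbation theorem for isolated simple eigenvalues then yields real-analytic dependence of both $\lambda_j(a)$ and the normalized eigenfunctions $\psi_j(\cdot;a) \in L^2(\R)$ on $a$. Transferring this back through $U_a^{-1}$ and using the smoothness of $\sqrt{\varphi_a}$ in $a$ gives $L^2(\varphi_a)$-differentiability of $\phi_j(\cdot;a)$, together with the formal differentiation identity
\begin{equation*}
\Sigma \dot{\phi}_j''(x;a) - a \cdot V'(x) \dot{\phi}_j'(x;a) + \lambda_j(a) \dot{\phi}_j(x;a) = V'(x) \phi_j'(x;a) - \dot{\lambda}_j(a) \phi_j(x;a),
\end{equation*}
obtained by differentiating \eqref{eq:eigen_problem_equation} in $a$. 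Since the right-hand side is $\mathcal{C}^\infty$ in $x$, applying the elliptic bootstrap from the first step to this inhomogeneous linear ODE yields $\dot{\phi}_j(\cdot;a) \in \mathcal{C}^\infty(\R)$; combined with the $L^2$-differentiability, this also promotes the derivative to a pointwise continuous derivative for each fixed $x \in \R$.

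The main obstacle is the rigorous verification that $\{H_a\}$ is a type (B) analytic family: $W_a$ is unbounded and its operator domain may in principle depend on $a$, so one cannot simply invoke type (A). The way around this is to work with the sesquilinear form $\mathfrak{h}_a[u] = \Sigma \norm{u'}_{L^2(\R)}^2 + \int_\R W_a(x) \abs{u(x)}^2 \dd x$ on a common form domain: polynomial bounds on $W_a - W_{a^*}$ combined with the coercivity of $\mathfrak{h}_{a^*}$ inherited from the dissipativity hypothesis in Assumption \ref{ass:dissipative_setting} give the required relative form-boundedness, placing us in the type (B) framework where the perturbation results for simple isolated eigenvalues apply without modification.
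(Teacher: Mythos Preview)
Your argument is correct and considerably more explicit than the paper's. The paper disposes of the lemma in three lines: it cites Sections 2 and 6 of \cite{Sch74} for the $C^1$ dependence of $\lambda_j(a)$ and $\phi_j(\cdot;a)$ on $a$, invokes \cite[Section 4.7]{Pav14} to justify that the spectrum is discrete and simple, and appeals to elliptic regularity for $\phi_j(\cdot;a),\dot\phi_j(\cdot;a)\in\mathcal C^\infty(\R)$. You instead carry out the perturbation argument by hand: the ground-state transform to a Schr\"odinger operator on the fixed space $L^2(\R)$, verification that the resulting family is holomorphic of type (B), and Kato's theorem for simple isolated eigenvalues. This is exactly the machinery underlying the cited reference, so the two proofs are morally the same; yours has the advantage of being self-contained and of making visible where each hypothesis (simplicity of eigenvalues, dissipativity for coercivity of the form) is actually used. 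One small inaccuracy: the dependence of $W_a$ on $a$ is quadratic rather than linear, since $W_a$ contains the term $(a\cdot V')^2/(4\Sigma)$, but this is still polynomial and hence analytic in $a$, so the type (B) conclusion is unaffected.
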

\begin{proof}
The first result follows from Section 2 and Section 6 in \cite{Sch74}. Let us remark that the fact that the spectrum is discrete and non-degenerate is guaranteed by \cite[Section 4.7]{Pav14}. Finally, the second result in the statement is a direct consequence of the elliptic regularity theory.
\end{proof}

\subsection{Approximation of the Itô formula} \label{app:approximation_formula}

In this section we prove Lemma \ref{lem:expansion_fXD}, which is an approximation of the Itô's lemma applied to the stochastic process $X_t^\epl$. Let us introduce the process $S_t^\epl$ defined by the following SDE with initial condition $S_0^\epl = X_0^\epl$
\begin{equation} \label{eq:def_S}
\d S^\epl_t = - \alpha V'(X_t^\epl) (1 + \Phi'(Y_t^\epl)) \d t + \sqrt{2\sigma} (1+\Phi'(Y_t^\epl)) \d W_t,
\end{equation}
where $Y_t^\epl = X_t^\epl/\epl$ and $\Phi$ is the cell function which solves equation \eqref{eq:cell_problem}, and notice that
\begin{equation}
S^\epl_\Delta = X^\epl_0 - \alpha \int_0^\Delta V'(X_t^\epl) (1 + \Phi'(Y_t^\epl)) \dd t + \sqrt{2\sigma} \int_0^\Delta (1+\Phi'(Y_t^\epl)) \dd W_t.
\end{equation}
Therefore, due to equation (5.7) in \cite{PaS07} we have
\begin{equation}
\abs{X_\Delta^\epl - S_\Delta^\epl} = \epl \abs{\Phi(Y^\epl_\Delta) - \Phi(Y^\epl_0)},
\end{equation}
and, since $\Phi$ is bounded by \cite[Lemma 5.5]{PaS07}, we get for a constant $C>0$ independent of $\Delta$ and $\epl$
\begin{equation} \label{eq:approx_S}
\abs{X_\Delta^\epl - S_\Delta^\epl} \le C \epl.
\end{equation}
Before showing the main formula, we need two preliminary estimates which will be employed later in the analysis. The proofs of Lemma \ref{lem:approx_f1} and Lemma \ref{lem:approx_f2} are inspired by the proof of Proposition 5.8 in \cite{PaS07}.

\begin{lemma} \label{lem:approx_f1}
Let $f \colon \R \to \R$ be a continuously differentiable function such that $f,f'$ are polynomially bounded. Then
\begin{equation} \label{eq:approx_f1_statement}
\int_0^\Delta \alpha \cdot V'(X_t^\epl) f(X_t^\epl) (1 + \Phi'(Y_t^\epl)) \dd t = A \cdot V'(X_0^\epl) f(X_0^\epl) \Delta + R_1(\epl,\Delta),
\end{equation}
where the remainder satisfies for all $p\ge1$ and for a constant $C>0$ independent of $\Delta$ and $\epl$
\begin{equation}
\left( \E^{\varphi^\epl} \abs{R_1(\epl,\Delta)}^p \right)^{1/p} \le C(\epl^2 + \Delta^{1/2}\epl + \Delta^{3/2}).
\end{equation}
\end{lemma}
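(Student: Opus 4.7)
The plan is to separate the slow-scale Taylor expansion around $X_0^\epl$ from the fast-scale averaging of $1+\Phi'$ against the invariant measure $\mu$, so that each produces an error with controlled $L^p$ norm and their sum matches the claimed exponents. First I would freeze the slow variable: using the auxiliary process $S_t^\epl$ defined in \eqref{eq:def_S}, the bound \eqref{eq:approx_S}, boundedness of $1+\Phi'$ from \cite[Lemma 5.5]{PaS07}, and a Hölder/BDG estimate on the drift and martingale parts of $S_t^\epl - X_0^\epl$, one obtains the moment estimate $(\E^{\varphi^\epl}\abs{X_t^\epl - X_0^\epl}^p)^{1/p} \le C(\epl + \Delta^{1/2})$ uniformly for $t \in [0,\Delta]$. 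Combined with polynomial boundedness of $(V' f)'$ and the mean-value theorem, this yields
\begin{equation}
\int_0^\Delta \alpha \cdot V'(X_t^\epl) f(X_t^\epl) (1+\Phi'(Y_t^\epl)) \dd t = \alpha V'(X_0^\epl) f(X_0^\epl) \int_0^\Delta (1+\Phi'(Y_t^\epl)) \dd t + \widetilde R(\epl,\Delta),
\end{equation}
with $(\E^{\varphi^\epl}\abs{\widetilde R(\epl,\Delta)}^p)^{1/p} \le C(\epl\Delta + \Delta^{3/2})$, the $\Delta^{3/2}$ coming from integrating an $L^p$-size $\Delta^{1/2}$ fluctuation over $[0,\Delta]$.

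Next I would average the fast variable via a cell-problem/Itô trick. Because $\int_0^L (1+\Phi'(y) - K)\mu(\d y) = 0$ by \eqref{eq:K_H}, the Fredholm alternative on the torus produces a smooth, $L$-periodic, $\mu$-centered solution $\chi$ of the Poisson equation $\sigma \chi''(y) - p'(y)\chi'(y) = 1 + \Phi'(y) - K$. Applying Itô's formula to $y \mapsto \epl^2 \chi(y/\epl)$ along $X_t^\epl$, the $\epl^{-2}$ scaling cancels the fast generator, yielding
\begin{equation}
\int_0^\Delta (1+\Phi'(Y_t^\epl)) \dd t = K\Delta + \epl^2[\chi(Y_\Delta^\epl) - \chi(Y_0^\epl)] + \epl\alpha\int_0^\Delta \chi'(Y_t^\epl) V'(X_t^\epl)\dd t - \epl\sqrt{2\sigma}\int_0^\Delta \chi'(Y_t^\epl)\dd W_t.
\end{equation}
Multiplying by $\alpha V'(X_0^\epl) f(X_0^\epl)$ and using $A = K\alpha$ isolates the leading term of \eqref{eq:approx_f1_statement}. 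The three correction terms are then $L^p$-bounded by $O(\epl^2)$, $O(\epl\Delta)$, and $O(\epl\Delta^{1/2})$ respectively, using boundedness of $\chi,\chi'$ on the torus, polynomial growth of $V'$, bounded moments of $X_t^\epl$ from \cite[Corollary 5.4]{PaS07}, and the Burkholder--Davis--Gundy inequality for the stochastic integral. Combining with $\widetilde R$ and absorbing $\epl\Delta \le \epl\Delta^{1/2}$ (using $0<\Delta<1$) produces precisely $C(\epl^2 + \epl\Delta^{1/2} + \Delta^{3/2})$.

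The main obstacle is not conceptual but bookkeeping: one must verify solvability and boundedness of $\chi$ on the torus (which is standard once the centering $K = \int(1+\Phi')\dd\mu$ is checked, and guaranteed by smoothness of $p$ together with elliptic regularity), and then track every $L^p$ remainder so that no exponent is lost. In particular, avoiding spurious $\epl\Delta$ terms in the final bound depends crucially on the assumption $\Delta<1$, which lets one absorb $\epl\Delta$ into the $\epl\Delta^{1/2}$ contribution from the stochastic integral.
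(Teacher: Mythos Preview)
Your proposal is correct and follows essentially the same two-step decomposition as the paper: freeze the slow variable $V'(X_t^\epl)f(X_t^\epl)$ at $t=0$ using the increment bound $(\E^{\varphi^\epl}\abs{X_t^\epl-X_0^\epl}^p)^{1/p}\le C(\epl+\Delta^{1/2})$, then average the fast factor $\alpha(1+\Phi'(Y_t^\epl))$ to $A$. The only presentational difference is that the paper delegates the averaging step to \cite[Lemma~5.6]{PaS07} as a black box, whereas you unpack that lemma's mechanism explicitly via the periodic corrector $\chi$ and It\^o's formula; the resulting $O(\epl^2)$, $O(\epl\Delta)$, $O(\epl\Delta^{1/2})$ contributions match exactly.
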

\begin{proof}
To obtain the remainder $R_1(\epl,\Delta)$ we decompose suitably the difference between the left-hand side and the right-hand side of \eqref{eq:approx_f1_statement}. Applying Jensen's inequality to the function $z\mapsto\abs{z}^p$ we have
\begin{equation} \label{eq:decomposition_I123}
\begin{aligned}
\E^{\varphi^\epl} \abs{R_1(\epl,\Delta)}^p &\le 3^{p-1} \E^{\varphi^\epl} \abs{\int_0^\Delta \alpha \cdot \left( V'(X_t^\epl) - V'(X_0^\epl) \right) f(X_t^\epl) (1 + \Phi'(Y_t^\epl)) \dd t}^p \\
&\quad + 3^{p-1} \E^{\varphi^\epl} \abs{\alpha \cdot V'(X_0^\epl) \int_0^\Delta \left( f(X_t^\epl) - f(X_0^\epl) \right) (1 + \Phi'(Y_t^\epl)) \dd t}^p \\
&\quad + 3^{p-1} \E^{\varphi^\epl} \abs{f(X_0^\epl) V'(X_0^\epl) \cdot \int_0^\Delta \left( \alpha (1 + \Phi'(Y_t^\epl)) - A \right) \dd t}^p \\
&\eqdef I_1(\epl,\Delta) + I_2(\epl,\Delta) + I_3(\epl,\Delta).
\end{aligned}
\end{equation}
Letting $C>0$ be a constant independent of $\epl$ and $\delta$, we now bound the three terms separately. First, applying Hölder inequality and since $V'$ is Lipschitz, $\Phi'$ is bounded, $f$ is polynomially bounded and $X_t^\epl$ has bounded moments of any order by \cite[Corollary 5.4]{PaS07}, we have
\begin{equation}
I_1(\epl,\Delta) \le C \Delta^{p-1} \int_0^\Delta \E^{\varphi^\epl} \abs{X_t^\epl - X_0^\epl}^p \dd t,
\end{equation}
then applying \cite[Lemma 6.1]{PaS07} we obtain
\begin{equation} \label{eq:bound_I1}
I_1(\epl,\Delta) \le C \left( \Delta^{2p} + \Delta^{3p/2} + \epl^p\Delta^p \right).
\end{equation}
We then rewrite $I_2(\epl,\Delta)$ employing the mean value theorem
\begin{equation}
I_2(\epl,\Delta) = 3^{p-1} \E^{\varphi^\epl} \abs{\alpha \cdot V'(X_0^\epl) \int_0^\Delta f'(\widetilde X^\epl_t) (X_t^\epl - X_0^\epl) (1 + \Phi'(Y_t^\epl)) \dd t}^p,
\end{equation}
where $\widetilde X_t^\epl$ assumes values between $X_0^\epl$ and $X_t^\epl$, and we repeat the same reasoning as for $I_1(\epl,\Delta)$ to get
\begin{equation} \label{eq:bound_I2}
I_2(\epl,\Delta) \le C \left( \Delta^{2p} + \Delta^{3p/2} + \epl^p\Delta^p \right).
\end{equation}
We now consider the function
\begin{equation}
H(y) \eqdef \alpha (1+\Phi'(y)) - A,
\end{equation}
which by definition of $A$ and due to \eqref{eq:K_H} has zero mean with respect to $\mu$ defined in \eqref{eq:def_mu}. Therefore, since $f$ and $V'$ are polynomially bounded and $X_0^\epl$ has bounded moments of any order by \cite[Corollary 5.4]{PaS07}, applying \cite[Lemma 5.6]{PaS07} we obtain
\begin{equation} \label{eq:bound_I3}
I_3(\epl,\Delta) \le C \left( \epl^{2p} + \epl^p\Delta^p + \epl^p\Delta^{p/2} \right).
\end{equation}
Finally, for $\epl$ and $\Delta$ sufficiently small, the desired result follows from \eqref{eq:decomposition_I123} and from estimates \eqref{eq:bound_I1}, \eqref{eq:bound_I2} and \eqref{eq:bound_I3}.
\end{proof}

\begin{lemma} \label{lem:approx_f2}
Let $f \colon \R \to \R$ be a continuously differentiable function such that $f,f'$ are polynomially bounded. Then
\begin{equation} \label{eq:approx_f2_statement}
\int_0^\Delta \sigma f(X_t^\epl) (1 + \Phi'(Y_t^\epl))^2 \dd t = \Sigma f(X_0^\epl) \Delta + R_2(\epl,\Delta),
\end{equation}
where the remainder satisfies for all $p\ge1$ and for a constant $C>0$ independent of $\Delta$ and $\epl$
\begin{equation}
\left( \E^{\varphi^\epl} \abs{R_2(\epl,\Delta)}^p \right)^{1/p} \le C(\epl^2 + \Delta^{1/2}\epl + \Delta^{3/2}).
\end{equation}
\end{lemma}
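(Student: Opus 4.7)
The plan is to mimic the proof of Lemma \ref{lem:approx_f1} almost line for line, with one key algebraic observation that replaces the use of the identity $A = K\alpha$ by its diffusive analogue $\Sigma = K\sigma$. Using the formula for $K$ in \eqref{eq:K_H}, namely $K = \int_0^L (1+\Phi'(y))^2\,\mu(\d y)$, the function
\begin{equation}
H_2(y) \defeq \sigma(1+\Phi'(y))^2 - \Sigma
\end{equation}
has zero mean with respect to the fast-scale invariant measure $\mu$ defined in \eqref{eq:def_mu}, which is the crucial structural property that will allow us to apply \cite[Lemma 5.6]{PaS07}.

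First I would decompose the remainder by adding and subtracting $f(X_0^\epl)$ inside the integral:
\begin{equation}
R_2(\epl,\Delta) = \underbrace{\int_0^\Delta \sigma\left(f(X_t^\epl) - f(X_0^\epl)\right)(1+\Phi'(Y_t^\epl))^2 \dd t}_{J_1(\epl,\Delta)} + \underbrace{f(X_0^\epl)\int_0^\Delta H_2(Y_t^\epl) \dd t}_{J_2(\epl,\Delta)}.
\end{equation}
By Jensen's inequality applied to $z\mapsto|z|^p$, it suffices to bound $\E^{\varphi^\epl}|J_1|^p$ and $\E^{\varphi^\epl}|J_2|^p$ separately.

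For $J_1$, I would invoke the mean value theorem to write $f(X_t^\epl)-f(X_0^\epl) = f'(\widetilde X_t^\epl)(X_t^\epl - X_0^\epl)$ with $\widetilde X_t^\epl$ between $X_0^\epl$ and $X_t^\epl$, then apply Hölder's inequality over $[0,\Delta]$. Using boundedness of $\Phi'$ (from \cite[Lemma 5.5]{PaS07}), polynomial growth of $f'$, bounded moments of $X_t^\epl$ of all orders from \cite[Corollary 5.4]{PaS07}, and the short-time increment estimate $(\E^{\varphi^\epl}|X_t^\epl-X_0^\epl|^p)^{1/p}\le C(\Delta + \Delta^{1/2} + \epl)$ from \cite[Lemma 6.1]{PaS07}, I obtain $\E^{\varphi^\epl}|J_1|^p \le C(\Delta^{2p}+\Delta^{3p/2}+\epl^p\Delta^p)$, exactly as for $I_1,I_2$ in the proof of Lemma \ref{lem:approx_f1}.

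For $J_2$, since $f$ is polynomially bounded and $X_0^\epl$ has bounded moments of all orders, I can pull $f(X_0^\epl)$ out via Cauchy--Schwarz and estimate the integral $\int_0^\Delta H_2(Y_t^\epl)\dd t$ using the zero-mean property of $H_2$ with respect to $\mu$. Applying \cite[Lemma 5.6]{PaS07} yields $\E^{\varphi^\epl}|J_2|^p \le C(\epl^{2p}+\epl^p\Delta^p+\epl^p\Delta^{p/2})$, mirroring the treatment of $I_3$ in the previous lemma. Combining these two estimates gives the claimed bound $C(\epl^2 + \Delta^{1/2}\epl+\Delta^{3/2})$ after taking $p$-th roots. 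I do not anticipate any genuine obstacle: the only nontrivial point is recognizing that the quadratic cell average $\int_0^L (1+\Phi'(y))^2\mu(\d y)$ coincides with $K$ via the identity in \eqref{eq:K_H}, so the corrector cancellation works just as in the drift case.
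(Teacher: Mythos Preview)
Your proposal is correct and follows essentially the same approach as the paper: the paper decomposes $R_2$ into the same two pieces $J_1$ and $J_2$, bounds $J_1$ via the mean value theorem, H\"older, boundedness of $\Phi'$ and \cite[Lemma 6.1]{PaS07}, and bounds $J_2$ by recognizing that $H_2(y)=\sigma(1+\Phi'(y))^2-\Sigma$ has zero $\mu$-mean from \eqref{eq:K_H} and then applying \cite[Lemma 5.6]{PaS07}. The resulting bounds and their combination are identical to yours.
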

\begin{proof}
To obtain the remainder $R_2(\epl,\Delta)$ we decompose suitably the difference between the left-hand side and the right-hand side of \eqref{eq:approx_f2_statement}. Applying Jensen's inequality to the function $z\mapsto\abs{z}^p$ we have
\begin{equation} \label{eq:decomposition_I12}
\begin{aligned}
\E^{\varphi^\epl} \abs{R_2(\epl,\Delta)}^p &\le 2^{p-1} \E^{\varphi^\epl} \abs{\int_0^\Delta \sigma \left( f(X_t^\epl) - f(X_0^\epl) \right) (1+\Phi'(Y_t^\epl)^2) \dd t}^p \\
&\quad + 2^{p-1} \E^{\varphi^\epl} \abs{f(X_0^\epl) \int_0^\Delta \left( \sigma (1+\Phi'(Y_t^\epl))^2 - \Sigma \right) \dd t}^p \\
&\eqdef I_1(\epl,\Delta) + I_2(\epl,\Delta).
\end{aligned}
\end{equation}
Letting $C>0$ be a constant independent of $\epl$ and $\Delta$, we now bound the two terms separately. First, we rewrite $I_1(\epl,\Delta)$ employing the mean value theorem
\begin{equation}
I_1(\epl,\Delta) = 2^{p-1} \E^{\varphi^\epl} \abs{\int_0^\Delta \sigma f'(\widetilde X_t^\epl) (X_t^\epl - X_0^\epl) (1+\Phi'(Y_t^\epl))^2 \dd t}^p,
\end{equation}
where $\widetilde X_t^\epl$ assumes values between $X_0^\epl$ and $X_t^\epl$, then applying Hölder inequality and since $\Phi'$ is bounded, $f'$ is polynomially bounded and $X_t^\epl$ has bounded moments of any order by \cite[Corollary 5.4]{PaS07}, we have
\begin{equation}
I_1(\epl,\Delta) \le C \Delta^{p-1} \int_0^\Delta \E^{\varphi^\epl} \abs{X_t^\epl - X_0^\epl}^p \dd t,
\end{equation}
and applying \cite[Lemma 6.1]{PaS07} we obtain
\begin{equation} \label{eq:bound_I1_f2}
I_1(\epl,\Delta) \le C \left( \Delta^{2p} + \Delta^{3p/2} + \epl^p\Delta^p \right).
\end{equation}
We now consider the function
\begin{equation}
H(y) \eqdef \sigma (1+\Phi'(y))^2 - \Sigma,
\end{equation}
which by definition of $\Sigma$ and due to \eqref{eq:K_H} has zero mean with respect to $\mu$ defined in \eqref{eq:def_mu}. Therefore, since $f$ is polynomially bounded and $X_0^\epl$ has bounded moments of any order by \cite[Corollary 5.4]{PaS07}, applying \cite[Lemma 5.6]{PaS07} we obtain
\begin{equation} \label{eq:bound_I2_f2}
I_2(\epl,\Delta) \le C \left( \epl^{2p} + \epl^p\Delta^p + \epl^p\Delta^{p/2} \right).
\end{equation}
Finally, for $\epl$ and $\Delta$ sufficiently small, the desired result follows from \eqref{eq:decomposition_I12} and from estimates \eqref{eq:bound_I1_f2} and \eqref{eq:bound_I2_f2}.
\end{proof}

We can now prove the main formula, which is employed repeatedly in the proof of the asymptotic unbiasedness of the drift estimators.

\begin{proof}[Proof of Lemma \ref{lem:expansion_fXD}]
Applying Itô's lemma to the process $S_t^\epl$ defined in \eqref{eq:def_S} with the function $f$ we have
\begin{equation}
\begin{aligned}
f(S_\Delta^\epl) &= f(X_0^\epl) - \int_0^\Delta \alpha \cdot V'(X_t^\epl) f'(X_t^\epl) (1 + \Phi'(Y_t^\epl)) \dd t + \int_0^\Delta \sigma f''(X_t^\epl) (1 + \Phi'(Y_t^\epl))^2 \dd t \\
&\quad + \sqrt{2\sigma} \int_0^\Delta f'(X_t^\epl) (1+\Phi'(Y_t^\epl)) \dd W_t,
\end{aligned}
\end{equation}
and due to Lemma \ref{lem:approx_f1} and Lemma \ref{lem:approx_f2} we obtain
\begin{equation}
\begin{aligned}
f(S_\Delta^\epl) &= f(X_0^\epl) - A \cdot V'(X_0^\epl) f'(X_0^\epl) \Delta + \Sigma f''(X_0^\epl) \Delta + \sqrt{2\sigma} \int_0^\Delta f'(X_t^\epl) (1+\Phi'(Y_t^\epl)) \dd W_t \\
&\quad - R_1(\epl,\Delta) + R_2(\epl,\Delta).
\end{aligned}
\end{equation}
Then we write
\begin{equation}
f(X_\Delta^\epl) = f(S_\Delta^\epl) + \left[ f(X_\Delta^\epl) - f(S_\Delta^\epl) \right] \eqdef f(S_\Delta^\epl) + R_3(\epl,\Delta),
\end{equation}
and, in order to conclude, it only remains to bound the expectation of $R_3(\epl,\Delta)$. Applying the mean value theorem and the Cauchy-Schwarz inequality and due to \eqref{eq:approx_S}, the hypotheses on $f$ and the fact that $X_t^\epl$ has bounded moments of any order by \cite[Corollary 5.4]{PaS07}, we obtain
\begin{equation}
\E^{\varphi^\epl} \abs{R_3(\epl,\Delta)}^p \le \left( \E^{\varphi^\epl} \abs{f'(\widetilde X)}^{2p} \right)^{1/2} \left( \E^{\varphi^\epl} \abs{X_\Delta^\epl - S_\Delta^\epl}^{2p} \right)^{1/2} \le C \epl^p,
\end{equation}
where $\widetilde X$ takes values between $X_\Delta^\epl$ and $S_\Delta^\epl$, and which together with the estimates for $R_1$ and $R_2$ implies the desired result.
\end{proof}
	
\section{Implementation details} \label{app:implementation}

In this section we present the main techniques that we employed in the implementation of the proposed method. The most important steps in the algorithm are the computation of the eigenvalues and eigenfunctions of the eigenvalue problem \eqref{eq:eigen_problem_equation}
\begin{equation}
\Sigma \phi_j''(x;a) - a \cdot V'(x) \phi_j'(x;a) + \lambda_j(a) \phi_j(x;a) = 0,
\end{equation}
and the solution of the non-linear system \eqref{eq:system2solve} or \eqref{eq:system2solve_filter} with filtered data. Let us first focus on the eigenvalue problem. We note that the domain of the eigenfunctions is the whole real line $\R$ and need to be truncated for numerical computations. We first consider the variational formulation of equation \eqref{eq:eigen_problem_equation}, i.e., we multiply it by $v \varphi_a$, where $v$ is a test function and $\varphi_a$ is the invariant distribution defined in \eqref{eq:phia_def}, and integrating by parts we obtain for all $j \in \N$ the following eigenvalue problem
\begin{equation}
\Sigma \int_\R \phi_j'(x;a) v'(x) \varphi_a(x) \dd x = \lambda_j(a) \int_\R \phi_j(x;a) v(x) \varphi_a(x) \dd x.
\end{equation}
Since $\varphi_a$ decays to zero exponentially fast, for all $\delta > 0$ there exists $r>0$ such that
\begin{equation}
\abs{\varphi_a(x)} < \delta \qquad \text{for all } x \not\in [-r,r].
\end{equation}
Hence, letting $R > 0$ we assume that $\varphi_a(\pm R) \simeq 0$ and we solve the truncated problem
\begin{equation} \label{eq:eigen_variational_truncated}
\Sigma \int_{-R}^{+R} \phi_j'(x;a) v'(x) \varphi_a(x) \dd x = \lambda_j(a) \int_{-R}^{+R} \phi_j(x;a) v(x) \varphi_a(x) \dd x.
\end{equation}
Notice that $R$ must be chosen big enough and such that
\begin{equation}
R \ge \max_{n=0,\dots,N} \max \left\{ \abs{\widetilde X_n^\epl}, \abs{\widetilde Z_n^\epl} \right\} \eqdef \bar R,
\end{equation}
and we take $R = \max \{ \bar R + 0.1, 1.7 \}$. Moreover, in order to have a unique solution for the eigenvector $\phi_j(\cdot;a)$ we impose the additional conditions
\begin{equation} \label{eq:condition_eigenfunction}
\phi_j(R;a) > 0 \qquad \text{and} \qquad \int_{-R}^{+R} \phi_j(x;a)^2 \varphi_a(x) \dd x = 1.
\end{equation}
We then introduce a partition $\mathcal T_h$ of $[-R,R]$ in $N_h$ subintervals $K_i = [x_{i-1}, x_i]$ with 
\begin{equation}
-R = x_0 < x_1 < \dots < x_{N_h} < x_{N_h} = +R,
\end{equation}
and $h = 2R/N_h$, and we construct the discrete space
\begin{equation}
X_h^1 = \left\{ v_h \in C^0([-R,+R]) \colon v_h |_{K_i} \in \mathbb P^1 \; \forall \; K_i \in \mathcal T_h \right\},
\end{equation}
which is constituted by continuous piecewise linear functions. Note that the discretization parameter $h$ is chosen to be $h = 0.1$ or $h = 0.05$. We pick the characteristic Lagrangian basis $\{ \psi_k \}_{k=0}^{N_h}$ of $X_h^1$ characterized by the following property
\begin{equation}
\psi_k(x_i) = \delta_{ik} \qquad \text{for all } i,k = 0, \dots, N_h,
\end{equation}
where $\delta_{ik}$ is the Kronecker delta. We want to find $\phi_j(\cdot;a) \in X_h^1$ such that equation \eqref{eq:eigen_variational_truncated} holds true for all $v \in X_h^1$. Therefore, in equation \eqref{eq:eigen_variational_truncated} we substitute
\begin{equation}
\phi_j(x;a) = \sum_{k=0}^{N_h} \theta_j^{(k)}(a) \psi_k(x) \qquad \text{and} \qquad v(x) = \psi_i(x) \text{ for all } i = 0, \dots, N_h,
\end{equation}
and we obtain the discrete formulation
\begin{equation} \label{eq:gep}
S \Theta_j(a) = \lambda_j(a) M \Theta_j(a),
\end{equation}
where $\Theta_j(a) \in \R^{N_h+1}$ is such that $(\Theta_j(a))_k = \theta_j^{(k-1)}(a)$ and the components of the matrices $S,M \in \R^{N_h+1 \times N_h+1}$ are given by
\begin{equation}
S_{ik} = \Sigma \int_{-R}^{+R} \psi_{i-1}'(x) \psi_{k-1}'(x) \varphi_a(x) \dd x, \qquad \text{and} \qquad M_{ik} = \int_{-R}^{+R} \psi_{i-1}(x) \psi_{k-1}(x) \varphi_a(x) \dd x,
\end{equation}
where the integrals are approximated through the composite Simpson's quadrature rule. Equation \eqref{eq:gep} is a generalized eigenvalue problem which can be solved in \textsc{Matlab} using the function \texttt{eigs} or in \textsc{Phyton} using the function \texttt{scipy.sparse.linalg.eigsh}. Then we normalize $\Theta_j(a)$ or change its sign in order to impose the conditions \eqref{eq:condition_eigenfunction}, which can be rewritten as
\begin{equation}
\theta_j^{(N_h)}(a) > 0 \qquad \text{and} \qquad \Theta_j(a)^\top M \Theta_j(a) = 1.
\end{equation}
Once we compute $\lambda_j(a)$ and $\Theta_j(a)$ we have an approximation of the eigenvalues and eigenfunctions and we can construct the function $\widehat G^\epl_{N,J}(a)$ in \eqref{eq:score_function_NOfilter} or $\widetilde G^\epl_{N,J}(a)$ in \eqref{eq:score_function_YESfilter} with filtered data. Hence, it only remains to solve systems \eqref{eq:system2solve} or \eqref{eq:system2solve_filter}, i.e.,
\begin{equation}
\widehat G^\epl_{N,J}(a) = 0, \qquad \text{or} \qquad \widetilde G^\epl_{N,J}(a) = 0.
\end{equation}
To solve these equations we can follow two approaches:
\begin{itemize}
\item find the zero of $\widehat G^\epl_{N,J}(a)$ or $\widetilde G^\epl_{N,J}(a)$;
\item find the minimum of $\norm{\widehat G^\epl_{N,J}(a)}$ or $\norm{\widetilde G^\epl_{N,J}(a)}$.
\end{itemize}
In practice, for the first approach the function \texttt{fsolve} in \textsc{Matlab} or the function \texttt{scipy.optimize.fsolve} in \textsc{Python} can be used, while for the second one the function \texttt{fmincon} in \textsc{Matlab} or the function \texttt{scipy.optimize.minimize} in \textsc{Python} can be used. Finally, note that the functions implemented in \textsc{Matlab} or \textsc{Python} have been employed with their default parameters.

\section{Multidimensional diffusion processes} \label{app:multidimensional}

In this section we present how our methodology for estimating the drift coefficient of the homogenized equation can be extended to the case of multidimensional multiscale diffusion processes in $\R^d$. In the $d$-dimensional case the multiscale SDE \eqref{eq:SDE_MS} reads
\begin{equation}
\d X_t^\epl = - \sum_{m=1}^{M} \alpha_m \nabla V_m(X_t^\epl) \dd t - \frac1\epl \nabla p\left(\frac{X_t^\epl}\epl\right) \dd t + \sqrt{2\sigma} \dd W_t,
\end{equation}
where $W_t$ is a standard $d$-dimensional Brownian motion. The theory of homogenization (see e.g. \cite[Chapter 3]{BLP78} or \cite[Chapter 18]{PaS08}) then guarantees the existence of the homogenized SDE
\begin{equation}
\d X_t^0 = - \sum_{m=1}^{M} A_m \nabla V_m(X_t^0) \dd t + \sqrt{2\Sigma} \dd W_t,
\end{equation}
where $A_m,\Sigma \in \R^{d \times d}$ are given by $A_m = \alpha_m K$ and $\Sigma = \sigma K$. The matrix $K \in \R^{d \times d}$ is defined by
\begin{equation}
K = \int_{[0,L]^d} (I + \nabla \Phi(y)) (I + \nabla \Phi(y))^T \mu(dy) = \int_{[0,L]^d} (I + \nabla \Phi(y)) \mu(dy),
\end{equation}
where 
\begin{equation}
\mu(\d y) = \frac{1}{C_\sigma} e^{-p(y)/\sigma} \dd y \quad\text{with} \quad C_\sigma = \int_{[0,L]^d} e^{-p(y)/\sigma} \dd y,
\end{equation}
and where the function $\Phi \colon [0,L]^d \to \R^d$ is the unique solution with zero-mean with respect to the measure $\mu$ of the cell problem in $[0,L]^d$
\begin{equation}
- \nabla \Phi \nabla p + \sigma \Delta \Phi = \nabla p,
\end{equation}
endowed with periodic boundary conditions. Using the tensor notation, we can then define the drift coefficient $A \in \R^{M \times d \times d}$, which collects together the $M$ matrices $A_m$ for $m = 1, \dots, M$. Our goal is now to estimate the tensor $A$ and thus we need to define the score functions. First, the $d$-dimensional eigenvalue problem for $j = 1, \dots, J$ corresponding to \eqref{eq:eigen_problem_equation} is
\begin{equation}
\Sigma : \nabla^2 \phi_j(x;a) - \left( \sum_{m=1}^M a_m \nabla V_m(x) \right) \cdot \nabla \phi_j(x;a) + \lambda_j(a) \phi_j(x;a) = 0,
\end{equation}
where $:$ denotes the Frobenius inner product, $\nabla^2$ the Hessian matrix and the parameter $a \in \R^{M \times d \times d}$ collects together the $M$ matrices $a_m$ for $m = 1, \dots, M$. Then, in order to define the martingale estimating functions $g_j$ for $j = 1, \dots, J$, we take a collection $\{ \beta_j \}_{j=1}^J$ of functions $\beta_j(\cdot; a) \colon \R^d \to \R^{M \times d \times d}$ and we use equation \eqref{eq:def_g}. Finally, we construct the score functions $\widehat G^\epl_{N,J}$ and $\widetilde G^\epl_{N,J}$ in the same way as we did in the one dimensional case, i.e., employing equations \eqref{eq:score_function_NOfilter} and \eqref{eq:score_function_YESfilter}. We remark that the filtered data are obtained as in equation \eqref{eq:Z_tilde} by applying the filter component-wise. We can now compute the estimators $\widehat A^\epl_{N,J}$ and $\widetilde A^\epl_{N,J}$ by solving the nonlinear systems
\begin{equation}
\widehat G^\epl_{N,J}(a) = 0 \qquad \text{and} \qquad \widetilde G^\epl_{N,J}(a) = 0,
\end{equation}
which have dimension $Md^2$. From a theoretical point of view, slight modifications of the proofs allow to conclude that analogous results to the main theorems hold true, i.e., that the estimators are asymptotically unbiased in the limit of infinite observations and when the multiscale parameter vanishes. However, the problem becomes more complex and computationally expensive from a numerical viewpoint, in particular when the dimension $d$ is large. In fact, the final nonlinear system, which has to be solved, has dimension $Md^2$ instead of $M$ and, most importantly, it is required to solve the eigenvalue problem for the generator of a diffusion process in $d$ dimensions.

\end{appendices}

\bibliographystyle{siamnodash}
\bibliography{biblio}

\begin{thebibliography}{10}

\bibitem{AbD20}
{\sc A.~Abdulle and A.~Di~Blasio}, {\em A {B}ayesian numerical homogenization
  method for elliptic multiscale inverse problems}, SIAM/ASA J. Uncertain.
  Quantif., 8 (2020), pp.~414--450.

\bibitem{AGP20}
{\sc A.~Abdulle, G.~Garegnani, G.~A. Pavliotis, A.~M. Stuart, and A.~Zanoni},
  {\em Drift estimation of multiscale diffusions based on filtered data}.
\newblock Found. Comput. Math., 2021.

\bibitem{AGZ20}
{\sc A.~Abdulle, G.~Garegnani, and A.~Zanoni}, {\em Ensemble {K}alman filter
  for multiscale inverse problems}, Multiscale Model. Simul., 18 (2020),
  pp.~1565--1594.

\bibitem{AiJ14}
{\sc Y.~A\"it-Sahalia and J.~Jacod}, {\em High-frequency financial
  econometrics}, Princeton University Press, 2014.

\bibitem{AMZ06}
{\sc Y.~A{\"i}t-Sahalia, P.~A. Mykland, and L.~Zhang}, {\em How often to sample
  a continuous-time process in the presence of market microstructure noise}, in
  Stochastic Finance, A.~N. Shiryaev, M.~R. Grossinho, P.~E. Oliveira, and
  M.~L. Esqu{\'i}vel, eds., Boston, MA, 2006, Springer US, pp.~3--72.

\bibitem{BGL14}
{\sc D.~Bakry, I.~Gentil, and M.~Ledoux}, {\em Analysis and geometry of
  {M}arkov diffusion operators}, vol.~348 of Grundlehren der Mathematischen
  Wissenschaften [Fundamental Principles of Mathematical Sciences], Springer,
  Cham, 2014.

\bibitem{BaS94}
{\sc O.~E. Barndorff-Nielsen and M.~Sørensen}, {\em A review of some aspects
  of asymptotic likelihood theory for stochastic processes}, International
  Statistical Review / Revue Internationale de Statistique, 62 (1994),
  pp.~133--165.

\bibitem{BLP78}
{\sc A.~Bensoussan, J.-L. Lions, and G.~Papanicolaou}, {\em Asymptotic analysis
  for periodic structures}, AMS Chelsea Publishing, Providence, RI, 2011.
\newblock Corrected reprint of the 1978 original [MR0503330].

\bibitem{BiS95}
{\sc B.~M. Bibby and M.~S\o~rensen}, {\em Martingale estimation functions for
  discretely observed diffusion processes}, Bernoulli, 1 (1995), pp.~17--39.

\bibitem{Bra86}
{\sc A.~Brandt}, {\em The stochastic equation {$Y_{n+1}=A_nY_n+B_n$} with
  stationary coefficients}, Adv. in Appl. Probab., 18 (1986), pp.~211--220.

\bibitem{CrV06}
{\sc D.~Crommelin and E.~Vanden-Eijnden}, {\em Fitting timeseries by
  continuous-time {M}arkov chains: a quadratic programming approach}, J.
  Comput. Phys., 217 (2006), pp.~782--805.

\bibitem{CrV06a}
{\sc D.~Crommelin and E.~Vanden-Eijnden}, {\em Reconstruction of diffusions
  using spectral data from timeseries}, Commun. Math. Sci., 4 (2006),
  pp.~651--668.

\bibitem{CrV11}
{\sc D.~Crommelin and E.~Vanden-Eijnden}, {\em Diffusion estimation from
  multiscale data by operator eigenpairs}, Multiscale Model. Simul., 9 (2011),
  pp.~1588--1623.

\bibitem{DGP21}
{\sc M.~G. Delgadino, R.~S. Gvalani, and G.~A. Pavliotis}, {\em On the
  diffusive-mean field limit for weakly interacting diffusions exhibiting phase
  transitions}, Arch. Ration. Mech. Anal., 241 (2021), pp.~91--148.

\bibitem{DuP16}
{\sc A.~B. Duncan and G.~A. Pavliotis}, {\em Brownian motion in an n-scale
  periodic potential}.
\newblock Preprint arXiv:1605.05854, 2016.

\bibitem{Flo89}
{\sc D.~Florens-Zmirou}, {\em Approximate discrete-time schemes for statistics
  of diffusion processes}, Statistics, 20 (1989), pp.~547--557.

\bibitem{GoH87}
{\sc V.~P. Godambe and C.~C. Heyde}, {\em Quasi-likelihood and optimal
  estimation}, Internat. Statist. Rev., 55 (1987), pp.~231--244.

\bibitem{GoP18}
{\sc S.~N. Gomes and G.~A. Pavliotis}, {\em Mean field limits for interacting
  diffusions in a two-scale potential}, J. Nonlinear Sci., 28 (2018),
  pp.~905--941.

\bibitem{HST98}
{\sc L.~P. Hansen, J.~A. Scheinkman, and N.~Touzi}, {\em Spectral methods for
  identifying scalar diffusions}, J. Econometrics, 86 (1998), pp.~1--32.

\bibitem{HeG89}
{\sc C.~C. Heyde and R.~Gay}, {\em On asymptotic quasi-likelihood estimation},
  Stochastic Process. Appl., 31 (1989), pp.~223--236.

\bibitem{HuR03}
{\sc L.~Hurwicz and M.~K. Richter}, {\em Implicit functions and diffeomorphisms
  without {$C^1$}}, in Advances in mathematical economics. {V}ol. 5, vol.~5 of
  Adv. Math. Econ., Springer, Tokyo, 2003, pp.~65--96.

\bibitem{KKP15}
{\sc S.~Kalliadasis, S.~Krumscheid, and G.~A. Pavliotis}, {\em A new framework
  for extracting coarse-grained models from time series with multiscale
  structure}, J. Comput. Phys., 296 (2015), pp.~314--328.

\bibitem{KeS99}
{\sc M.~Kessler and M.~S{\o}rensen}, {\em Estimating equations based on
  eigenfunctions for a discretely observed diffusion process}, Bernoulli, 5
  (1999), pp.~299--314.

\bibitem{KPK13}
{\sc S.~Krumscheid, G.~A. Pavliotis, and S.~Kalliadasis}, {\em Semiparametric
  drift and diffusion estimation for multiscale diffusions}, Multiscale Model.
  Simul., 11 (2013), pp.~442--473.

\bibitem{KPP15}
{\sc S.~Krumscheid, M.~Pradas, G.~A. Pavliotis, and S.~Kalliadasis}, {\em
  Data-driven coarse graining in action: Modeling and prediction of complex
  systems}, Phys. Rev. E, 92 (2015), p.~042139.

\bibitem{Kry09}
{\sc N.~V. Krylov}, {\em Controlled diffusion processes}, vol.~14 of Stochastic
  Modelling and Applied Probability, Springer-Verlag, Berlin, 2009.
\newblock Translated from the 1977 Russian original by A. B. Aries, Reprint of
  the 1980 edition.

\bibitem{MPP10}
{\sc Y.~S. M\={\i}shura, S.~V. Posashkova, and S.~V. Posashkov}, {\em
  Continuous dependence of solutions of stochastic differential equations
  controlled by standard and fractional {B}rownian motions on a parameter},
  Teor. \u{I}mov\={\i}r. Mat. Stat.,  (2010), pp.~92--105.

\bibitem{PPS09}
{\sc A.~Papavasiliou, G.~A. Pavliotis, and A.~M. Stuart}, {\em Maximum
  likelihood drift estimation for multiscale diffusions}, Stochastic Process.
  Appl., 119 (2009), pp.~3173--3210.

\bibitem{Pav14}
{\sc G.~A. Pavliotis}, {\em Stochastic processes and applications}, vol.~60 of
  Texts in Applied Mathematics, Springer, New York, 2014.
\newblock Diffusion processes, the Fokker-Planck and Langevin equations.

\bibitem{PaS07}
{\sc G.~A. Pavliotis and A.~M. Stuart}, {\em Parameter estimation for
  multiscale diffusions}, J. Stat. Phys., 127 (2007), pp.~741--781.

\bibitem{PaS08}
{\sc G.~A. Pavliotis and A.~M. Stuart}, {\em Multiscale methods}, vol.~53 of
  Texts in Applied Mathematics, Springer, New York, 2008.
\newblock Averaging and homogenization.

\bibitem{PSV09}
{\sc Y.~Pokern, A.~M. Stuart, and E.~Vanden-Eijnden}, {\em Remarks on drift
  estimation for diffusion processes}, Multiscale Model. Simul., 8 (2009),
  pp.~69--95.

\bibitem{RPK19}
{\sc M.~Raissi, P.~Perdikaris, and G.~E. Karniadakis}, {\em Physics-informed
  neural networks: a deep learning framework for solving forward and inverse
  problems involving nonlinear partial differential equations}, J. Comput.
  Phys., 378 (2019), pp.~686--707.

\bibitem{Sch74}
{\sc T.~B. Scheffler}, {\em Analyticity of the eigenvalues and eigenfunctions
  of an ordinary differential operator with respect to a parameter}, Proc. Roy.
  Soc. London Ser. A, 336 (1974), pp.~475--486.

\bibitem{SpC13}
{\sc K.~Spiliopoulos and A.~Chronopoulou}, {\em Maximum likelihood estimation
  for small noise multiscale diffusions}, Stat. Inference Stoch. Process., 16
  (2013), pp.~237--266.

\bibitem{YMK21}
{\sc L.~Yang, X.~Meng, and G.~E. Karniadakis}, {\em B-{PINN}s: {B}ayesian
  physics-informed neural networks for forward and inverse {PDE} problems with
  noisy data}, J. Comput. Phys., 425 (2021), pp.~109913, 23.

\bibitem{ZLG19}
{\sc D.~Zhang, L.~Lu, L.~Guo, and G.~E. Karniadakis}, {\em Quantifying total
  uncertainty in physics-informed neural networks for solving forward and
  inverse stochastic problems}, J. Comput. Phys., 397 (2019), pp.~108850, 19.

\bibitem{ZMP05}
{\sc L.~Zhang, P.~A. Mykland, and Y.~A\"{\i}t-Sahalia}, {\em A tale of two time
  scales: determining integrated volatility with noisy high-frequency data}, J.
  Amer. Statist. Assoc., 100 (2005), pp.~1394--1411.

\end{thebibliography}

\end{document}